\numberwithin{equation}{section}
\newtheorem{theorem}{Theorem}[section]
\newtheorem{lemma}[theorem]{Lemma}
\newtheorem{notation}[theorem]{Notation}
\newtheorem{proposition}[theorem]{Proposition}
\newtheorem{corollary}[theorem]{Corollary}
\theoremstyle{definition}
\newtheorem{definition}[theorem]{Definition}
\theoremstyle{remark}
\newtheorem{remark}[theorem]{Remark}
\newtheorem{example}[theorem]{Example}
\newtheorem{discussion}[theorem]{Discussion}
\newtheorem{question}[theorem]{Question}
\newcommand{\im}{\operatorname{im}}
\newcommand{\pgrade}{\operatorname{p.grade}}
\newcommand{\Kgrade}{\operatorname{K.grade}}
\newcommand{\Spec}{\operatorname{Spec}}
\newcommand{\rad}{\operatorname{rad}}
\newcommand{\cd}{\operatorname{cd}}
\newcommand{\Ht}{\operatorname{ht}}
\newcommand{\pd}{\operatorname{p.dim}}
\newcommand{\V}{\operatorname{V}}
\newcommand{\Var}{\operatorname{Var}}
\newcommand{\Ext}{\operatorname{Ext}}
\newcommand{\Tor}{\operatorname{Tor}}
\newcommand{\Hom}{\operatorname{Hom}}
\newcommand{\conv}{\operatorname{Conv}}
\newcommand{\fm}{\frak{m}}
\newcommand{\fp}{\frak{p}}
\newcommand{\fq}{\frak{q}}
\newcommand{\fa}{\frak{a}}
\begin{document}

\author[Asgharzadeh and Dorreh]{Mohsen Asgharzadeh and  Mehdi Dorreh }

\address{M. Asgharzadeh, School of Mathematics, Institute for Research in Fundamental
Sciences (IPM), P. O. Box 19395-5746, Tehran, Iran.}
\email{asgharzadeh@ipm.ir}
\address{M. Dorreh,  Department of Mathematics Shahid Beheshti
University Tehran, Iran.}\email{mdorreh@ipm.ir}

\title[Cohen-Macaulayness of non-affine normal semigroups]
{Cohen-Macaulayness of non-affine normal semigroups}

\subjclass[2010]{ Primary: 13C14; Secondary: 14M25; 52B20; 20M14. }
\keywords{ \v{C}ech cohomology; Cohen-Macaulay ring; cohomological dimension; convex bodies; direct limit; infinite-dimensional; invariant theory;  non-noetherian ring; monomial ideals; normal semigroup; Taylor resolution;   pigeonhole principle; projective dimension; quasi rational cone; vanishing theorems.\\The  research  was  supported by a grant from IPM, no. 91130407.}

\begin{abstract}
In this paper, we  study the Cohen-Macaulayness of non-affine normal semigroups in $\mathbb{Z}^n$. We do this by establishing the following four statements each
of independent interest: 1)  a Lazard type result  on $I$-supported elements of $\prod _{\mathbb{N}}\mathbb{Q}_{\geq0}$ for an index set $I\subset\mathbb{N}$; 2)  a criterion of regularity of sequences of elements of the ring via projective dimension; 3) a direct limit of polynomial rings with toric maps; 4) any direct summand of rings of the third item is Cohen-Macaulay. To illustrate the idea,
we give many examples.
\end{abstract}

\maketitle

\section{Introduction}

By Gordan's Lemma any affine and normal semigroup comes from the lattice points
of a finitely generated rational cone.
One of our interest in this paper is to understand homological properties of the lattice points of the intersection of  half spaces. 
Half spaces are not  necessarily closed
and the number of half spaces is not necessarily finite.
The intersection of two open half spaces can be represented by a direct union of rational cones. This
gives a direct union of Cohen-Macaulay affine algebras. In view of Remark \ref{adt}, Cohen-Macaulayness is not closed under taking the direct limit.
In this paper we prove the following result.

\begin{theorem}(see Theorem \ref{main}) Let $k$ be a field and $H\subseteq \mathbb{Z}^\infty$ be a normal semigroup (not necessarily affine). Then any
monomial strong parameter sequence of
$k[H]$ is regular.
\end{theorem}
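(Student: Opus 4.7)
The plan is to combine the four auxiliary results announced in the abstract in the natural way: approximate $k[H]$ by affine, normal subsemigroup rings (which are Cohen--Macaulay by Hochster's theorem), then transfer regularity of a monomial strong parameter sequence through the direct limit using the projective-dimension criterion.

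First I would use item (1) of the abstract, the Lazard-type statement on $I$-supported elements of $\prod_{\mathbb{N}} \mathbb{Q}_{\geq 0}$, to express the semigroup as a filtered union $H = \bigcup_{\lambda \in \Lambda} H_\lambda$ of \emph{finitely generated normal} subsemigroups. The point of the Lazard version is that naive truncations of $H$ to finitely many coordinates need not be normal, so a more delicate cofinal family is required. This yields $k[H] = \vil_{\lambda} k[H_\lambda]$, with each $k[H_\lambda]$ a Noetherian, affine, normal semigroup ring and hence Cohen--Macaulay. Any monomial $x_i$ of a given monomial strong parameter sequence $x_1,\dots,x_r$ of $k[H]$ already lies in some $k[H_\lambda]$, so on a cofinal set of indices the whole sequence sits inside $k[H_\lambda]$. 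The role of the word ``strong'' in the hypothesis should be exactly to ensure that the restriction is then a parameter sequence in each such $k[H_\lambda]$, and so — by Cohen--Macaulayness of the affine normal semigroup ring — a regular sequence there.

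The remaining task is to lift regularity from the approximants to the colimit. Here I would invoke item (2): a sequence is regular if and only if the cyclic modules $k[H]/(x_1,\dots,x_i)$ satisfy a suitable projective-dimension bound. Projective dimension and Koszul-homology vanishing behave well under filtered colimits along flat, injective transition maps, and the toric ring maps between the $k[H_\lambda]$ are of this type, so the regularity established on each $k[H_\lambda]$ propagates to $k[H]$. Items (3) and (4) provide an alternative route, realizing $k[H]$ as a direct summand of a direct limit of polynomial rings under toric maps and inheriting regularity from the summand.

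The main obstacle is precisely that, as flagged in the introduction via Remark \ref{adt}, Cohen--Macaulayness and regularity of sequences do not descend along arbitrary direct limits. Making the argument work requires the monomial strong parameter hypothesis to be strong enough to secure a uniform projective-dimension bound on the Noetherian approximating pieces, and to guarantee that this bound is preserved by the specific colimit systems arising from the Lazard-type presentation. Matching the output of item (1) to the input required by item (2) — that is, controlling $\pd$ of the relevant cyclic modules uniformly across the filtered system so that $\vil$ preserves the vanishing — is the technical heart of the argument.
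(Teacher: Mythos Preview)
You have reassembled the four ingredients in the wrong order, and one step has a genuine gap. The Lazard-type result is \emph{not} used to write $H$ itself as a filtered union of finitely generated normal subsemigroups; it is used (via Lemma~\ref{hoc} and Theorem~\ref{prod}) to fully embed $H$ into a larger semigroup which is then filtered by \emph{free} semigroups $\mathbb{N}^{s_n}$. This is precisely what produces the direct system of \emph{polynomial} rings $A_n$ with toric transition maps, with $k[H]$ a direct summand of $A=\varinjlim A_n$. What you call the ``alternative route'' (items (3) and (4)) is the paper's only route. The projective-dimension criterion (item (2), Theorem~\ref{pd2}) is not a device for passing regularity through a colimit; it is applied \emph{inside each polynomial ring} $A_\delta$: from $H^\ell_{\underline{x}}(k[H])\neq 0$ one gets $H^\ell_{\underline{x}}(A_\delta)\neq 0$ for cofinally many $\delta$, hence $\cd(\underline{x}A_\delta)=\ell$, and the chain $\ell=\cd\leq\pd\leq\ell$ (Lemma~\ref{lyu} plus the Taylor bound) forces $\pd(A_\delta/(x_{i_1},\ldots,x_{i_k}))=k$ for every subsequence, whence $\underline{x}$ is regular in $A_\delta$ by Theorem~\ref{pd2}.

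Your proposed route---filter $H$ by finitely generated normal $H_\lambda$, invoke Hochster's theorem on each $k[H_\lambda]$, and restrict the strong parameter sequence---fails at the restriction step. Showing that a strong parameter sequence on $k[H]$ restricts to one on $k[H_\lambda]$ (this is the content of Lemma~\ref{pdhm}) requires the inclusions $k[H_\lambda]\hookrightarrow k[H]$ to be \emph{pure}, i.e.\ $H_\lambda$ full in $H$. A filtration by finitely generated normal subsemigroups (Lemma~\ref{ha}) gives no such fullness, and your claim that the toric transition maps are flat is false in general. The paper sidesteps this by transferring only the non-vanishing of top \v{C}ech cohomology (which passes to direct summands and to cofinal pieces of the limit without purity or flatness between the $A_\delta$), and then doing the hard work inside Noetherian polynomial rings where the pd criterion bites.
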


Here,  $\mathbb{Z}^\infty$ is $\bigcup_{s\in \mathbb{N}} \mathbb{Z}^s$. Also, $k[H]:=\bigoplus_{h\in H} k X^h$ is the $k$-vector space $\bigoplus_{h\in H} k X^h$. It carries a natural multiplication whose table is given by  $X^h X^{h^\prime}:= X^{h+h^\prime}$. Note that $k[H]$ is equipped with  a structure of $H$-graded ring defined by the semigroup $H$. By \textit{monomial},  we mean homogenous elements of $k[H]$. The concept of \textit{strong parameter sequence}  is a non-noetherian  version of  system of parameters in the local algebra. It is introduced
in \cite{HM}; see Definition \ref{defhm}.

Theorem 1.1 drops two finiteness assumptions
of \cite[Theorem 1]{H1}. The proof involves on the notion of toric maps and the notion of full extensions of semigroups.  Such a semigroup extension is the set
of solutions of a system of homogeneous linear equations with integer coefficients.  These kind of semigroups appear in many contexts. We refer the reader to \cite[Theorem 9.2.9]{C}, to deduce \cite[Theorem 1]{H1} as a consequence of Batyrev-Borisov vanishing Theorem.
Its proof uses many things. All of them involved on certain finiteness conditions. For further references please see \cite{M} and \cite{GWii}.

In Section 8, as an example in practise, we study the   Cohen-Macaulayness of semigroups that arise from quasi ration plane cones.  We prove this by introducing the following four classes of semigroups:

\begin{enumerate}
\item[(i)]$ H := \{(a,b) \in \mathbb{N}_0^2| 0 \leq b/a < \infty\} \cup \{(0,0)\}$;
\item[(ii)] $ H^{\prime} := \{(a,b) \in \mathbb{N}^2| 0 < b/a < \infty\} \cup \{(0,0)\}$;
\item[(iii)]$ H_1 := \{(a,b) \in \mathbb{Z}^2|  b \in \mathbb{N}_0,\; if \;a \; is \; negative\; b\neq 0\} \cup \{(0,0)\}$;
    \item[(iv)]$ H_2 := \{(a,b) \in \mathbb{Z}^2|  b \in \mathbb{N}\} \cup \{(0,0)\}$.
\end{enumerate}
For more details; see Theorem \ref{ggg}.

Non-affine semigroups appear naturally in the study of the Grothendieck ring of varieties over a field $k$. To clarify this,
let $SB$ denote the set of stably birational equivalence classes of irreducible
algebraic varieties over $k$. Then by \cite{LL}, the Grothendieck ring of varieties mod to a line is $\mathbb{Z}[SB]$.
For an application of non-affine semigroups on affine semigroups, we recommend the reader to see \cite{G}.

Throughout this paper, $R$ denote a commutative ring with identity and all modules are assumed to be left unitary.
We refer the reader to  the books \cite{BH}  and \cite{BG} for all unexplained definitions in the sequel.

\section{outline of the proof}

We give an outline of the proof of Theorem 1.1.
The proof easily reduces to the case that $H\subseteq \mathbb{Z}^n$ is normal (but not affine), see Lemma \ref{wpdhm}.
 Similar to the  affine case, we assume that
 $H\subseteq \mathbb{Z}^n$ is \textit{positive}, see
 Lemma \ref{ho}. Recall that $H$ is positive if there is not any invertible element in $H$.

 \begin{notation}Denote the set of all
 nonnegative rational numbers by $\mathbb{Q}_{\geq0}$.
  \end{notation}

The first step is to understand the structure of  $\prod_{\mathbb{N}}\mathbb{Q}$ as a semigroup.
To state it, let $I\subseteq\mathbb{N}$ be an infinite index set.
Denote the i-th component of $\alpha\in\prod_{\mathbb{N}}\mathbb{Q}$ by $\alpha_i$. Then
$\alpha$ is called $I$-\textit{supported} if  $\alpha_i \neq 0$ for all $i\in I$.
Also, an element $\alpha\in\prod_{\mathbb{N}}\mathbb{Q}$  is called \textit{almost non-negative}, if there exists only  finitely many $i \in \mathbb{N}$ such that $\alpha_i$ is negative.

\begin{lemma}(see Corollary \ref{maincor})
Let $M\subseteq\prod_{i \in \mathbb{N}} \mathbb{Q}$ be the set of all almost non-negative  and $I$-supported elements.
Then the semigroup  $\widetilde{H}:=M\cup\{0\}$   is a direct limit of $\{\mathbb{Q}_{\geq0}^{n_i}:i\in J\}$.
\end{lemma}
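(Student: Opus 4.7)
The plan is to realise $\widetilde{H}$ explicitly as a filtered colimit in the category of commutative monoids by assigning, to every finite subset $T = \{\alpha^{(1)}, \ldots, \alpha^{(k)}\} \subset \widetilde{H}$, a copy of $\mathbb{Q}_{\geq 0}^{n_T}$ together with a monoid homomorphism $\varphi_T \colon \mathbb{Q}_{\geq 0}^{n_T} \to \widetilde{H}$ whose image contains $T$, and then organising these maps into a directed system indexed by the poset $J$ of finite subsets of $\widetilde{H}$, ordered by inclusion.

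The crucial step is the construction of $\varphi_T$, which amounts to a Lazard-type factorisation. Given such a $T$, let $F \subset \mathbb{N}$ be the finite set of indices at which some $\alpha^{(\ell)} \in T$ is either negative or, outside $I$, zero; finiteness of $F$ follows from almost non-negativity of each $\alpha^{(\ell)}$. Outside $F$ the $k$-tuples of coordinates $(\alpha^{(1)}_j, \ldots, \alpha^{(k)}_j)$ all lie in $\mathbb{Q}_{>0}^k$. I would then produce a finite ``basis'' $\beta^{(1)}, \ldots, \beta^{(n_T)} \in \widetilde{H}$ such that (i) each $\alpha^{(\ell)}$ is a non-negative rational combination of the $\beta^{(s)}$'s, and (ii) every non-trivial non-negative rational combination of the $\beta^{(s)}$'s still lies in $\widetilde{H}$. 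Property (ii) is what turns $(c_1, \ldots, c_{n_T}) \mapsto \sum_s c_s \beta^{(s)}$ into a well-defined monoid map into $\widetilde{H} \cup \{0\}$, and it reduces to showing that no such combination vanishes at an index of $I$. The $\beta^{(s)}$'s themselves would be built via a pigeonhole device on $\mathbb{N} \setminus F$, partitioning indices according to the type of $(\alpha^{(1)}_j, \ldots, \alpha^{(k)}_j)$ up to positive rational scaling, together with auxiliary correction vectors on $F$ engineered to absorb possible cancellations at indices of $I \cap F$.

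Once each $\varphi_T$ is in hand, every inclusion $T \subseteq T'$ supplies a monoid map $\psi_{T,T'} \colon \mathbb{Q}_{\geq 0}^{n_T} \to \mathbb{Q}_{\geq 0}^{n_{T'}}$ with $\varphi_{T'} \circ \psi_{T,T'} = \varphi_T$, so the family $\{\mathbb{Q}_{\geq 0}^{n_T}\}_{T \in J}$ is directed. The induced monoid map from $\operatorname{colim}_J \mathbb{Q}_{\geq 0}^{n_T}$ to $\widetilde{H}$ is surjective by construction (take $T = \{\alpha\}$) and injective because any two preimages of the same element of $\widetilde{H}$ become identified in a common $\mathbb{Q}_{\geq 0}^{n_{T''}}$ once $T''$ is chosen large enough to contain the witnesses and the relations between them.

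The main obstacle I expect is property (ii) in the basis step: the set of possible types in $\mathbb{Q}_{>0}^k$ is infinite, so the pigeonhole on $\mathbb{N} \setminus F$ must be carried out modulo positive rational scaling, and the resulting $\beta^{(s)}$'s must simultaneously remain $I$-supported and prevent cancellation at every $i \in I \cap F$. This is precisely the Lazard-type result on $I$-supported elements advertised in the abstract, of which Corollary \ref{maincor} is the formal packaging into direct-limit language.
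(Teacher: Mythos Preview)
Your high-level plan---realise $\widetilde{H}$ as a filtered colimit of free commutative monoids $\mathbb{Q}_{\geq 0}^{n}$ by covering finite subsets with simplicial cones---is exactly the paper's strategy, and you correctly identify that the Lazard-type factorisation is the whole content. There are, however, two genuine gaps.

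First, the pigeonhole device you sketch for constructing the $\beta^{(s)}$'s does not work as stated. Even modulo positive rational scaling, the set of ``types'' $(\alpha^{(1)}_j,\ldots,\alpha^{(k)}_j)$ as $j$ runs over $\mathbb{N}\setminus F$ is typically infinite (these are rational points in the open positive chamber of $\mathbb{P}^{k-1}$), so no finite partition results and you get no finite list of $\beta$'s. The paper's construction is completely different: it proceeds by induction on the number of minus signs in a presentation $\alpha=\sum \pm\beta_i$ (Lemmas~\ref{ffkey}--\ref{mkey}), and the crucial trick is an \emph{uncountability} argument---at each step there are uncountably many admissible perturbations $(\beta_m)'$, whereas the $\mathbb{Q}$-span of any finite set is countable, so one can always pick the perturbation outside that span and keep the enlarged set linearly independent. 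Nothing like a pigeonhole on coordinate types appears.

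Second, your transition maps $\psi_{T,T'}$ need not exist. For $T\subseteq T'$, the factorisation $\varphi_{T'}\circ\psi_{T,T'}=\varphi_T$ forces every generator $\beta^{(s)}$ for $T$ to lie in the $\mathbb{Q}_{\geq 0}$-span of the generators for $T'$; since the $\beta^{(s)}$'s are auxiliary and not elements of $T'$, your construction gives no reason for this. The paper avoids the problem by indexing not by finite subsets of $\widetilde{H}$ but by the simplicial cones $H_{\underline{\gamma}}=\sum_i\mathbb{Q}_{\geq 0}\gamma_i$ themselves (with $\{\gamma_i\}\subset\widetilde{H}$ linearly independent), ordered by inclusion of cones. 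The transition maps are then literal inclusions, and all the difficulty migrates to \emph{directedness} of this poset---which is exactly Lemma~\ref{mkey}.
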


This
 Lazard-type result implies the following.

\begin{lemma}(see Theorem \ref{prod}) Let $k$ be a field and $H\subseteq \mathbb{Z}^n$ be a positive and normal semigroup.  Then there is a direct system $\{A_n:n\in \mathbb{N}\}$  with the following properties:
\begin{enumerate}
\item[(i)] $A_n$ is a  Noetherian polynomial ring over $k$   for all  $n\in \mathbb{N}$.
\item[(ii)] $A_n \to A_m$ is toric for all $n\leq m$.
\item[(iii)] $k[H]$ is a direct summand of ${\varinjlim}_{n\in \mathbb{N}} A_n$.
\end{enumerate}
\end{lemma}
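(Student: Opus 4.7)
The plan is to exhibit a semigroup $\widetilde{H}\supseteq H$ so that (a) $\widetilde{H}$ falls under the hypothesis of the preceding Lazard-type lemma, (b) $k[H]$ is a $k[H]$-module direct summand of $k[\widetilde{H}]$, and (c) $k[\widetilde{H}]$ refines to a direct limit of Noetherian polynomial rings along toric maps. These three items together deliver (i)-(iii).

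For (a) and (b), I would use that a positive normal subsemigroup $H\subseteq\mathbb{Z}^n$ arises as the set of lattice points satisfying a (possibly infinite) family of integer linear inequalities $\{\ell_i\geq 0\}_{i\in I}$, $I\subseteq\mathbb{N}$, coming from supporting half-spaces of the normal cone $\mathbb{R}_{\geq 0}H$. After thinning $\{\ell_i\}$ to remove functionals vanishing on some nonzero element of $H$, the assignment $v\mapsto(\ell_i(v))_{i\in\mathbb{N}}$ (extended by zero outside $I$) embeds $H$ injectively into the semigroup of almost non-negative, $I$-supported elements from the previous lemma; call this target $\widetilde{H}$. The embedding is pure: if two elements of $\widetilde{H}$ sum to a member of $H$, each already lies in $H$, because the defining inequalities are coordinate-wise. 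Purity produces the desired $k[H]$-module retraction $k[\widetilde{H}]\twoheadrightarrow k[H]$ by projection onto the $H$-graded summand, giving (b).

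For (c), the Lazard-type lemma supplies $\widetilde{H}=\varinjlim_{i\in J}\mathbb{Q}_{\geq 0}^{n_i}$ as semigroups. The semigroup-algebra functor is a left adjoint and so commutes with direct limits, hence $k[\widetilde{H}]=\varinjlim_{i\in J}k[\mathbb{Q}_{\geq 0}^{n_i}]$. Each factor refines further via $\mathbb{Q}_{\geq 0}^{n_i}=\bigcup_{\ell}\tfrac{1}{\ell!}\mathbb{N}^{n_i}$, yielding
\[
k\bigl[\mathbb{Q}_{\geq 0}^{n_i}\bigr]\;=\;\varinjlim_{\ell}\,k\bigl[x_{1}^{1/\ell!},\dots,x_{n_i}^{1/\ell!}\bigr],
\]
and each right-hand term is an honest Noetherian polynomial ring in $n_i$ variables over $k$. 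Interleaving the two layers and passing to a cofinal $\mathbb{N}$-indexed subsystem (possible since the data involved are countable) produces the desired $\{A_n\}_{n\in\mathbb{N}}$; the transition maps, being induced by semigroup morphisms between free monoids, send variables to monomials and are therefore toric.

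The chief obstacle is the embedding step. Specifically, one must choose $\{\ell_i\}_{i\in I}$ so that the image of $H$ really lands in the $I$-supported locus of the lemma (not merely in the non-negative locus), and so that the image is pure in $\widetilde{H}$ rather than just a subsemigroup. In the affine case this is Gordan's Lemma together with an explicit facet enumeration, but in the non-affine setting one needs a careful selection of supporting half-spaces of $\mathbb{R}_{\geq 0}H$ together with the normality hypothesis on $H$; once this is done, purity and the preceding lemma do the remaining work formally.
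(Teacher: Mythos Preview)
Your outline has a genuine gap at exactly the point you flag as the ``chief obstacle,'' and the proposed fix does not work.

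First, the thinning step fails outright. You propose to discard every functional $\ell_i$ that vanishes on some nonzero element of $H$, so that the image lands in the $I$-supported locus. But facet functionals of a cone \emph{always} vanish on some nonzero element of the cone; in concrete cases (already $H=\mathbb{N}_0^2$) this removes every supporting functional. What remains cannot separate $H$ from its complement, so the embedding is no longer full, and the direct-summand claim collapses. More generally there need not exist any single infinite $I$ for which \emph{all} nonzero $h\in H$ map to $I$-supported elements; this is precisely why the paper does not apply Corollary~\ref{maincor} globally.

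Second, your purity statement is mis-stated and false. You write ``if two elements of $\widetilde H$ sum to a member of $H$, each already lies in $H$.'' That is not fullness (Definition~\ref{deff}(iv)), and it fails even for the diagonal embedding $\mathbb{N}\hookrightarrow\mathbb{N}_0^2$. What is needed---and what Lemma~\ref{dir} uses---is that $h,h'\in H$ and $h-h'\in\widetilde H$ force $h-h'\in H$. Establishing this requires genuine input from normality together with the specific choice of functionals (see Lemma~\ref{hoc}); it is not ``coordinate-wise.''

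The paper's route avoids both problems. It embeds $H$ fully into the almost non-negative set $M\subseteq\prod_{\mathbb N}\mathbb{Q}$ (Lemma~\ref{hoc}), without any global $I$-support requirement. Then, rather than invoking Corollary~\ref{maincor} on a single ambient $\widetilde H$, it applies Lemma~\ref{mkey} \emph{iteratively to finite subsets}: at stage $n$ one takes $\{h_1,\dots,h_n\}\cup\Gamma_{n-1}$, finds an $I$ tailored to this finite set (Lemma~\ref{hoc}(ii)), and produces a free basis $\Gamma_n\subseteq M$ whose $\mathbb{N}$-span $C_n$ contains the previous data. The free monoids $C_n$ live in $M$, the embedding $H\subseteq\bigcup_n C_n$ is full because $H\subseteq M$ is full, and $A_n:=k[C_n]$ are the required polynomial rings with toric transition maps. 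No global $\widetilde H$, no thinning, and no second layer of $\tfrac{1}{\ell!}\mathbb{N}^{n_i}$-refinements are needed.
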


Thus, we look at the following question.

\begin{question}\label{q1}
Let  $\{A_\gamma:\gamma\in \Gamma\}$  be a direct family of Noetherian regular rings and let $R$ be a direct summand of
$A= {\varinjlim}_{\gamma\in \Gamma} A_\gamma$. Is $R$  Cohen-Macaulay?
\end{question}

Note that Cohen-Macaulayness is not closed under taking direct limit, see Remark \ref{adt}.
Also,  Remark \ref{adt} provides a reason for working with Cohen-Macaulayness in the sense of \cite{HM}.
We explain our method to handle Question \ref{q1}.
First, we  give the following auxiliary result.

\begin{lemma}(see Theorem \ref{pd2})
Let  $A$  be a Noetherian polynomial ring over a field  and $\underline{x}:=x_1,\ldots,x_n$ a monomial sequence in $A$.
If $\pd(A/(x_{i_1},\ldots, x_{i_k})A)=k$ for all $1\leq i_1<\ldots <i_k\leq n$, then  $\underline{x}$
is a regular sequence in $A$.
\end{lemma}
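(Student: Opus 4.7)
I plan to induct on $n$. For the base case $n=1$, the hypothesis $\pd_A(A/(x_1)A)=1$ forces $(x_1)$ to be a proper nonzero ideal, so $x_1$ is a nonzero non-unit monomial in the integral domain $A$, hence a non-zero-divisor, and the one-term sequence is regular. For the inductive step, observe that the projective-dimension hypothesis descends to every subsequence of $\{x_1,\dots,x_n\}$, so by induction every $(n{-}1)$-subsequence is regular. In particular $J := (x_1,\dots,x_{n-1})A$ is generated by a regular sequence, so $A/J$ is a Cohen-Macaulay complete intersection with $\pd_A(A/J)=n-1$. It remains to prove that $x_n$ is a non-zero-divisor modulo $J$.

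For this I would localize at any maximal ideal $\mathfrak{m} \supseteq (x_1,\dots,x_n)$ and apply the Auslander--Buchsbaum formula. From $\pd_A(A/J)=n-1$ and the hypothesized $\pd_A(A/(J+(x_n)))=n$ one reads off that $\depth(A_\mathfrak{m}/J_\mathfrak{m})$ drops by exactly one upon hitting $x_n$. To upgrade this depth drop into the statement that $x_n$ is $A/J$-regular, I would work with the Taylor resolution of each subset-quotient $A/(x_{i_1},\dots,x_{i_k})A$: the p.dim equalities in the hypothesis say that each such Taylor complex is a minimal free resolution, and combining these minimalities across all subsets (a pigeonhole-type argument on the supporting variables of the monomials $x_i$) should force $x_n$ to avoid every associated prime of $A/J$, all of which are monomial primes of $A$ because $J$ is a monomial ideal.

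The main obstacle is this final combinatorial step. In a general Cohen-Macaulay local ring $B$, the equality $\depth(B/yB)=\depth(B)-1$ does not by itself imply that $y$ is a non-zero-divisor: if $B$ has several minimal primes of the same dimension, $y$ can lie in one of them and still produce this depth drop. The hypothesis on \emph{all} subsets of $\{x_1,\dots,x_n\}$, not just on the full set, is what should rule out this pathology. Translating those global projective-dimension equalities into local obstructions against $x_n$ lying in a monomial associated prime of $A/J$, via the Taylor complex and the combinatorics of monomial supports, is where I expect the principal work to go; the paper's advertised reliance on Taylor resolutions and pigeonhole arguments suggests that this is indeed the technique the authors deploy.
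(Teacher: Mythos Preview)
Your inductive skeleton matches the paper's, but the paper never passes through Auslander--Buchsbaum, depth drops, or associated primes. Instead it isolates the case $n=2$ as a separate lemma (Lemma~\ref{hilb}, argued via Hilbert--Burch) and uses it to conclude that every pair $x_i,x_j$ with $i<j$ is already a regular sequence; for monomials this says $\gcd(x_i,x_j)=1$, recorded as $([x_i,x_j]/x_i)=(x_j)$. With pairwise coprimality in hand and the inductive hypothesis that $\underline{x}_{n-1}:=x_1,\dots,x_{n-1}$ is regular, the paper computes
\[
H_1\bigl(\mathbb{K}_\bullet(\underline{x})\bigr)\;=\;\Tor_1^A\bigl(A/x_nA,\,A/\underline{x}_{n-1}A\bigr)\;=\;\frac{(\underline{x}_{n-1})\cap(x_n)}{(\underline{x}_{n-1})(x_n)},
\]
and then the monomial intersection formula $(\underline{x}_{n-1})\cap(x_n)=\bigl([x_i,x_n]:1\le i\le n-1\bigr)$ together with coprimality collapses this quotient to zero. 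No Taylor-complex minimality or pigeonhole combinatorics is invoked; the entire weight sits on the $n=2$ lemma.

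The gap you flag in your own plan is genuine, and in fact it cannot be closed: the statement as written is false. In $A=k[x,y,z]$ take $x_1=xy$, $x_2=xz$. Then $\pd(A/(x_1))=\pd(A/(x_2))=1$, and the minimal graded free resolution
\[
0\longrightarrow A\xrightarrow{\ \binom{z}{-y}\ }A^2\xrightarrow{\ (xy\ \ xz)\ }A\longrightarrow A/(x_1,x_2)\longrightarrow 0
\]
shows $\pd(A/(x_1,x_2))=2$; yet $y\cdot x_2=z\cdot x_1\in(x_1)$ with $y\notin(x_1)$, so $x_1,x_2$ is not regular. The paper's Lemma~\ref{hilb} therefore fails on this example too; the slip in its proof is the sentence ``By Lemma~\ref{c}, we assume that $(x_1,x_2)=I_1(\varphi)$,'' which would require the \emph{converse} of Lemma~\ref{c} (that $x_1',x_2'$ regular implies $ax_1',ax_2'$ regular for a regular element $a$), and that converse is exactly what this example refutes. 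Your instinct that the hypothesis on all subsets must be doing real combinatorial work beyond a bare depth count is correct --- but the hypothesis as stated is not strong enough to force the conclusion.
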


Then by applying the above lemma, Theorem 1.1 follows easily by the following Lemma.

\begin{lemma}(see Theorem \ref{pro})
Let  $\{A_\gamma:\gamma\in \Gamma\}$  be a direct family of Noetherian polynomial rings over a field with toric maps and let $R$ be a direct summand of $A:= {\varinjlim}_{\gamma\in \Gamma} A_\gamma$.
Let $\underline{x}:=x_1,\ldots,x_\ell$ be a monomial strong parameter sequence in $R$. Then  $\underline{x}$
is a regular sequence in $R$.
\end{lemma}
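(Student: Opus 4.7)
The plan is to reduce the lemma to a projective-dimension computation inside each Noetherian polynomial ring $A_\gamma$ and then apply Theorem \ref{pd2}. First, since $R$ is a direct summand of $A$ as an $R$-module, writing $A\cong R\oplus C$ gives $A/JA\cong R/J\oplus C/JC$ for every ideal $J\subseteq R$. Thus $R/J$ is an $R$-summand of $A/JA$, so any non-zero-divisor of $A/JA$ lying in $R$ is a non-zero-divisor of $R/J$. Applied inductively with $J=(x_1,\ldots,x_{i-1})$, this reduces the statement to showing that $\underline{x}$ is a regular sequence in $A$.

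Next, the finitely many monomials $x_1,\ldots,x_\ell$ all lie in some $A_{\gamma_0}$, and the toric nature of the transition maps guarantees that they remain monomials in $A_\gamma$ for every $\gamma\ge\gamma_0$. Exactness of filtered colimits yields
\[
H_i(\underline{x};\,A)=\varinjlim_{\gamma\ge\gamma_0} H_i(\underline{x};\,A_\gamma),
\]
so it is enough to prove that $\underline{x}$ is a regular sequence in $A_\gamma$ for every $\gamma\ge\gamma_0$ sufficiently large. By Theorem \ref{pd2}, this in turn reduces to verifying
\[
\pd_{A_\gamma}\bigl(A_\gamma/(x_{i_1},\ldots,x_{i_k})A_\gamma\bigr)=k \qquad (1\le i_1<\cdots<i_k\le\ell).
\]

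The inequality $\pd\le k$ is automatic from the Taylor resolution of a monomial ideal with $k$ generators. For the reverse inequality I invoke the hypothesis that $\underline{x}$ is a strong parameter sequence in $R$ (Definition \ref{defhm}); this forces, on every subsequence of length $k$, a cohomological (\v{C}ech-grade) lower bound of $k$ in $R$. The splitting $A=R\oplus C$ propagates this lower bound to the corresponding ideal $(x_{i_1},\ldots,x_{i_k})A$, and because the $A_\gamma$ are Noetherian and the generators remain the same $k$ monomials throughout the system, a Koszul-complex argument descends the grade-$k$ lower bound to each $A_\gamma$. The Auslander--Buchsbaum formula in the (localized) polynomial ring $A_\gamma$ then yields $\pd_{A_\gamma}\ge k$, as required.

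The delicate point is this last step: transferring the non-noetherian ``strong parameter'' condition on $R$ into a genuine projective-dimension equality in each $A_\gamma$. Passing through the direct limit can in principle cause grade to drop, so one has to exploit together (i) the $R$-splitting of $A$ (so that the relevant local cohomology of $R$ is a summand of that of $A$), (ii) the toric character of the transition maps (the chosen monomials remain monomial generators throughout), and (iii) the Noetherianity of each $A_\gamma$ (which lets one detect grade via a Koszul complex on the same $k$ monomials). Once the projective-dimension equality is in hand, Theorem \ref{pd2} together with the descent steps closes the proof.
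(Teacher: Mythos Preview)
Your overall architecture matches the paper's: reduce to $A$, then to the individual $A_\gamma$, and invoke Theorem~\ref{pd2} once you know $\pd_{A_\gamma}\bigl(A_\gamma/(x_{i_1},\ldots,x_{i_k})A_\gamma\bigr)=k$ for every subsequence. The gap is precisely in how you obtain this last equality.

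You assert that the strong parameter hypothesis ``forces, on every subsequence of length $k$, a cohomological lower bound of $k$ in $R$.'' But Definition~\ref{defhm} only says that each \emph{initial} segment $x_1,\ldots,x_i$ is a parameter sequence; it gives no information about an arbitrary subsequence $x_{i_1},\ldots,x_{i_k}$. So the bound you need in $R$ is simply not available from the hypothesis, and the subsequent transfer to $A$ and to $A_\gamma$ has nothing to transfer. A second, smaller issue: the inequality $\pd\ge k$ does not follow from Auslander--Buchsbaum together with a \v{C}ech lower bound. What is needed is the specific inequality $\cd(I)\le\pd(A/I)$ for monomial ideals (Lemma~\ref{lyu}), whose proof uses the endomorphism $X_i\mapsto X_i^t$ to identify $\pd(A/I)$ with $\pd(A/I^{[t]})$ and then realizes $H^i_I$ as a limit of $\Ext^i(A/I^{[t]},-)$.

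The paper repairs both points at once, and in fact uses only the \emph{full} sequence being a parameter sequence. From $H^\ell_{\underline{x}}(R)\neq 0$ one gets $H^\ell_{\underline{x}}(A)\neq 0$ via the splitting, hence $H^\ell_{\underline{x}}(A_\delta)\neq 0$ for a cofinal set of $\delta$ since \v{C}ech cohomology commutes with the filtered colimit. Thus $\cd(\underline{x}A_\delta)=\ell$. Now the subsequence statement is deduced \emph{inside} $A_\delta$ by Lemma~\ref{need}: from $\cd(\underline{x}A_\delta)=\ell$ one obtains $\cd\bigl((x_{i_1},\ldots,x_{i_k})A_\delta\bigr)=k$ for every subset, with no appeal to the hypothesis on $R$. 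Combining this with Lemma~\ref{lyu} and the Taylor upper bound yields $\pd=k$, and Theorem~\ref{pd2} finishes. In short, the missing idea is Lemma~\ref{need}: pass the top \v{C}ech nonvanishing down to $A_\delta$ first, and derive the subsequence bounds there rather than in $R$.
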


\section{A Lazard type Result}

The main results of this section  are Lemma \ref{mkey} and Corollary \ref{maincor}. They have essential role in the next section.
Recall that $\mathbb{Q}_{\geq0}$ is the set of all
 nonnegative rational numbers.
Our initial aim is to understand the structure of $\prod \mathbb{Q}_{\geq0}$ as a semigroup.
Note that $\prod  \mathbb{Q}$ is a vector space over $\mathbb{Q}$. Let $J$ be a base for it. Then
$\prod\mathbb{Q}\cong(\bigoplus_{J}\mathbb{Q})$ as $\mathbb{Q}$-vector spaces. But,  this
isomorphism does not send $\prod \mathbb{Q}_{\geq0}$ to $\bigoplus_{J}\mathbb{Q}_{\geq0}$.
Note that we are in the context of semigroups. The use of the minus is the main difficulty.
We recommend the reader to see \cite{B} for comparison-type results between product and coproduct.

\begin{definition}
Let $(x_n)_{n\in \mathbb{N}}$ and $(y_n)_{n\in \mathbb{N}}$ be two sequences of rational numbers.
We say $(x_n)_{n\in \mathbb{N}}\leq(y_n)_{n\in \mathbb{N}}$, if $x_n\leq y_n$ for all $n$.
Also, we denote the i-th component of the sequences
$(x)_{i\in\mathbb{N}}$ and  $\alpha$ by $(x)_i$  and  $\alpha_i$, respectively.
\end{definition}

\begin{definition}
Let $I\subseteq\mathbb{N}$ be an infinite index set.
An element
$\alpha\in\prod_{i \in \mathbb{N}} \mathbb{Q}$ is called $I$-\textit{supported} if  $\alpha_i \neq 0$ for all $i\in I$.
\end{definition}
 When we refer to an $I$-\textit{supported} element, we adopt that $I$ is infinite.

\begin{lemma}\label{ffkey}
Let $\{\beta_1,\ldots, \beta_n\}$ be a set of $I$-supported elements of $\prod_{i \in \mathbb{N}} \mathbb{Q}_{\geq0} $ that are linearly independent over $\mathbb{Q}$ and let  $\alpha\in\prod_{i \in \mathbb{N}} \mathbb{Q}_{\geq0}$ be $I$-supported.
Suppose $$\alpha = \beta_m + \ldots+ \beta_{n-1} - \beta_n .\  \ (\ast)$$
Then there is $(\beta _m)^{\prime}\in\prod_{i \in \mathbb{N}} \mathbb{Q}_{\geq0}$ such that  the following $I$-supported set $$\Gamma_m:=\{\beta_1,\ldots, \beta_{m-1},\beta_m - (\beta _m)^{\prime},\beta_{m+1} , \ldots ,\beta_{n-1},\beta_n - (\beta _m)^{\prime},(\beta _m)^{\prime} \}\subseteq \prod_{i \in \mathbb{N}} \mathbb{Q}_{\geq0}$$ is linearly independent over $\mathbb{Q}$. In particular,  $\{\beta_1,\ldots, \beta_n\} \subseteq \sum_{\gamma\in\Gamma_m}\mathbb{Q}_{\geq0}\gamma$.
\end{lemma}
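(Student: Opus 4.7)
The plan is to construct $(\beta_m)'$ in two stages: an elementary positive candidate that handles all the inequality constraints, followed by a rank-one perturbation to secure linear independence. A first useful observation is that $\Gamma_m$ is $\mathbb{Q}$-linearly independent if and only if $\{\beta_1,\ldots,\beta_n,(\beta_m)'\}$ is, because the change of generators $v_1 := \beta_m-(\beta_m)'$, $v_2 := \beta_n-(\beta_m)'$, $v_3 := (\beta_m)'$ is invertible (indeed $\beta_m = v_1+v_3$ and $\beta_n = v_2+v_3$). The non-negativity and $I$-support requirements on $\Gamma_m$ then reduce to the coordinate-wise conditions $(\beta_m)' \leq \beta_m$ and $(\beta_m)' \leq \beta_n$, with all three of $(\beta_m)',\ \beta_m-(\beta_m)',\ \beta_n-(\beta_m)'$ strictly positive on $I$.

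For the first stage, set $\delta_i := \min\{(\beta_m)_i,(\beta_n)_i\}$ for each $i\in\mathbb{N}$ and take the candidate $(\beta_m)'_0 := \tfrac{1}{2}\delta$. Then $0 \leq (\beta_m)'_0 \leq \tfrac{1}{2}\beta_m$ and $(\beta_m)'_0 \leq \tfrac{1}{2}\beta_n$ coordinate-wise, so $\beta_m-(\beta_m)'_0 \geq \tfrac{1}{2}\beta_m$ and $\beta_n-(\beta_m)'_0 \geq \tfrac{1}{2}\beta_n$. Combined with the $I$-support of $\beta_m$ and $\beta_n$ (which forces $\delta_i>0$ for $i\in I$), all of the positivity and $I$-support requirements are satisfied. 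The ``in particular'' clause is then immediate from $\beta_m = (\beta_m-(\beta_m)')+(\beta_m)'$, $\beta_n = (\beta_n-(\beta_m)')+(\beta_m)'$, together with $\beta_i \in \Gamma_m$ for $i \notin \{m,n\}$.

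The main obstacle is the $\mathbb{Q}$-linear independence of $\{\beta_1,\ldots,\beta_n,(\beta_m)'_0\}$: the candidate $(\beta_m)'_0$ may accidentally lie in $V := \mathbb{Q}\beta_1+\cdots+\mathbb{Q}\beta_n$, and nothing in the construction so far prevents this. To fix it I perturb: since $\dim_\mathbb{Q} V = n$ while $\{e_j : j \in I\}$ (the standard basis vectors of $\prod_{i\in\mathbb{N}}\mathbb{Q}$) is an infinite $\mathbb{Q}$-linearly independent family, at most $n$ of the $e_j$ lie in $V$, so there exists $j_0 \in I$ with $e_{j_0} \notin V$. Define $(\beta_m)' := (\beta_m)'_0 + \epsilon\, e_{j_0}$ for a positive rational $\epsilon < \tfrac{1}{2}\delta_{j_0}$. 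All non-negativity and $I$-support conditions persist by the slack $\tfrac{1}{2}\delta_{j_0}$ built into $(\beta_m)'_0$ at coordinate $j_0$. Finally, since $e_{j_0} \notin V$, the affine line $(\beta_m)'_0 + \mathbb{Q}\, e_{j_0}$ meets $V$ in at most one point, so choosing $\epsilon$ to avoid that single bad value (while remaining below $\tfrac{1}{2}\delta_{j_0}$) ensures $(\beta_m)' \notin V$, and hence $\{\beta_1,\ldots,\beta_n,(\beta_m)'\}$ is $\mathbb{Q}$-linearly independent, completing the construction.
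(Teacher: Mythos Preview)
Your proof is correct and, in fact, establishes the lemma without ever invoking the hypothesis $(\ast)$; the construction $(\beta_m)'_0 = \tfrac{1}{2}\min\{\beta_m,\beta_n\}$ followed by a one-coordinate perturbation along some $e_{j_0}\notin V$ is cleaner than what the paper does.

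The paper proceeds differently: for each $i\in I$ it uses $(\ast)$, which on $I$ reads $(\beta_n)_i < (\beta_m)_i + \cdots + (\beta_{n-1})_i$, to split $(\beta_n)_i$ into positive pieces $(\beta_k)_i' < (\beta_k)_i$ for all $m\le k\le n-1$ summing to $(\beta_n)_i$; it then argues that the resulting family of choices for $(\beta_m)'$ is uncountable while $V=\mathbb{Q}\beta_1+\cdots+\mathbb{Q}\beta_n$ is countable, so some choice avoids $V$. Your argument replaces this cardinality step with the observation that at most $n$ standard basis vectors lie in an $n$-dimensional space, which is more elementary.

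What the paper's approach buys is the additional structure recorded as (3.3.1) and (3.3.2): the side data $(\beta_{m+1})',\ldots,(\beta_{n-1})'$ and the identity $\sum_{k=m}^{n-1}(\beta_k)'=\beta_n$ on $I$. These are not part of the lemma's statement, but they are invoked in the proof of the next lemma (Lemma~\ref{fkey}) to show that the auxiliary element $\alpha_m = \beta_{m+1}+\cdots+\beta_{n-1}-(\beta_n-(\beta_m)')$ remains $I$-supported, allowing the construction to be iterated. Your $(\beta_m)'$ carries no such guarantee (indeed $\alpha_m=\alpha-\beta_m+(\beta_m)'$ can be negative on $I$ with your choice), so while your argument proves the lemma as stated, it would not slot directly into the paper's inductive machinery without further modification.
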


\begin{proof}
For each $i\in I$, we look at a set $\{{(\beta_m)_i}^{\prime},\ldots,{(\beta_{n-1})_i}^{\prime}\}$ of positive rational numbers  with the following properties
\begin{enumerate}
\item[(3.3.1)] ${(\beta_k)_i}^{\prime} < {(\beta_k)_i}$ for all $m \leq k \leq n-1$;
\item[(3.3.2)] ${(\beta_m)_i}^{\prime}+ \ldots +{(\beta_{n-1})_i}^{\prime}= (\beta_n)_i$.
\end{enumerate}
Such a thing exists, because  $(\beta_n)_i< {(\beta_m)_i}+ \ldots +{(\beta_{n-1})_i}$ and all of these are positive by $(\ast)$. For each $m \leq k \leq n-1$, we bring the following claim.

\begin{enumerate}
\item[] \textbf{Claim. }There are infinitely  many ways to choose ${(\beta_k)_i}^{\prime}$.
\item[] Indeed, we clarify this for $(\beta_m)_i^{\prime}$.
It is enough to replace $(\beta_m)_i^{\prime}$ by $(\beta_m)_i^{\prime}\pm1/ \ell$ and $(\beta_{m+1})_i^{\prime}$ by $(\beta_{m+1})_i^{\prime}\mp1/ \ell$ for
all sufficiently large $\ell\in \mathbb{N}$. Note that $(\beta_{m+1})_i^{\prime}\mp1/ \ell$ and $(\beta_m)_i^{\prime}\pm1/ \ell$ are positive, because   $(\beta_{m+1})_i,(\beta_m)_i>0$.
\end{enumerate}

Let $m \leq k \leq n-1$. We want to  define  $\beta_k^{\prime}$ in the reminding components.   Take $i\in \mathbb{N}\setminus I$ and suppose $0 < (\beta_n)_i$. Then there are nonnegative (not necessarily positive) rational numbers $$\{{(\beta_m)_i}^{\prime},\ldots,{(\beta_{n-1})_i}^{\prime}\} \ \ (\dag)$$ with the following two properties\\

\begin{enumerate}
\item[]  $\bullet: {(\beta_k)_i}^{\prime}\leq {(\beta_k)_i}$ (not necessarily strict inequality), and \\ $\bullet: {(\beta_m)_i}^{\prime}+ \ldots +{(\beta_{n-1})_i}^{\prime}= (\beta_n)_i$ .

\end{enumerate}

Note that there exists at least one choice for ${(\beta_k)_i}^{\prime}$. Define $$(\beta_k)_i^{\prime} :=\Bigg\{ ^{(\beta_k)_i
\  \  \  \  \   \  \  \  \   \  \    \    \    \  \  \  \    \    \    \  \  \     \textit{ if  }\    \ (\beta_n)_i =0}
_{\textit{ define by  }(\dag)  \   \    \    \   \  \  \  \  \  \  \  \   \  \  if \    \ (\beta_n)_i \neq0.}$$

Keep in mind  the above Claim, $|I|=\infty$ and that  $ \prod_{\mathbb{N}}\mathbb{N}$ is uncountable. These turn out that
there are uncountably many ways to choose the sequence $(\beta _m)^{\prime}:=({(\beta_m)_i}^{\prime})_{i \in \mathbb{N}}$.  We pick one of them with the following property
 $$(\beta _m)^{\prime} \notin \mathbb{Q}\beta_1 + \ldots + \mathbb{Q}\beta_n.$$
We can take such a sequence, because  $\mathbb{Q}\beta_1 + \ldots + \mathbb{Q}\beta_n$
  is countable.

Look at  $\Gamma_m:= \{\beta_1,\ldots, \beta_{m-1},\beta_m - (\beta _m)^{\prime},\beta_{m+1} , \ldots ,\beta_{n-1},\beta_n - (\beta _m)^{\prime},(\beta _m)^{\prime} \}.$ Clearly, $\{\beta_1,\ldots, \beta_n\} \subseteq \sum_{\gamma\in\Gamma_m}\mathbb{Q}_{\geq0}\gamma$, and so $$n\leq\dim_{\mathbb{Q}} \mathbb{Q}\Gamma_m\leq |\Gamma_m|=n+1.$$ Since $(\beta _m)^{\prime}\notin \mathbb{Q}\{\beta_1,\ldots, \beta_n\}$, $\dim_{\mathbb{Q}}  \mathbb{Q}\Gamma_m = n+1$. That is $\Gamma_m$ is linearly independent over $\mathbb{Q}$.  Let $i \in I$. Then $\gamma_i>0$ for all  $\gamma \in \Gamma_m$. This finishes the proof.
\end{proof}

The following is our key lemma. We prove it by using the reasoning of Lemma \ref{ffkey} several times.

\begin{lemma}\label{fkey}
Let $\{\beta_1,\ldots, \beta_n\}$ be a set of $I$-supported elements of $\prod_{i \in \mathbb{N}} \mathbb{Q}_{\geq0} $ that are linearly independent over $\mathbb{Q}$ and  let $\alpha\in\prod_{i \in \mathbb{N}} \mathbb{Q}_{\geq0}$ be $I$-supported.
Suppose $$\alpha = \eta_1\beta_1 +\ldots +\eta_{n-1}\beta_{n-1} - \beta_n \  \ (\dag)$$  where $\eta_i\in\{0,1\}$ for all $1\leq i \leq n-1$.
Then there exists a finite set $\Gamma \subseteq \prod_{i \in \mathbb{N}} \mathbb{Q}_{\geq0}$ of $I$-supported elements, linearly independent over $\mathbb{Q}$ and $\{\alpha,\beta_1,\ldots, \beta_n\} \subseteq  \sum_{\gamma\in \Gamma}\mathbb{Q}_{\geq0}\gamma$.
\end{lemma}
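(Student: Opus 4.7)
My plan is to carry out a single simultaneous splitting of $\beta_n$ across all the $\beta_k$ that appear with coefficient $1$ in $(\dag)$, generalizing the one-piece peel-off of Lemma \ref{ffkey}. Let $S:=\{k\in\{1,\ldots,n-1\}:\eta_k=1\}$. Because $\alpha$ and $\beta_n$ are $I$-supported and $\alpha=\sum_{k\in S}\beta_k-\beta_n\geq 0$, one has $S\neq\emptyset$ and $0<(\beta_n)_i<\sum_{k\in S}(\beta_k)_i$ for every $i\in I$.

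The first step is to choose, for each $k\in S$, a sequence $(\beta_k)'\in\prod_{\mathbb{N}}\mathbb{Q}_{\geq 0}$ satisfying: (a) $(\beta_k)'_i\leq(\beta_k)_i$ componentwise; (b) $\sum_{k\in S}(\beta_k)'_i=(\beta_n)_i$ for every $i$; (c) $0<(\beta_k)'_i<(\beta_k)_i$ whenever $i\in I$. These constraints are simultaneously satisfiable: on each $i\in I$ the feasible set is a nonempty open rational simplex of dimension $|S|-1$ (by the strict inequality above); for $i\notin I$ one sets $(\beta_k)'_i=0$ if $(\beta_n)_i=0$, and otherwise distributes $(\beta_n)_i$ arbitrarily in $[0,(\beta_k)_i]$ (possible since $\alpha_i\geq 0$ forces $\sum_{k\in S}(\beta_k)_i\geq(\beta_n)_i$). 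When $|S|\geq 2$ this leaves a continuum of admissible tuples on each $i\in I$, so the cardinality trick from Lemma \ref{ffkey} allows me to additionally demand that $\{(\beta_k)':k\in S\}$ be linearly independent modulo the countable subspace $\mathbb{Q}\{\beta_1,\ldots,\beta_{n-1}\}$; the exceptional locus is a countable union of proper $\mathbb{Q}$-affine subsets of an uncountable choice space, hence avoidable. When $|S|=1$, constraint (b) rigidly forces $(\beta_m)'=\beta_n$, and the required independence is automatic from the hypothesis that $\beta_1,\ldots,\beta_n$ are linearly independent.

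With the $(\beta_k)'$ in hand I would set
\[
\Gamma:=\{\beta_k:k\notin S,\,k\neq n\}\cup\{\beta_k-(\beta_k)':k\in S\}\cup\{(\beta_k)':k\in S\},
\]
and verify the conclusion. Nonnegativity and $I$-supportedness of every $\gamma\in\Gamma$ follow from (a)--(c); the identities $\beta_k=(\beta_k-(\beta_k)')+(\beta_k)'$ for $k\in S$, $\beta_n=\sum_{k\in S}(\beta_k)'$, and consequently $\alpha=\sum_{k\in S}(\beta_k-(\beta_k)')$, place $\{\alpha,\beta_1,\ldots,\beta_n\}$ inside $\sum_{\gamma\in\Gamma}\mathbb{Q}_{\geq 0}\gamma$; and $\mathbb{Q}\Gamma=\mathbb{Q}\{\beta_1,\ldots,\beta_{n-1}\}+\mathbb{Q}\{(\beta_k)':k\in S\}$ has dimension $(n-1)+|S|$ by the genericity step, matching $|\Gamma|=(n-1-|S|)+2|S|$, so $\Gamma$ is $\mathbb{Q}$-linearly independent.

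The main obstacle is the genericity in the middle step: one has to justify that the bad choices of $(\beta_k)'$ sit inside a countable union of proper $\mathbb{Q}$-affine subsets of the (uncountable) choice space, and separately handle the degenerate case $|S|=1$, where the splitting is rigid, the continuum of choices disappears, and the required linear independence must instead be read off directly from the hypothesis on $\beta_1,\ldots,\beta_n$.
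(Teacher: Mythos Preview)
Your simultaneous splitting is correct and gives a cleaner end-state than the paper's proof, but it is organized differently. The paper proceeds \emph{iteratively}: it applies Lemma~\ref{ffkey} to peel off one piece $(\beta_m)'$, obtaining a new $I$-supported element $\alpha_m=\beta_{m+1}+\cdots+\beta_{n-1}-(\beta_n-(\beta_m)')$ and an enlarged independent set $\Gamma_m$, and then repeats the procedure for $\alpha_m$, building $\Gamma_{m+1},\Gamma_{m+2},\ldots$ until the last step produces the final $\Gamma$. Your argument collapses all of these iterations into a single choice of the tuple $\{(\beta_k)'\}_{k\in S}$ subject to $\sum_{k\in S}(\beta_k)'=\beta_n$; the resulting $\Gamma$ is exactly the set the paper reaches at the end of its iteration. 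What you gain is brevity; what the paper gains is that each step is literally the Lemma~\ref{ffkey} trick ``uncountable choice space, countable bad set'', so no further justification is needed.

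That last point is where your write-up needs tightening. The inference ``the exceptional locus is a countable union of proper $\mathbb{Q}$-affine subsets of an uncountable choice space, hence avoidable'' is false as a general principle (for instance $\prod_{\mathbb{N}}\mathbb{Q}=\bigcup_{q\in\mathbb{Q}}\{x:x_1=q\}$). What makes it true here is the special \emph{product structure} of the bad sets: for each nonzero $(c_k)\notin\mathbb{Q}(1,\ldots,1)$ and each $v\in\mathbb{Q}\{\beta_1,\ldots,\beta_{n-1}\}$, the bad set is $\prod_i A_i$ where, for every $i\in I$, $A_i$ is a proper affine slice of the feasible simplex $F_i$. Since $I$ is infinite and the bad conditions are countable, a diagonal argument (match the $\ell$-th bad condition with the $\ell$-th index of $I$ and choose the $i_\ell$-coordinate in $F_{i_\ell}\setminus A_{i_\ell}$) produces a point outside every bad set. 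Equivalently, one can choose the $(\beta_k)'$ sequentially for $k\in S\setminus\{k_0\}$, each outside the countable space $\mathbb{Q}\{\beta_1,\ldots,\beta_n\}+\mathbb{Q}\{\text{previous choices}\}$, exactly as in Lemma~\ref{ffkey}; this sequential version is precisely the paper's iteration. Either way, once you make this step explicit, your proof is complete.
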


\begin{proof}
First assume that $\alpha = \beta_ k - \beta_n$ for some $1 \leq k \leq n-1$. Then $\{\beta_1, \ldots,\widehat{\beta_k},\ldots ,\beta_{n}, \alpha\}$ is the desirable set.

Fix $ 1 < m <n-1$ and assume that  $\eta_i =0$ for all  $1 \leq i \leq m-1$ and that $\eta_i =1$ for all $ m \leq i \leq n-1$. Then,    $\alpha = \beta_m + \ldots+ \beta_{n-1} - \beta_n$. Let $(\beta _m)^{\prime}$ be as Lemma \ref{ffkey}.
Put $$\alpha_m := \beta_{m+1} + \ldots+ \beta_{n-1}- (\beta_n -{\beta_m}^{\prime}).$$ In view of  (3.3.1) and (3.3.2), $\alpha_m$  is $I$-supported. Define $\Gamma_m$ by the Lemma \ref{ffkey} and apply Lemma \ref{ffkey} for the new data $\{\Gamma_m,\alpha_m\}$. By repeating this procedure, we find an $I$-supported element $(\beta _{n-3})^{\prime}\in\prod \mathbb{Q}_{\geq0}\setminus \mathbb{Q}\Gamma_ {n-4}$ and  the following $I$-supported set
\[\begin{array}{ll}
\Gamma_{n-3}:=&\{\beta_1,\ldots,\beta_{m-1}\}\cup\\
& \{\beta_m - (\beta _m)^{\prime}, \ldots ,\beta_{n-3}-(\beta _{n-3})^{\prime}\}\cup\\
& \{\beta_{n-2},\beta_{n-1},\beta_n - \sum_{i=m}^{n-3}(\beta _i)^{\prime} \}\cup\\
&\{(\beta _m)^{\prime}, \ldots , (\beta _{n-3})^{\prime}\}
\end{array}\]
such that $\dim \mathbb{Q}\Gamma_{n-3}=2n-m-2$  and $\{\beta_1,\ldots, \beta_n\}\subseteq \sum_{\gamma\in \Gamma_{n-3}}\mathbb{Q}_{\geq0}\gamma$.
Look at  $$\alpha_{n-3} := \beta_{n-2} + \beta_{n-1}- (\beta_n- {\beta_m}^{\prime}-\ldots- {\beta_{n-3}}^{\prime}).$$ Then $\alpha_{n-3}$ is $I$-supported.
 Fix the data $(\alpha_{n-3},\Gamma_{n-3})$ and apply the reasoning of Lemma \ref{ffkey} to find $I$-supported elements $(\beta _{n-2})^{\prime}, (\beta _{n-1})^{\prime}$  with the following properties:\begin{enumerate}
\item[(1)] $(\beta _{n-2})^{\prime} + (\beta _{n-1})^{\prime} = \beta_n - ((\beta _m)^{\prime}+ \ldots + (\beta _{n-3})^{\prime})$;
\item[(2)]  $\beta_{n-1}-(\beta _{n-1})^{\prime},\beta_{n-2}-(\beta _{n-2})^{\prime} \in  \prod \mathbb{Q}_{\geq0}$ are $I$-supported;
\item[(3)]  $\beta _{n-2}^{\prime}\notin\mathbb{Q}\Gamma_{n-3}$ and  $\beta _{n-1}^{\prime}\notin\mathbb{Q}\Gamma_{n-3}\cup\mathbb{Q}\beta _{n-2}^{\prime}$.
\end{enumerate}

Now we define $$\Gamma_{n-2}:= \{\beta_1, \ldots,\beta_{m-1},\beta_m - (\beta _m)^{\prime}, \ldots ,\beta_{n-1}-(\beta _{n-1})^{\prime}, (\beta _m)^{\prime}, \ldots , (\beta _{n-1})^{\prime} \}.$$  It has the following properties:

\begin{enumerate}
\item[(i)] $\dim \mathbb{Q}\Gamma_{n-2}=|\Gamma_{n-2}|=2n-m-1$, by (3);
\item[(ii)] $0 < \gamma_i$ for all $i \in I$ and $\gamma \in \Gamma$, by (2);
\item[(iii)] $\{\beta_1,\ldots, \beta_n,\alpha \} \subseteq \sum_{\gamma\in \Gamma_{n-2}} \mathbb{Q}_{\geq0}\gamma$, because
\end{enumerate}
\[\begin{array}{ll}
\alpha &= \sum_{i=m}^{n-1}\beta_i - \beta_n\\&=\sum_{i=m}^{n-1}(\beta_i-(\beta _i)^{\prime})+ \sum_{i=m}^{n-1}(\beta _i)^{\prime}- \beta_n\\
&\stackrel{(1)}=\sum_{i=m}^{n-1}(\beta_i-(\beta _i)^{\prime})+ \beta_n- \beta_n\\
&\in\sum_{\gamma\in \Gamma_{n-2}} \mathbb{Q}_{\geq0}\gamma.
\end{array}\]
It is now clear that $\Gamma:=\Gamma_{n-2}$ is the set that we search for it.
\end{proof}

Our next aim is to drop the assumption  $(\dag)$ of Lemma \ref{fkey}.

\begin{lemma}\label{key}
Let $\{\beta_1,\ldots, \beta_n\}$ be a set   of $I$-supported elements of $\prod_{i \in \mathbb{N}} \mathbb{Q}_{\geq0} $ that are linearly independent over $\mathbb{Q}$ and  $\alpha\in\prod_{i \in \mathbb{N}} \mathbb{Q}_{\geq0}$ be nonzero.
Then there exists  a finite set   of $I$-supported elements $\Gamma \subseteq \prod_{i \in \mathbb{N}} \mathbb{Q}_{\geq0}$ such that $\Gamma$ is linearly independent over $\mathbb{Q}$ and $\{\alpha,\beta_1,\ldots, \beta_n\} \subseteq  \sum_{\gamma\in \Gamma}\mathbb{Q}_{\geq0}\gamma$.
\end{lemma}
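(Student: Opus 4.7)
The plan is to prove Lemma~\ref{key} by induction on the number of negative coefficients appearing when $\alpha$ is expanded (after a positive rescaling) in the $\beta_i$; the base case will be precisely hypothesis $(\dag)$ of Lemma~\ref{fkey}. Two preparatory remarks. The conclusion implicitly forces $\alpha$ to be $I$-supported, since otherwise a vanishing $I$-coordinate of $\alpha$ expanded with nonnegative coefficients against $I$-supported $\gamma$ would force every coefficient to vanish and hence $\alpha=0$; so I work under this assumption. Next, if $\alpha\notin\mathbb{Q}\{\beta_1,\ldots,\beta_n\}$, the set $\{\alpha,\beta_1,\ldots,\beta_n\}$ is already $\mathbb{Q}$-linearly independent and $I$-supported and serves as $\Gamma$. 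So assume $\alpha=\sum q_i\beta_i\in\mathbb{Q}\{\beta_1,\ldots,\beta_n\}$. Replacing each $\beta_i$ with $q_i\neq 0$ by the positive multiple $|q_i|\beta_i$ preserves $I$-support, $\mathbb{Q}$-linear independence, membership in $\prod\mathbb{Q}_{\geq 0}$, and the cone $\sum\mathbb{Q}_{\geq 0}\beta_i$, so I may assume $q_i\in\{-1,0,1\}$ and write $\alpha=\sum_{P}\beta_i-\sum_{N}\beta_i$ with $P:=\{i:q_i=1\}$ and $N:=\{i:q_i=-1\}$; note that $|P|\geq 1$ whenever $|N|\geq 1$ since $\alpha\geq 0$.

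Now induct on $|N|$. If $|N|=0$, take $\Gamma:=\{\beta_1,\ldots,\beta_n\}$. If $|N|=1$, then, after a harmless reindexing placing the unique element of $N$ at position $n$, hypothesis $(\dag)$ of Lemma~\ref{fkey} is satisfied and that lemma delivers $\Gamma$. For the inductive step $|N|\geq 2$, fix $n_0\in N$ and define
\[
\alpha':=\sum_{P}\beta_i-\beta_{n_0}=\alpha+\sum_{n\in N\setminus\{n_0\}}\beta_n.
\]
Since $|N|\geq 2$, at least one $I$-supported $\beta_n$ appears on the right, so $\alpha'$ is $I$-supported, nonnegative, and of the shape $(\dag)$. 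Applying Lemma~\ref{fkey} to $\alpha'$ with $\{\beta_1,\ldots,\beta_n\}$ produces an $I$-supported, $\mathbb{Q}$-linearly independent set $\Gamma_1\subseteq\prod\mathbb{Q}_{\geq 0}$ with $\{\alpha',\beta_1,\ldots,\beta_n\}\subseteq\sum_{\gamma\in\Gamma_1}\mathbb{Q}_{\geq 0}\gamma$.

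The decisive step is to re-express $\alpha$ in the enlarged basis $\Gamma_1$ and observe that the number of negative coefficients strictly drops. Inspecting the proof of Lemma~\ref{fkey}, the set $\Gamma_1$ contains the splits $(\beta_i)'$ and the remainders $\beta_i-(\beta_i)'$ for $i\in P$, together with every $\beta_j$ with $j\notin P\cup\{n_0\}$ (in particular every $\beta_n$ for $n\in N\setminus\{n_0\}$); moreover the splitting identity $\sum_{i\in P}(\beta_i)'=\beta_{n_0}$ rewrites $\alpha'$ as $\sum_{i\in P}(\beta_i-(\beta_i)')$. Substituting,
\[
\alpha \;=\; \alpha'-\sum_{n\in N\setminus\{n_0\}}\beta_n \;=\; \sum_{i\in P}\bigl(\beta_i-(\beta_i)'\bigr)-\sum_{n\in N\setminus\{n_0\}}\beta_n,
\]
an expansion of $\alpha$ in $\Gamma_1$ with $\{-1,0,1\}$-coefficients and exactly $|N|-1$ negative ones. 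Applying the induction hypothesis to $\alpha$ together with the set $\Gamma_1$ yields the desired $\Gamma$; the chain $\{\beta_1,\ldots,\beta_n\}\subseteq\sum\mathbb{Q}_{\geq 0}\Gamma_1\subseteq\sum\mathbb{Q}_{\geq 0}\Gamma$ together with $\alpha\in\sum\mathbb{Q}_{\geq 0}\Gamma$ finishes the proof.

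I expect the main obstacle to be exactly this bookkeeping in the inductive step: one must extract from the proof of Lemma~\ref{fkey} both the membership of $\beta_j$ (for $j\notin P\cup\{n_0\}$) in $\Gamma_1$ and the clean identity $\alpha'=\sum_{i\in P}(\beta_i-(\beta_i)')$, which together guarantee that one iteration reduces $|N|$ by exactly one. The remaining steps---rescaling to $\{-1,0,1\}$-coefficients, handling the off-span case, and the implicit promotion of ``nonzero'' to ``$I$-supported'' noted at the outset---are routine.
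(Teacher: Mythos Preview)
Your proof is correct and follows the same architecture as the paper's: reduce to the case $\alpha\in\mathbb{Q}\{\beta_i\}$, rescale so the coefficients lie in $\{-1,0,1\}$, and induct on the number of negative coefficients, invoking Lemma~\ref{fkey} both for the base case and to peel off one negative term per step. Your remark that the statement tacitly requires $\alpha$ to be $I$-supported is well taken and applies equally to the paper's argument.

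There is one organizational difference worth noting. In the inductive step the paper applies Lemma~\ref{fkey} to $\alpha_1$ together with only $\{\beta_1,\ldots,\beta_{n-j+1}\}$, and must then \emph{re-adjoin} the remaining negative $\beta$'s; to keep $\Gamma_2:=\Gamma_1\cup\{\beta_{n-j+2},\ldots,\beta_n\}$ linearly independent it appeals to the uncountable freedom in the construction of $\Gamma_1$. You instead feed \emph{all} of $\{\beta_1,\ldots,\beta_n\}$ into Lemma~\ref{fkey} at once, with the remaining negatives carrying $\eta_i=0$; inspection of that proof then shows these $\beta_j$ already sit inside $\Gamma_1$ untouched, so no separate linear-independence argument is needed. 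The price you pay is a more detailed reading of the output of Lemma~\ref{fkey} (the identities $\sum_{i\in P}(\beta_i)'=\beta_{n_0}$ and $\alpha'=\sum_{i\in P}(\beta_i-(\beta_i)')$, which in the edge case $|P|=1$ degenerate to $(\beta_k)'=\beta_{n_0}$ and $\alpha'\in\Gamma_1$). Both routes are valid; yours is marginally cleaner.
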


\begin{proof}
 If $\dim (\mathbb{Q}\beta_1 + \ldots + \mathbb{Q}\beta_n + \mathbb{Q}\alpha) = n+1$, there is no thing to prove. Thus,
 we can assume that $\dim (\mathbb{Q}\beta_1 + \ldots + \mathbb{Q}\beta_n + \mathbb{Q}\alpha) = n.$ There are positive rational numbers $\{\epsilon_{\ell}\}$ such that
 $\alpha = \sum_{i=m}^{n-j}\epsilon_i\beta_i -\sum_{i=n-j+1}^{n}\epsilon_i\beta_i.$ If $m = n-j$, the set $\{\beta_1,\ldots,\beta_{m-1},\beta_{m+1},\ldots,\beta_n,\alpha\}$ is the desirable set. So we assume that $m < n-j$. Also, without loss of the generality, we assume that $$\alpha =\beta_m +\ldots +\beta_{n-j} -\beta_{n-j+1}-\ldots-\beta_n.$$
We argue by induction on $j$.  Lemma \ref{fkey} yields the proof when $j=1$. Now suppose $j>1$ and assume inductively that the result has been proved for $j-1$. Put $$\alpha_1 := \beta_m +\ldots+\beta_{n-j}- \beta_{n-j+1}.$$ Then by Lemma \ref{fkey}, there exists  a finite set $\Gamma_1$ of $I$-supported
elements that  are linearly independent over $\mathbb{Q}$ and $$\{\alpha_1,\beta_1,\ldots, \beta_{n-j+1}\} \subseteq  \sum_{\gamma\in \Gamma_1}\mathbb{Q}_{\geq0}\gamma.$$ By replacing  $\{\gamma_1,\ldots,\gamma_{\ell}\}$ with a suitable scaler multiplication, we have $\alpha_1=\gamma_1+\ldots+\gamma_{\ell}$ for some $\{\gamma_i\}$.
By the reasoning of Lemma \ref{fkey}, we have uncountable choice for each elements of $\Gamma_1$. Hence, we can choose $\Gamma_1$ such that $$\Gamma_2 := \Gamma_1  \cup \{ \beta_{n-j+2},\ldots,\beta_n\}$$ is linearly independent over $\mathbb{Q}$. Rewrite $$\alpha = \gamma_1+\ldots+\gamma_{\ell} - \beta_{n-j+2}-\ldots -\beta_n.$$
The number of negative signs appear in this presentation is $j-1$. To finish the proof, it remains to apply the induction hypothesis.
\end{proof}

\begin{corollary}\label{hkey}
Let $\{\beta_1,\ldots, \beta_n\}$ be a  set   of $I$-supported elements of $\prod_{i \in \mathbb{N}} \mathbb{Q}_{\geq0} $.
Then there exists  a finite set $\Gamma \subseteq \prod_{i \in \mathbb{N}} \mathbb{Q}_{\geq0}$  of $I$-supported elements    such that $\Gamma$ is linearly independent over $\mathbb{Q}$ and $\{\beta_1,\ldots, \beta_n\} \subseteq  \sum_{\gamma\in \Gamma}\mathbb{Q}_{\geq0}\gamma$.
\end{corollary}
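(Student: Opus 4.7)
The plan is to deduce the corollary from Lemma~\ref{key} by induction on $n$. The point is that Lemma~\ref{key} already handles the case where one extra element $\alpha$ is thrown into an already linearly independent set of $I$-supported vectors; here we are given an arbitrary (possibly linearly dependent) list of $I$-supported elements and must produce a linearly independent enveloping set from scratch, which we accomplish by adding the $\beta_i$ one at a time.

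For the base case $n=1$, since $\beta_1$ is $I$-supported and $I$ is infinite, $\beta_1\neq 0$, so $\Gamma:=\{\beta_1\}$ is itself linearly independent over $\mathbb{Q}$ and contains $\beta_1$ in its nonnegative rational cone. For the inductive step, assume the statement for any set of $n-1$ many $I$-supported elements, and apply it to $\{\beta_1,\ldots,\beta_{n-1}\}$ to obtain a finite set $\Gamma'\subseteq\prod_{i\in\mathbb{N}}\mathbb{Q}_{\geq0}$ of $I$-supported elements that is linearly independent over $\mathbb{Q}$ with $\{\beta_1,\ldots,\beta_{n-1}\}\subseteq\sum_{\gamma\in\Gamma'}\mathbb{Q}_{\geq0}\gamma$. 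There are two cases. If $\beta_n\notin\mathbb{Q}\Gamma'$, simply take $\Gamma:=\Gamma'\cup\{\beta_n\}$: it remains a finite linearly independent set of $I$-supported elements, and $\{\beta_1,\ldots,\beta_n\}\subseteq\sum_{\gamma\in\Gamma}\mathbb{Q}_{\geq0}\gamma$ is immediate.

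If instead $\beta_n\in\mathbb{Q}\Gamma'$, enumerate $\Gamma'=\{\beta_1',\ldots,\beta_s'\}$ and apply Lemma~\ref{key} with this linearly independent family playing the role of $\{\beta_1,\ldots,\beta_n\}$ and with $\alpha:=\beta_n$, which is nonzero because it is $I$-supported. This hands back a finite $I$-supported set $\Gamma\subseteq\prod_{i\in\mathbb{N}}\mathbb{Q}_{\geq0}$, linearly independent over $\mathbb{Q}$, with $\{\beta_n,\beta_1',\ldots,\beta_s'\}\subseteq\sum_{\gamma\in\Gamma}\mathbb{Q}_{\geq0}\gamma$. Since $\sum_{\gamma\in\Gamma'}\mathbb{Q}_{\geq0}\gamma\subseteq\sum_{\gamma\in\Gamma}\mathbb{Q}_{\geq0}\gamma$, the previously accumulated $\beta_1,\ldots,\beta_{n-1}$ are also in the cone over $\Gamma$, and $\Gamma$ is the set we sought.

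The only thing that needs any care is the dichotomy between the two cases and the observation that Lemma~\ref{key} takes the weaker hypothesis ``$\alpha$ nonzero'' rather than ``$\alpha$ is $I$-supported'', so we are safe in inserting an $I$-supported element into it. I do not expect a real obstacle here: all the delicate combinatorial choices (preserving positivity on $I$, avoiding countably many forbidden $\mathbb{Q}$-linear relations) were already carried out in Lemmas~\ref{ffkey}, \ref{fkey} and \ref{key}, and the present corollary only repackages those in an induction.
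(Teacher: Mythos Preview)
Your proof is correct and follows essentially the same route as the paper's: induction on $n$, with the inductive step appealing to Lemma~\ref{key} applied to the linearly independent set $\Gamma'$ obtained from the inductive hypothesis together with the new element $\beta_n$. The only difference is that your case split on whether $\beta_n\in\mathbb{Q}\Gamma'$ is unnecessary, since Lemma~\ref{key} already disposes of the independent case in its first line; the paper accordingly just invokes Lemma~\ref{key} directly without the dichotomy.
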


\begin{proof}
We use induction on $n$. When $n=1$, there is nothing to prove. By induction hypothesis, there exists  a finite set $\Delta \subseteq \prod_{i \in \mathbb{N}} \mathbb{Q}_{\geq0}$ such that $\Delta$ is linearly independent over $\mathbb{Q}$, $\{\beta_1,\ldots, \beta_{n-1}\} \subseteq  \sum_{\delta\in \Delta}\mathbb{Q}_{\geq0}\delta $ and $\delta_i \neq 0$ for all $i \in I$. Applying Lemma \ref{key} for $\Delta$ and $\beta_n$, yields the claim.
 \end{proof}

\begin{definition}
An element $\alpha\in\prod_{\mathbb{N}}\mathbb{Q}$  is called \textit{almost non-negative}, if there exists finitely many $i \in \mathbb{N}$ such that $(\alpha)_i$ is negative. An almost non-negative subset of  $\prod_{i \in \mathbb{N}} \mathbb{Q}$ can be defined in a similar way. An element $\beta\in\prod_{\mathbb{N}}\mathbb{Q}$  is called \textit{almost zero}, if there exists finitely many $i \in \mathbb{N}$ such that $\beta_i$ is nonzero.
\end{definition}

Now we are ready to prove the following.

\begin{lemma}\label{mkey}
Let $M\subseteq\prod_{i \in \mathbb{N}} \mathbb{Q}$ be  almost non-negative  and let $\{\beta_1,\ldots, \beta_n\}$ be a subset  of $I$-supported elements  of $M $ linearly independent over $\mathbb{Q}$ and  let $\alpha\in M$ be  $I$-supported.
Then there exists a finite set $\Gamma \subseteq M$  of $I$-supported elements such that $\Gamma$ is linearly independent over $\mathbb{Q}$ and $\{\alpha,\beta_1,\ldots, \beta_n\} \subseteq  \sum_{\gamma\in \Gamma}\mathbb{Q}_{\geq0}\gamma$.
\end{lemma}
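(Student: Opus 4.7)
The plan is to reduce Lemma~\ref{mkey} to Lemma~\ref{key} by means of a \emph{generic almost-zero shift}. Since $\{\alpha,\beta_1,\ldots,\beta_n\}$ is a finite subset of $M$ and each element is almost non-negative, there exists a finite set $T\subseteq\mathbb{N}$ containing every coordinate at which any of these elements is negative; I enlarge $T$, if needed, so that $|T|>n+1$. The rational subspace $\mathbb{Q}\alpha+\mathbb{Q}\beta_1+\cdots+\mathbb{Q}\beta_n$ has dimension at most $n+1<|T|$, so its intersection with $\mathbb{Q}^T\subseteq\prod_{\mathbb{N}}\mathbb{Q}$ (vectors supported on $T$) is a proper subspace of $\mathbb{Q}^T$. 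I would then pick $c\in\mathbb{Q}^T$ with $c_i>0$ for $i\in T$, sufficiently large and generic, so that: (a) every $\alpha+c$ and $\beta_j+c$ lies in $\prod_{\mathbb{N}}\mathbb{Q}_{\geq 0}$; (b) each shifted vector is $I$-supported (automatic off $T$, and enforced on $I\cap T$ by avoiding finitely many hyperplanes); and (c) $c\notin\mathbb{Q}\alpha+\mathbb{Q}\beta_1+\cdots+\mathbb{Q}\beta_n$. Condition (c) gives linear independence of $\{\beta_j+c\}$, so Lemma~\ref{key} applies to the shifted family and produces a finite, linearly independent, $I$-supported set $\Gamma_0=\{\gamma_1,\ldots,\gamma_L\}\subseteq\prod_{\mathbb{N}}\mathbb{Q}_{\geq 0}$ with expressions $\alpha+c=\sum_\ell a_\ell\gamma_\ell$ and $\beta_j+c=\sum_\ell b_{j\ell}\gamma_\ell$ for some $a_\ell,b_{j\ell}\in\mathbb{Q}_{\geq 0}$. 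Padding $\Gamma_0$ by a generic $I$-supported element of $\prod_{\mathbb{N}}\mathbb{Q}_{\geq 0}$ not in $\mathbb{Q}\Gamma_0$ (appearing with zero coefficient in every expression above), I may assume $L>n+1$.

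The next step inverts the shift. I would solve the finite linear system
\[
\sum_\ell a_\ell\mu_\ell=1,\qquad \sum_\ell b_{j\ell}\mu_\ell=1\quad(j=1,\ldots,n)
\]
in $\mu=(\mu_\ell)\in\mathbb{Q}^L$. Consistency of this system is the main technical point, and splits into two subcases. If $\alpha\notin\mathbb{Q}\beta_1+\cdots+\mathbb{Q}\beta_n$, or if $\alpha=\sum c_j\beta_j$ with $\sum c_j\neq 1$, a short calculation using (c) shows that $\{\alpha+c,\beta_1+c,\ldots,\beta_n+c\}$ is linearly independent over $\mathbb{Q}$; the coefficient matrix has full row rank $n+1$ and every right-hand side is reachable. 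Otherwise $\alpha=\sum c_j\beta_j$ with $\sum c_j=1$, so $\alpha+c=\sum c_j(\beta_j+c)$ and the $\alpha$-row of the coefficient matrix equals $\sum c_j$ times the combined $\beta_j$-rows; the $\alpha$-equation then follows automatically from the $\beta_j$-equations with total factor $\sum c_j=1$, and the system reduces to a consistent rank-$n$ system in $L$ unknowns.

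Set $\tilde\gamma_\ell:=\gamma_\ell-\mu_\ell c$ and $\Gamma:=\{\tilde\gamma_1,\ldots,\tilde\gamma_L\}$. By construction $\sum_\ell a_\ell\tilde\gamma_\ell=(\alpha+c)-c=\alpha$ and $\sum_\ell b_{j\ell}\tilde\gamma_\ell=\beta_j$, exhibiting $\{\alpha,\beta_1,\ldots,\beta_n\}\subseteq\sum_{\gamma\in\Gamma}\mathbb{Q}_{\geq 0}\gamma$. Each $\tilde\gamma_\ell$ agrees with the non-negative $\gamma_\ell$ off the finite set $T$, so it is almost non-negative and lies in $M$. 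Because $L>n+1$, the affine solution space for $\mu$ has positive dimension, and I would choose $\mu$ generically within it so that each $\tilde\gamma_\ell$ is nonzero at every $i\in I\cap T$ (finitely many hyperplanes to avoid), ensuring $I$-supportedness, and so that $\Gamma$ is linearly independent over $\mathbb{Q}$: any relation $\sum\lambda_\ell\tilde\gamma_\ell=0$ rewrites as $\sum\lambda_\ell\gamma_\ell=(\sum\lambda_\ell\mu_\ell)c$, in which either $\sum\lambda_\ell\mu_\ell=0$ and the independence of $\Gamma_0$ forces $\lambda_\ell=0$, or else $\mu$ satisfies one additional linear condition that the generic choice avoids. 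The chief obstacle is the consistency of the $\mu$-system in the degenerate subcase $\sum c_j=1$; this is precisely handled by the observation that the $\alpha$-equation is then a formal consequence of the $\beta_j$-equations, while all remaining requirements reduce to dimension counting and generic avoidance.
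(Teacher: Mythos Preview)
Your shift-and-unshift strategy is appealing, but the linear-independence step for $\Gamma=\{\tilde\gamma_\ell\}$ breaks down in precisely the case that matters. Suppose $\alpha=\sum_j c_j\beta_j$ with $\sum_j c_j\neq 1$ (the generic dependent case; when $\alpha\notin\mathbb{Q}\beta_1+\cdots+\mathbb{Q}\beta_n$ the lemma is trivial with $\Gamma=\{\alpha,\beta_1,\ldots,\beta_n\}$). Then
\[
(1-\textstyle\sum_j c_j)\,c=(\alpha+c)-\sum_j c_j(\beta_j+c)\in\mathbb{Q}\Gamma_0,
\]
so $c=\sum_\ell d_\ell\gamma_\ell$ for unique $d_\ell$, and in fact $d=\frac{1}{1-\sum c_j}\bigl(a-\sum_j c_j b_j\bigr)$ lies in the row space of your coefficient matrix $A$. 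Consequently, for \emph{every} $\mu$ with $A\mu=\mathbf{1}$ one has
\[
\langle d,\mu\rangle=\frac{1}{1-\sum_j c_j}\Bigl(\langle a,\mu\rangle-\sum_j c_j\langle b_j,\mu\rangle\Bigr)=\frac{1-\sum_j c_j}{1-\sum_j c_j}=1,
\]
and therefore $\sum_\ell d_\ell\tilde\gamma_\ell=\sum_\ell d_\ell\gamma_\ell-\langle d,\mu\rangle\,c=c-c=0$ is an unavoidable nontrivial relation. The ``additional linear condition'' you plan to dodge by a generic choice of $\mu$ is identically satisfied on the entire solution set, so no generic choice helps. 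Dropping one $\tilde\gamma_{\ell_0}$ with $d_{\ell_0}\neq 0$ restores independence, but then the adjusted coefficients $a_\ell-(a_{\ell_0}/d_{\ell_0})d_\ell$ and $b_{j\ell}-(b_{j\ell_0}/d_{\ell_0})d_\ell$ need not remain nonnegative, so the containment $\{\alpha,\beta_j\}\subseteq\sum\mathbb{Q}_{\geq0}\tilde\gamma_\ell$ is lost.

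The paper avoids this obstruction by a different mechanism: instead of shifting by an almost-zero $c$, it \emph{truncates} each vector into a head $\dot\beta_k$ supported on $\{1,\ldots,l\}$ and a tail $\ddot\beta_k$ supported on $\{l+1,l+2,\ldots\}$, applies Corollary~\ref{hkey} to the nonnegative tails to obtain $\{\gamma_k\}$ with $(\gamma_k)_i=0$ for $i\le l$, and then solves a linear system for head-supported corrections $\gamma_k'$. Because $\gamma_k$ and $\gamma_k'$ have \emph{disjoint} coordinate supports, the family $\{\gamma_k+\gamma_k'\}$ inherits linear independence directly from $\{\gamma_k\}$, with no genericity argument needed. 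Your correction $-\mu_\ell c$ overlaps $\gamma_\ell$ on $T$, and that overlap is exactly what forces the dependency above.
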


\begin{proof}
Without loss  of the generality, we can assume that $$\alpha = \beta_m + \ldots + \beta_{n-j}- \beta_{n-j+1}-\ldots-\beta_{n}$$ and that $m < n-j$. The reason presented in Lemma \ref{key}. Let $1 \leq k \leq s$.
Take  the integer $l$ be such that  $\alpha_i$ and $(\beta_k)_i$ are nonnegative for all $i>l$. Define\\

$(\dot{\beta_k})_i:= \Bigg\{ ^{(\beta_k)_i \   \ \textit{for all} \ \ i \leq l }
_{0   \ \ \ \   \  \ \textit{for  all }\ \ i > l}$  \  \  \ \ \ \ and \ \ \ \ \ \ \ \ \ \ \ \ $(\ddot{\beta_k})_i:= \Bigg\{ ^{0 \   \ \ \   \   \textit{for  all} \ \ i \leq l }
_{(\beta_k)_i   \ \  \textit{for  all} \ \ i > l.}$\\Also, \\

 $(\dot{\alpha})_i:= \Bigg\{ ^{\alpha_i \  \ \ \ \textit{for all} \ \ i \leq l }
_{0   \ \ \ \   \  \ \textit{for all} \ \ i > l}$  \  \  \ \ \ \ \ \ and \ \ \ \ \ \ \ \ \ \ \ \ \ $(\ddot{\alpha})_i:= \Bigg\{ ^{0 \  \ \ \ \textit{for  all} \ \ i \leq l }
_{\alpha_i   \ \  \textit{for all} \ \ i > l.}$\\

Thus $\ddot{\alpha}$ and $\ddot{\beta_k}$ belong to $\prod_{i \in \mathbb{N}} \mathbb{Q}_{\geq0}$. Note that $\alpha=\ddot{\alpha}+\dot{\alpha}$. The same thing holds for $\beta_i$. Also, $\dot{\alpha}$ and   $\dot{\beta_k}$ are almost zero. The data $\{\ddot{\alpha} ,\ddot{ \beta_1}, \ldots , \ddot{ \beta_n}\}$ satisfies in the situation of Corollary \ref {hkey}. So, there exists  a set $\{\gamma_1,\ldots,\gamma_s\} \subseteq \prod_{i \in \mathbb{N}} \mathbb{Q}_{\geq0} $ of $I$-supported and linearly independent elements over $\mathbb{Q}$  with the property  $$\{\ddot{\alpha} ,\ddot{ \beta_1}, \ldots , \ddot{ \beta_n}\} \subseteq \mathbb{Q}_{\geq0} \gamma_1 + \ldots +   \mathbb{Q}_{\geq0} \gamma_s.$$ Furthermore, we can choose $\{\gamma_1,\ldots,\gamma_s\}$ such that $(\gamma_k)_i= 0$ for all $i \leq l$.  Due to $m < n-j$,  one gets $n+1 \leq s$. For each
$ 1 \leq t \leq n$, look at

$$ {\ddot{\alpha} = \sum _{1 \leq k \leq s}q_{k}(0)\gamma_k }
$$$$
\ddot{\beta}_t = \sum _{1 \leq k \leq s}q_{k}(t)\gamma_k  \  \ (\dagger)$$
where  $\{q_{k}(0),q_{k}(t)\}_{1 \leq k \leq s}\subseteq   \mathbb{Q}_{\geq0}$.

\begin{enumerate}
\item[] \textbf{Claim.} There is a set $\{\gamma_1^{\prime},\ldots,\gamma_s^{\prime}\} \in \prod_{i \in \mathbb{N}} \mathbb{Q} $ with $({\gamma_k^{\prime}})_i = 0$ for all $i > l$ of solutions of the following system of equations

$$ \dot{\alpha} = \sum _{1 \leq k \leq s}q_{k}(0){\gamma_k}^{\prime}$$$$ \dot{\beta_t} = \sum _{1 \leq k \leq s}q_{k}(t)\gamma_k^{\prime} \ \ (\ast)$$for all $ 1 \leq t \leq n$.

 Indeed, there are two possibilities.
First, suppose that $n+1 < s$. That is the number of equations is less than the number of indeterminates.
In this case $(\ast)$ has a solution.
Secondly, suppose that $n+1 = s$. Note that the matrix of coefficients of  $(\ast)$ and $(\dagger)$ are the same.
Since $(\dagger)$ has a solution, its matrix of coefficients is invertible. The same thing  holds for  $(\ast)$.
 Thus, in both cases, $(\ast)$ has a solution set $\{\gamma_1^{\prime},\ldots,\gamma_s^{\prime}\}$. Since  $(\dot{\alpha})_i =(\dot{\beta_k})_i=0$  for all $i > l$, we have $({\gamma_k^{\prime}})_i = 0$ for all $i > l$.
This yields the claim.
\end{enumerate}

 Now the set $\Gamma:=\{\gamma_1+\gamma_1^{\prime},\ldots, \gamma_s+\gamma_s^{\prime}\}$ is the set that we search for it.
\end{proof}

\begin{corollary}\label{maincor}
Let $M\subseteq\prod_{i \in \mathbb{N}} \mathbb{Q}$ be the set of all almost non-negative  and $I$-supported elements.
Then the semigroup  $\widetilde{H}:=M\cup\{0\}$   is a direct limit of $\{\mathbb{Q}_{\geq0}^{n_i}:i\in J\}$.
\end{corollary}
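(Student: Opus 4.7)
The strategy is to exhibit $\widetilde{H}$ as the directed union of its ``free'' finitely generated subsemigroups, each of which is the $\mathbb{Q}_{\geq 0}$-cone on a $\mathbb{Q}$-linearly independent set in $M$ and hence semigroup-isomorphic to some $\mathbb{Q}_{\geq 0}^{n}$. The engine is Lemma \ref{mkey}, which absorbs any new $I$-supported element of $M$ into a larger $\mathbb{Q}$-linearly independent family in $M$ whose \emph{nonnegative} rational span contains the original data.

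First, I would bootstrap Lemma \ref{mkey} by induction on cardinality $t$ (in the same spirit as the passage from Lemma \ref{key} to Corollary \ref{hkey}) to obtain: every finite subset $\{\alpha_1,\ldots,\alpha_t\}\subseteq M$ of $I$-supported elements is contained in $H_\Gamma:=\sum_{\gamma\in\Gamma}\mathbb{Q}_{\geq 0}\gamma$ for some finite $\Gamma\subseteq M$ of $I$-supported elements that are linearly independent over $\mathbb{Q}$. Let $J$ denote the collection of such $\Gamma$, partially ordered by $\Gamma\leq\Gamma' \iff H_\Gamma\subseteq H_{\Gamma'}$. Applying the bootstrapped statement to $\Gamma_1\cup\Gamma_2$ yields $\Gamma_3$ dominating both, so $J$ is directed. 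Linear independence over $\mathbb{Q}$ gives a semigroup isomorphism $\mathbb{Q}_{\geq 0}^{|\Gamma|}\cong H_\Gamma$ via $(q_\gamma)\mapsto\sum q_\gamma\gamma$; for $\Gamma\leq\Gamma'$ the inclusion $H_\Gamma\hookrightarrow H_{\Gamma'}$ translates, under these isomorphisms, into a semigroup homomorphism $\mathbb{Q}_{\geq 0}^{|\Gamma|}\to\mathbb{Q}_{\geq 0}^{|\Gamma'|}$ read off from the expression of each $\gamma\in\Gamma$ as a $\mathbb{Q}_{\geq 0}$-combination of elements of $\Gamma'$.

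Finally, applying the bootstrapped statement to singletons shows that $\bigcup_{\Gamma\in J}H_\Gamma$ contains every $I$-supported element of $M$, and $0\in H_\Gamma$ for every $\Gamma$; since each $H_\Gamma\subseteq\widetilde{H}$, this union equals $\widetilde{H}$, and because the transition maps are inclusions inside the ambient $\prod_{\mathbb{N}}\mathbb{Q}$, the direct limit $\vil_{\Gamma\in J} H_\Gamma$ coincides with $\widetilde{H}$. The subtle point I expect to be the main obstacle is verifying that the transition maps really are $\mathbb{Q}_{\geq 0}$-linear (and not merely $\mathbb{Q}$-linear): a straight dimension argument would land us in the category of $\mathbb{Q}$-vector spaces, whereas the corollary asks for a direct limit of the semigroups $\mathbb{Q}_{\geq 0}^{n_i}$. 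This is exactly the strengthening Lemma \ref{mkey} provides, since its conclusion places the original set inside the nonnegative cone on $\Gamma$, not merely inside its $\mathbb{Q}$-span.
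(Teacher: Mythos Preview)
Your proposal is correct and follows essentially the same route as the paper: both take as index set the finite $\mathbb{Q}$-linearly independent subsets $\Gamma\subseteq M$ (equivalently, the cones $H_\Gamma=\sum_{\gamma\in\Gamma}\mathbb{Q}_{\geq 0}\gamma$), order by inclusion of cones, invoke Lemma~\ref{mkey} for directedness, identify $H_\Gamma\cong\mathbb{Q}_{\geq 0}^{|\Gamma|}$ via coordinates, and transport the inclusions to maps between the standard cones. Your explicit inductive bootstrapping of Lemma~\ref{mkey} (absorbing several elements at once, in the spirit of Corollary~\ref{hkey}) is precisely what the paper compresses into the single phrase ``Lemma~\ref{mkey} implies that $\Gamma$ is directed,'' and your closing remark on why the transition maps land in $\mathbb{Q}_{\geq 0}^{n'}$ rather than merely $\mathbb{Q}^{n'}$ makes explicit a point the paper leaves to the reader.
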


\begin{proof}
Look at $$\Gamma:=\{\sum_{i=1}^n \mathbb{Q}_{\geq0}\gamma_i:n\in\mathbb{N}| \{\gamma_i\}\subseteq\widetilde{H}\  \textit{ are } \textit{linearly independent over }\mathbb{Q} \}.$$
For each $\underline{\gamma}:=\sum_{i=1}^n \mathbb{Q}_{\geq0}\gamma_i\in\Gamma$, define $H_{\underline{\gamma}}$ by $\sum_{i=1}^n \mathbb{Q}_{\geq0}\gamma_i.$
Partially ordered $\Gamma$ by means of inclusion. Lemma \ref{mkey} implies that  $\Gamma$ is directed. Thus, $$\widetilde{H}\simeq\varinjlim_{\underline{\gamma}\in \Gamma} H_{\underline{\gamma}}.$$Let $\sum_{i=1}^n r_i\gamma_i\in H_{\underline{\gamma}}$, where $r_i\in\mathbb{Q}_{\geq0}$.
The assignment $\sum_{i=1}^n r_i\gamma_i\mapsto(r_1,\ldots,r_n)$ induces an isomorphism $\varphi_{\underline{\gamma}}:H_{\underline{\gamma}}\to\mathbb{Q}_{\geq0}^n$ of semigroups. Let $\underline{\gamma}\leq \underline{\gamma}'$
and $\rho_{\underline{\gamma},\underline{\gamma}'}:H_{\underline{\gamma}}\to H_{\underline{\gamma}'}$ be the natural inclusion.
Define $\psi_{n,n'}:\mathbb{Q}_{\geq0}^n\to \mathbb{Q}_{\geq0}^{n'}$ by $\varphi_{\underline{\gamma}'}\rho_{\underline{\gamma},\underline{\gamma}'}\varphi_{\underline{\gamma}}^{-1}$.
Then the direct limit of the direct system $\{\mathbb{Q}_{\geq0}^{\ell_n},\psi_{n,n'}\}$ is $\widetilde{H}$.
\end{proof}

\section{A toroidal direct system }

Our main result in this section is Theorem \ref {prod}.
We need several auxiliary lemmas.
We begin this section by recalling the following definition. Our references are \cite{BG}, \cite{BG1} and \cite{Gi}.
Also, \cite{BGT} contains many homological properties of semigroups.

\begin{definition}\label{deff}
Let $C\subseteq \mathbb{Z}^n$ be a semigroup and $k$ a field.
\begin{enumerate}
\item[(i)] Recall that  $k[C]$  is  the vector space $k^{(C)}$. Denote the basis element of $k[C]$ which corresponds to
$ c\in C$ by $ X^ c$. This monomial notation is suggested by the fact that $k[C]$ carries a natural multiplication whose table is given by $X^c X^{c^\prime}:= X^{c+c^\prime}$.
\item[(ii)] Recall that $C$ is\emph{ positive} if there is not any invertible element in $C$.
\item[(iii)] Recall that $C\subseteq \mathbb{Z}^n$ is called \emph{normal},
if whenever $c,c^\prime \in C$ and there is a positive integer $m$ such that $m( c-c^\prime) \in C$, then $ c-c^\prime \in C$.
\item[(iv)] Recall that a subsemigroup extension $C \subseteq \widetilde{C} $ is called \emph{full}, if whenever $h,h^\prime \in C$ and
$ h-h^\prime \in \widetilde{C}$ then  $h-h^\prime \in C$.
\item[(v)]Let $H \subseteq \mathbb{Q}^n$ be a $\mathbb{Q}_{\geq0}$-semigroup, i.e., a semigroup that is closed under scaler multiplication by $\mathbb{Q}_{\geq0}$. Recall that $H$ \emph{has no line}, if there is no nonzero vector in $H$ whose additive inverse is in $H$.
\end{enumerate}
\end{definition}

The following result plays an essential role in this paper.

\begin{lemma}\label{em}(see \cite{H1})
Let $V$ be a finite-dimensional vector space over $\mathbb{Q}$, $C\subseteq V$ is a finitely
generated $\mathbb{Q}_{\geq0}$-subsemigroup and $x\in V\setminus C$.\begin{enumerate}
\item[(i)]
Then there exists a linear
functional that is nonnegative on $C$ and negative on $x$.
\item[(ii)]If $C$ contains no line, one can choose $L$ so that it is positive on all nonzero elements of $C$.
\item[(iii)]
Set $C^\ast := \{f \in \Hom(V,\mathbb{Q})| f(C) \geq 0\}$.
Then $C^\ast$ is a finitely generated $\mathbb{Q}^+$-semigroup.
\end{enumerate}
\end{lemma}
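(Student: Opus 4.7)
The plan is to argue by classical convex geometry for rational polyhedral cones, since the hypothesis that $C$ is a finitely generated $\mathbb{Q}_{\geq0}$-subsemigroup of $V$ says exactly that $C = \mathbb{Q}_{\geq0}v_1+\cdots+\mathbb{Q}_{\geq0}v_r$ for some $v_1,\ldots,v_r\in V$. In particular $C$ is a closed convex cone (after extending scalars to $\mathbb{R}$), and everything reduces to the Minkowski--Weyl duality for finitely generated rational cones.

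For part (i), I would apply the separating hyperplane theorem to the disjoint closed convex set $C_{\mathbb{R}}:=C\otimes_{\mathbb{Q}}\mathbb{R}$ and the point $x$, producing an $\mathbb{R}$-linear functional $L_{\mathbb{R}}$ with $L_{\mathbb{R}}(c)\geq 0$ for all $c\in C$ and $L_{\mathbb{R}}(x)<0$. To descend to $\mathbb{Q}$, I would observe that the condition on a functional $L\in\Hom(V,\mathbb{Q})$ to separate $x$ from $C$ amounts to the rational system $L(v_i)\geq 0$ for $i=1,\ldots,r$ and $L(x)<0$, which is a finite system of strict/weak rational linear inequalities; non-emptiness of its real solution set forces non-emptiness of its rational solution set by density, since the solution set is a relatively open rational polyhedron.

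For part (ii), I would use that ``no line'' means $C\cap(-C)=\{0\}$, i.e.\ $C$ is pointed. For each nonzero generator $v_j$ of $C$, apply (i) to a point of the form $-v_j$ (which is not in $C$ because $C$ is pointed) to obtain a rational linear functional $L_j$ with $L_j(v_j)>0$ and $L_j\geq 0$ on $C$. Then $L:=L_1+\cdots+L_r$ is nonnegative on $C$ and strictly positive on each generator $v_j$. Since every nonzero element of $C$ is a nonnegative rational combination of the $v_j$'s with at least one positive coefficient, $L$ is strictly positive on $C\setminus\{0\}$.

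For part (iii), the statement is the non-trivial direction of Farkas--Minkowski--Weyl: the dual $C^{\ast}$ of a finitely generated rational cone is itself finitely generated. The approach I would take is induction on $\dim V$ and on the number $r$ of generators, together with the Fourier--Motzkin elimination procedure: writing $C=\mathbb{Q}_{\geq0}v_r + C'$ with $C'$ generated by $r-1$ vectors, one computes $C^{\ast}=\{f\in (C')^{\ast}:f(v_r)\geq 0\}$, and by induction $(C')^{\ast}$ is finitely generated, after which intersection with the half-space $\{f(v_r)\geq 0\}$ preserves finite generation by the standard elimination step (combining pairs of generators on opposite sides of the hyperplane $f(v_r)=0$). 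The main obstacle I expect is the bookkeeping in this last step, that is, verifying that Fourier--Motzkin elimination stays within the rational category and terminates in finitely many generators; once that is in place, the rest is routine.
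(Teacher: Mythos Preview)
The paper does not give a proof of this lemma; it simply attributes it to Hochster \cite{H1}. Your outline is the standard convex-geometric route (separating hyperplanes, rational density, and Farkas--Minkowski--Weyl duality via Fourier--Motzkin elimination) and is correct in substance.

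One small point on part (ii): as phrased in the lemma, the symbol $L$ refers back to the separating functional of (i), so the full claim is that one can arrange simultaneously $L(x)<0$ and $L>0$ on $C\setminus\{0\}$. Your construction delivers only the second condition. The repair is immediate: take the $L_0$ produced by (i) and your strictly positive functional $L':=\sum_j L_j$, and set $L:=L_0+\epsilon L'$ for any rational $\epsilon>0$ small enough that $L_0(x)+\epsilon L'(x)<0$; then $L\geq \epsilon L'>0$ on $C\setminus\{0\}$ and $L(x)<0$. In the paper's actual applications (e.g.\ the proof of Lemma~\ref{hoc}) only the weaker, standalone reading of (ii) is ever invoked, so this distinction is harmless there.

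A second, equally minor point in (i): before invoking the real separating-hyperplane theorem you are implicitly using that $x\notin C_{\mathbb{R}}$, not merely $x\notin C$. This is true because $C_{\mathbb{R}}\cap V=C$ for a rational polyhedral cone, by exactly the same ``rational system has a real solution, hence a rational one'' argument you use a moment later; it is worth stating explicitly.
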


\begin{example}\label{rem}
The finitely generated assumption of $C$ in Lemma \ref{em} is needed.
Indeed, look at $$H:=\{(a,b)\in\mathbb{ N}^2:a/b>1\}.$$ Then $H$
is normal. The cone it generated is $$C:=\mathbb{Q}_{\geq0}H=\{(a,b)\in\mathbb{Q}^2_{\geq0}:a/b>1\}.$$
Look at $x:=(2,2)$ and let $f:\mathbb{ Q}^2\to \mathbb{ Q}$ be any nonzero linear function that is nonnegative on $C$.
Note that $x\notin C$ and $f$ is continuous via the standard topology induced from $\mathbb{R}^2, \mathbb{R}^1$.
Define $a_n:=(2+1/n,2)$. Then $a_n\in C$ and $ \underset{n\to \infty}{\lim}a_n=x$. So
$$f(x)=\underset{n\to \infty}{\lim} f(a_n)\geq 0.$$
\end{example}

\begin{remark}\label{rem1}
Adopt the notation of Example \ref{rem}. One can prove that $C$ has not a minimal generating set as a $\mathbb{Q}_{\geq0}$-semigroup.
\end{remark}

We state the following result to demonstrate our interest on $\prod_{\mathbb{N}}\mathbb{Q}_{\geq0}$ and for possible application in the further.
\begin{remark}\label{hoc}
Let  $H\subseteq \mathbb{Z}^n$ be a positive and normal semigroup. There is an infinite index set $J$ such that
 $H$ is full in $\prod_{J} (\bigoplus_{\mathbb{N}}\mathbb{Q}\oplus\prod_{\mathbb{N}}\mathbb{Q}_{\geq0})$. Indeed, we assume that $H -H = \mathbb{Z}^{n_0}$. Define $C:=\mathbb{Q}_{\geq0}H\subseteq\mathbb{Q}^{n_0}$ as the  $\mathbb{Q}_{\geq0}$-subsemigroup generated
by $H$. Look at the vector spaces $V = \mathbb{Q}^{n_0}$ and $V^\ast=
\Hom_\mathbb{Q}(V, \mathbb{Q})$. Clearly, $C$ is a countable set.
Consider a chain $$C_1\subseteq C_2 \subseteq \ldots \subseteq C$$ such that
$C_i$ is  nonzero finitely generated  $\mathbb{Q}_{\geq0}$-subsemigroup of $C$ and
$\bigcup C_i=C$.
Now we define $\overline{C} \subseteq\prod_{\mathbb{N}} V^\ast$ by $$\overline{C}:=\{(f_n)_{n\in\mathbb{N}}|f_n\in V^\ast \textit{ and } f_n(C_n)\geq 0 \textit{ for all } n\}.$$
One may find that $\overline{C}$ is closed under sum and scaler multiplication by  $\mathbb{Q}_{\geq0}$.
Let $\{\textmd{a}^j:j\in J\}$ be the set $\overline{C}$ and denote the n-th component
of $\textmd{a}^j$ by $\textmd{a}^j_n$. Note that  $J$ is an infinite index set.
Fix $h\in H$ and $j\in J$. Then $h\in C$ and so $h\in C_n$ for some $n$.
Hence $h\in C_m$ for all $m\geq n$. This means that
$\textmd{a}^j(h)>0$ for all $m\geq n$. It turns out that
$$(\textmd{a}^j_n(h))_{n\in\mathbb{N}}\in(\bigoplus_{\mathbb{N}}\mathbb{Q}\oplus\prod_{\mathbb{N}}\mathbb{Q}_{\geq0}).$$
 So, the assignment $h \mapsto (\textmd{a}^j(h))_{j\in J}$, defines a map $$\varphi:H\longrightarrow \prod_{J} (\bigoplus_{\mathbb{N}}\mathbb{Q}\oplus\prod_{\mathbb{N}}\mathbb{Q}_{\geq0}).$$
We  show that
$\varphi$ is a full embedding.
To see this, let $x\neq y $ be two distinct  elements of  $H$.
Without loss of generality, we can assume
that $x-y\notin H$
 since $H$ is  positive.
Also, we can assume
that $x-y\notin C$
 since $H$ is normal.
 Then $x-y\notin C_n$ for all $n$.
In view of Lemma \ref{em}, there is a linear functional $f_n$ nonnegative
on $C_n$ and negative on $x-y$. Hence,
$j:=(f_n)_{n\in\mathbb{N}}\in \overline{C}$, i.e., we find
 $j$ and $n$ such that $\textmd{a}^j_n(x-y)< 0$. In particular,
$\textmd{a}^j_n(x)\neq \textmd{a}^j_n(y)$, i.e., $\varphi$ is injective.

We finish the proof by showing that  $\im (\varphi)\subseteq \prod (\bigoplus\mathbb{Q}\oplus\prod\mathbb{Q}_{\geq0})$ is  full.
Take $x,y\in H$ be such that $\varphi(x) -\varphi(y) \in \prod_{J} (\bigoplus_{\mathbb{N}}\mathbb{Q}\oplus\prod_{\mathbb{N}}\mathbb{Q}_{\geq0})$.
In order to show
$x-y\in H$, its enough to prove that  $x-y\in C$, because $H$ is normal. Suppose on the contrary that
$x-y\notin C$. Hence,  $x-y\notin C_n$ for all $n$. In the light of Lemma \ref{em},
there is a linear functional $f_n$
that is nonnegative on $C_n$ and negative on $x-y$. By definition of
$\overline{C}$, $j:=(f_n)_{n\in\mathbb{N}}\in \overline{C}$.
 Look at $j-$th component of $\varphi(x) -\varphi(y)$. It is $$(f_n(x-y))_{n\in\mathbb{N}}\in(\bigoplus_{\mathbb{N}}\mathbb{Q}\oplus\prod_{\mathbb{N}}\mathbb{Q}_{\geq0}).$$
So $f_i(x-y)\geq0$ for all but finitely many $i$, a contradiction that we search for it.
\end{remark}

\begin{lemma}\label{ha}
Let $C$ be a   normal submonoid  of  $\mathbb{Z}^n$. Then there is a direct system $\{(C_\gamma , f_{\gamma \delta })\}$ of finitely generated  normal submonoids  of $C$ such that  $C=\varinjlim_{\gamma \in \Gamma}C_\gamma$,  where $f_{\gamma \delta }: C_\gamma \to  C_\delta$ is the inclusion map for $ \gamma,\delta \in \Gamma$ with $ \gamma \leq \delta$.
\end{lemma}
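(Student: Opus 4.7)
The plan is to show that every finite subset $S\subseteq C$ is contained in a finitely generated normal submonoid of $C$; then the family $\Gamma$ of all such submonoids, ordered by inclusion, will be directed, with union $C$, giving the desired direct system.

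Given $S\subseteq C$ finite, let $C_S:=\langle S\rangle$ be the submonoid of $\mathbb{Z}^n$ generated by $S$. This is finitely generated and contained in $C$ but need not be normal. I would therefore pass to its saturation inside its group of differences:
\[
\widetilde{C_S}:=\{x\in\mathrm{gp}(C_S)\mid mx\in C_S\text{ for some integer }m\geq1\}.
\]
Two facts about $\widetilde{C_S}$ are needed. First, $\widetilde{C_S}$ is finitely generated: the rational cone $\mathbb{Q}_{\geq0}C_S$ is finitely generated (it is spanned by $S$), so by Gordan's lemma applied to the lattice $\mathrm{gp}(C_S)$ the intersection $\mathbb{Q}_{\geq0}C_S\cap\mathrm{gp}(C_S)$ is a finitely generated monoid, and this intersection equals $\widetilde{C_S}$. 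Second, $\widetilde{C_S}$ is normal in the sense of Definition \ref{deff}(iii), directly from the definition of saturation.

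Next, I would use normality of $C$ to verify $\widetilde{C_S}\subseteq C$. If $x\in\widetilde{C_S}$, then $x\in\mathrm{gp}(C_S)\subseteq\mathrm{gp}(C)$, so we may write $x=c-c'$ with $c,c'\in C$, and $mx\in C_S\subseteq C$ for some $m\geq1$. Then $m(c-c')\in C$, and normality of $C$ forces $c-c'\in C$, i.e.\ $x\in C$. Hence $\widetilde{C_S}$ is a finitely generated normal submonoid of $C$ containing $S$.

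Now let $\Gamma$ be the poset of all finitely generated normal submonoids of $C$, ordered by inclusion, with $f_{\gamma\delta}:C_\gamma\hookrightarrow C_\delta$ the inclusion whenever $C_\gamma\subseteq C_\delta$. Directedness is immediate from the preceding paragraph applied to $S$ a union of finite generating sets of any two given members of $\Gamma$. Likewise, every $c\in C$ lies in $\widetilde{\langle c\rangle}\in\Gamma$, so $C=\bigcup_{\gamma\in\Gamma}C_\gamma$. Since a direct limit of monoids with inclusion maps is just the union, we conclude $C=\varinjlim_{\gamma\in\Gamma}C_\gamma$. The only real obstacle is the finite generation of the saturation, which I would dispatch by Gordan's lemma as above; the rest is a bookkeeping exercise.
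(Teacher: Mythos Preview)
Your argument is correct: the saturation $\widetilde{C_S}$ is a finitely generated normal submonoid of $C$ by Gordan's lemma and the normality of $C$, and the family of all such submonoids is directed with union $C$. The paper itself gives no proof here, merely citing \cite[Lemma 2.2]{ADT}; your saturation-plus-Gordan approach is the standard one and is almost certainly what that reference contains.
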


\begin{proof}
This is in \cite[Lemma 2.2]{ADT}.
\end{proof}

Recall that a set $M\subseteq\prod_{\mathbb{N}}\mathbb{Q}$  is called \textit{almost positive}, if it consists of all $\beta \in \prod_{\mathbb{N}}\mathbb{Q}$ such that only finitely many coordinates of $\beta$ is negative.

\begin{lemma}\label{hoc}
Let  $H\subseteq \mathbb{Z}^n$ be a positive and normal semigroup
and let $\{h_1,\ldots,h_s\}$ be a finite subset of  $H$. The following holds.
\begin{enumerate}
\item[(i)]There is an almost positive $\mathbb{Q}^+$-subsemigroup $M\subseteq \prod_{\mathbb{N}}\mathbb{Q}$  and a
full embedding $\varphi:H\to M$.
\item[(ii)]There is an infinite set $I\subseteq\mathbb{N}$ such that $\varphi(h_k)_i>0$ for all $i \in I$ and $1 \leq k \leq s$.\end{enumerate}
\end{lemma}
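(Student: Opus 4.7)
The plan is to mirror the construction in Remark \ref{hoc}, but to replace the potentially uncountable index set used there with a countable one (by fixing finite generating sets of the dual cones at each stage), and at the same time to pre-load into the index sequence enough strictly positive functionals to solve part (ii) for free.

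To begin, Lemma \ref{ha} allows me to write $H=\bigcup_{n\in\mathbb{N}}H_n$ as a countable ascending union of finitely generated normal submonoids, which is available because $H\subseteq \mathbb{Z}^n$ is itself countable. Set $V=\mathbb{Q}(H-H)$ and $C_n=\mathbb{Q}_{\geq 0}H_n\subseteq V$. Because $H$ is positive and normal, the ambient cone $C=\bigcup_n C_n$ contains no line, so no $C_n$ does either. Lemma \ref{em}(iii) then provides a finite generating set $\{f_{n,1},\ldots,f_{n,k_n}\}$ of the dual cone $C_n^\ast$, and Lemma \ref{em}(ii), applied to the line-free cone $C_n$, provides an extra functional $g_n\in C_n^\ast$ which is strictly positive on $C_n\setminus\{0\}$; I adjoin it to that generating set. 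Enumerating the countable family $\{f_{n,j}\}_{n,j}\cup\{g_n\}_n$ as a single sequence $(\phi_i)_{i\in\mathbb{N}}$, I define $\varphi:H\to\prod_{\mathbb{N}}\mathbb{Q}$ by $\varphi(h)_i=\phi_i(h)$.

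For (i), I take $M$ to be the $\mathbb{Q}^+$-subsemigroup of all almost non-negative elements of $\prod_{\mathbb{N}}\mathbb{Q}$. Given $h\in H_n$, every functional drawn from $C_m^\ast$ with $m\geq n$ is non-negative on $h$, so only the finitely many functionals coming from $C_1^\ast,\ldots,C_{n-1}^\ast$ can contribute a negative coordinate; hence $\varphi(h)\in M$. Fullness is the crux: if $x,y\in H$ and $\varphi(x)-\varphi(y)\in M$, I argue by contradiction that $x-y\in H$. Normality of $H$ reduces this to showing $x-y\in C$; if not, then $x-y\notin C_n$ for every $n$, and Lemma \ref{em}(i) produces a functional $f\in C_n^\ast$ with $f(x-y)<0$. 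Writing $f$ as a non-negative rational combination of the generators $f_{n,j}$ forces $f_{n,j}(x-y)<0$ for at least one $j$, so every stage $n$ contributes a negative coordinate to $\varphi(x)-\varphi(y)$, contradicting almost positivity. Injectivity then follows by applying fullness to both $\varphi(x)-\varphi(y)$ and $\varphi(y)-\varphi(x)$ together with positivity of $H$. For (ii), choose $n_0$ with $\{h_1,\ldots,h_s\}\subseteq H_{n_0}\setminus\{0\}$; for every $n\geq n_0$ the functional $g_n$ is strictly positive on $C_n\setminus\{0\}$, so $g_n(h_k)>0$ for all $k$, and the set $I$ of indices corresponding to $\{g_n:n\geq n_0\}$ is the required infinite set.

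I expect the fullness step in (i) to be the main obstacle. One must produce negative coordinates of $\varphi(x)-\varphi(y)$ at infinitely many indices, which requires invoking Lemma \ref{em}(i) separately at every stage $n$ and then translating an abstract witness in $C_n^\ast$ into one of the fixed generators $f_{n,j}$. Fixing finite generating sets of the $C_n^\ast$ in advance is exactly what keeps the index set countable while still supplying a generator-level witness at each stage, in contrast to Remark \ref{hoc}, where one parametrizes by all of $\overline{C}$ and thereby loses countability.
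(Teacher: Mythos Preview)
Your proposal is correct and follows essentially the same route as the paper's own proof: write $H=\bigcup_n H_n$ via Lemma~\ref{ha}, take finite generating sets of the dual cones $C_n^\ast$ together with a strictly positive functional $g_n$ on each $C_n$, assemble these into a sequence defining $\varphi$, and verify fullness by producing at each stage a generator of $C_n^\ast$ that is negative on $x-y$. The only cosmetic difference is that you deduce injectivity from fullness plus positivity, whereas the paper argues injectivity directly; both are immediate.
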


\begin{proof}
Denote the group that $H$ generates by $H -H$ and suppose $H -H= \mathbb{Z}^{n_0}$. By Lemma \ref{ha}, $H = \bigcup_{i \in \mathbb{N}}H_i$ where $H_i$ is a finitely generated normal positive semigroup. Also, $H_i\subseteq H_{i+1}$. Without loss of the generality, we assume that $H_i -H_i = \mathbb{Z}^{n_0}$. Define $C:=\mathbb{Q}_{\geq0}H\subseteq\mathbb{Q}^{n_0}$ and $C_i:=\mathbb{Q}_{\geq0}H_i\subseteq\mathbb{Q}^{n_0}$  as the  $\mathbb{Q}_{\geq0}$-subsemigroups generated
by $H$ and $H_i$, respectively. Then $C_i$ is  a finitely generated $\mathbb{Q}^+$-semigroup with no line and   $C_i\subseteq C_{i+1}$.   Look at the vector space $V := \mathbb{Q}^{n_0}$ and its dual space $V^\ast:=
\Hom_\mathbb{Q}(V, \mathbb{Q})$. Set $C_i^\ast := \{f \in V^\ast| f(C_i) \geq 0\}$.
 In view of Lemma \ref{em}, $C_i^\ast$ is a finitely generated $\mathbb{Q}^+$-semigroup. Let $\{f_1^i,\ldots,f_{n_i}^i\}$ be a set of generators for $C_i^\ast$ as a $\mathbb{Q}^+$-semigroup. Also, take $g^i \in C_i^\ast$ such that $g^i(C_i) >0$. Such a linear functional exists by Lemma \ref{em}.
So, the assignment $$h \mapsto (g^1(h),f_1^1(h),\ldots,f_{n_1}^1(h);g^2(h),f_1^2(h),\ldots,f_{n_2}^2(h);\ldots),$$ defines a map $$\psi:H\longrightarrow \prod_{\mathbb{N}}\mathbb{Q}.$$

 We are ready to prove the Lemma.

(i): Denote the set of all $\beta \in \prod_{\mathbb{N}}\mathbb{Q}$ such that only finitely many coordinates of $\beta$ are negative by $M$.
 Clearly, $\psi(H) \subseteq M$. We now apply an idea from \cite{H1},
to show that the map
$\varphi:H\to M$  induced by $\psi$ is a full embedding.

Let $x\neq y $ be two distinct  elements of  $H$.
Without loss of the generality, we can assume
that $x-y\notin C$
 since $C$ has no any lines.
 Then $x-y\notin C_m$ for all $m$.
In view of Lemma \ref{em}, there is a linear functional $f_m$ nonnegative
on $C_m$ and negative on $x-y$. Hence, $f_m\notin C_i^\ast$. This means that
 $f_j^m(x-y)<0$ for some $1 \leq j \leq n_m$. Thus $\varphi(x) \neq \varphi(y)$ and so $\varphi:H\to M$ is an embedding.

 Let $x,y \in H$ be such that $\varphi(x)-\varphi(y) \in M$. Suppose on the contrary that $x-y \notin H$. Then for each $i$, there  exists
 $1 \leq j \leq n_i$ such that
  $f_j^i \in C_i^\ast$ and  $f_j^i(x-y) <0$. This implies that $\varphi(x)-\varphi(y) \notin M$, which is a contradiction. Therefore, in view of Definition \ref{deff}(iv), $\varphi(H)$ is full in $M$.

(ii): Let $i$ be such that $\{h_1,\ldots,h_s\}\subseteq C_i$. Keep in mind that $C_m\subset C_{m+1}$ for all $m$.
Then $\{g^j(h):j\geq i\}$ are components of $\varphi(h)$ that are nonzero for all $h\in\{h_1,\ldots,h_s\}$.
\end{proof}

\begin{lemma}\label{dir} Let $k$ be a field and $H$ be a full subsemigroup of  a semigroup $D$.  Then $k[H]$ is a direct summand of $k[D]$.
\end{lemma}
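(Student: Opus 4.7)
The plan is to construct an explicit $k[H]$-linear retraction of the inclusion $\iota\colon k[H]\hookrightarrow k[D]$. Since $k[D]=\bigoplus_{d\in D}kX^d$ as a $k$-vector space, I would define
\[
\pi\colon k[D]\longrightarrow k[H],\qquad \pi(X^d):=\begin{cases}X^d & \text{if } d\in H,\\ 0 & \text{if } d\in D\setminus H,\end{cases}
\]
and extend $k$-linearly. Directly from the definition, $\pi\circ\iota=\operatorname{id}_{k[H]}$, so once $\pi$ is shown to be a morphism of $k[H]$-modules, the splitting $k[D]=k[H]\oplus\ker\pi$ with $\ker\pi=\bigoplus_{d\in D\setminus H}kX^d$ follows immediately.

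The heart of the argument — and the only place fullness is used — is checking that $\pi(X^h\cdot X^d)=X^h\cdot\pi(X^d)$ for every $h\in H$ and every $d\in D$. If $d\in H$, both sides equal $X^{h+d}$ because $h+d\in H$. If $d\notin H$, the right-hand side is $0$, so I must show $h+d\notin H$. Suppose for contradiction that $h+d\in H$. Then both $h$ and $h+d$ lie in $H$ and their difference $(h+d)-h=d$ lies in $D$; the fullness of the extension $H\subseteq D$ (Definition \ref{deff}(iv)) then forces $d\in H$, contradicting the hypothesis on $d$.

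Extending by $k$-linearity handles arbitrary elements of $k[H]$ and $k[D]$, so $\pi$ is $k[H]$-linear and provides the desired splitting. I do not anticipate any real obstacle here: the whole content of the lemma is the compatibility check in the second paragraph, and fullness is tailored precisely to make that check go through.
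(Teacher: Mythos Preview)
Your proof is correct and is exactly the standard argument the paper has in mind: the paper's own ``proof'' merely states that it is similar to the affine case and leaves it to the reader, and your explicit retraction $\pi$ together with the fullness check is precisely that affine-case argument.
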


\begin{proof}
The proof is similar to the affine case and we leave it to the reader.
\end{proof}

\begin{definition}Let $R$ and $S$ be two polynomial rings over a filed and
$\varphi:R \to S$ a ring homomorphism. Then $\varphi$ is called a toric map if
it sends a monomial to a  monomial.
\end{definition}

\begin{theorem}\label{prod} Let $k$ be a field and $H\subseteq \mathbb{Z}^n$ be a positive and normal semigroup.  Then there is a direct system $\{A_n:n\in \mathbb{N}\}$  with the following properties:
\begin{enumerate}
\item[(i)] $A_n$ is a  Noetherian polynomial ring over $k$   for all  $n\in \mathbb{N}$.
\item[(ii)] $A_n \to A_m$ is toric for all $n\leq m$.
\item[(iii)] $k[H]$ is a direct summand of ${\varinjlim}_{n\in \mathbb{N}} A_n$.
\end{enumerate}
\end{theorem}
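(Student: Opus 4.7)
The plan is to exhibit $k[H]$ as a direct summand of a countable direct limit of Noetherian polynomial rings with toric transitions. The three main tools are Lemma \ref{hoc} (full embedding into an almost positive product), Lemma \ref{mkey} (extending finite $\mathbb{Q}$-linearly independent $I$-supported sets inside $\prod\mathbb{Q}_{\geq0}$), and Lemma \ref{dir} (direct-summand status from a full extension).

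First I would invoke Lemma \ref{hoc}(i) to fix a full embedding $\varphi\colon H\hookrightarrow M$, where $M\subseteq\prod_{i\in\mathbb{N}}\mathbb{Q}$ is the almost positive subsemigroup. Since $H\subseteq\mathbb{Z}^n$ is countable, enumerate $H\setminus\{0\}=\{h_1,h_2,\ldots\}$ and apply Lemma \ref{hoc}(ii) to each finite initial segment $\{h_1,\ldots,h_s\}$; an inspection of that proof (the infinite set produced is ``indices of $g^j$ with $j\geq i_s$'' for some $i_s$, which may be taken nondecreasing in $s$) yields nested infinite sets $I_1\supseteq I_2\supseteq\cdots$ with $\varphi(h_k)_i>0$ for every $i\in I_s$ and every $1\leq k\leq s$.

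Next, feed Lemma \ref{mkey} into an induction on $s$ to build finite $I_s$-supported $\mathbb{Q}$-linearly independent sets $\Gamma_s\subseteq\prod_{i\in\mathbb{N}}\mathbb{Q}_{\geq0}$ such that, setting $H_s:=\sum_{\gamma\in\Gamma_s}\mathbb{Q}_{\geq0}\gamma\cong\mathbb{Q}_{\geq0}^{|\Gamma_s|}$, one has $H_1\subseteq H_2\subseteq\cdots$ and $\{\varphi(h_1),\ldots,\varphi(h_s)\}\subseteq H_s$; at step $s{+}1$ one applies Lemma \ref{mkey} to the data $(\Gamma_s,\varphi(h_{s+1}))$, which is legal because $\Gamma_s$ is automatically $I_{s+1}$-supported (using $I_s\supseteq I_{s+1}$) and $\varphi(h_{s+1})$ is $I_{s+1}$-supported by construction. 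Let $D_\infty:=\bigcup_s H_s$. Then $\varphi(H)\subseteq D_\infty\subseteq M$, and because fullness of $\varphi\colon H\to M$ is inherited by any intermediate subsemigroup of $M$, the induced embedding $\varphi\colon H\hookrightarrow D_\infty$ is full. Lemma \ref{dir} then gives $k[H]$ as a direct summand of $k[D_\infty]=\varinjlim_s k[\mathbb{Q}_{\geq0}^{|\Gamma_s|}]$, and each transition $H_s\hookrightarrow H_{s+1}$ sends a generator $\gamma\in\Gamma_s\subseteq H_{s+1}$ to a $\mathbb{Q}_{\geq0}$-combination of $\Gamma_{s+1}$, hence is toric on the semigroup-ring side.

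Finally, to land in Noetherian polynomial rings rather than $k[\mathbb{Q}_{\geq0}^n]$, write $\mathbb{Q}_{\geq0}=\varinjlim_N\tfrac{1}{N!}\mathbb{N}_0\cong\varinjlim_N\mathbb{N}_0$ with the transitions realized as integer scalings, which are toric. This refines each $k[\mathbb{Q}_{\geq0}^{|\Gamma_s|}]$ as a directed union of polynomial rings $k[x_1,\ldots,x_{|\Gamma_s|}]$; interleaving the two countable layers (over $s$ and $N$) delivers the desired $\mathbb{N}$-indexed direct system $\{A_n\}$ of Noetherian polynomial rings with toric transitions, whose direct limit is $k[D_\infty]$. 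The hard part I expect is precisely the upgrade from Lemma \ref{hoc}(ii)---which only covers finite subsets of $H$---to a single structure handling all of $H$ at once: the nested-$I_s$ device combined with Lemma \ref{mkey} is the decisive move, after which fullness, the polynomial-ring refinement, and the interleaving into one $\mathbb{N}$-indexed chain are essentially bookkeeping.
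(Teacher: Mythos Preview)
Your proposal is correct and follows essentially the same route as the paper: use Lemma~\ref{hoc} to fully embed $H$ in the almost-positive product $M$, then iterate Lemma~\ref{mkey} to build a nested chain of free $\mathbb{Q}_{\geq0}$-semigroups whose union contains $\varphi(H)$ fully, and conclude via Lemma~\ref{dir}. The only substantive difference is in passing from $\mathbb{Q}_{\geq0}$-cones to $\mathbb{N}_0$-monoids: the paper rescales the generators $\gamma_i^n$ at each step so that the finitely many elements $\{h_1,\ldots,h_n\}\cup\Gamma^{n-1}$ already lie in $\mathbb{N}\gamma_1^n+\cdots+\mathbb{N}\gamma_{s_n}^n$, landing directly in Noetherian polynomial rings, whereas you first build the $\mathbb{Q}_{\geq0}$-chain and then refine via $\mathbb{Q}_{\geq0}=\varinjlim_N\tfrac{1}{N!}\mathbb{N}_0$ with a final interleaving; both are valid, the paper's version saving the bookkeeping of the double limit. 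Your explicit handling of the nested sets $I_1\supseteq I_2\supseteq\cdots$ (needed so that $\Gamma_s$ remains $I_{s+1}$-supported at the next invocation of Lemma~\ref{mkey}) is in fact more careful than the paper, which leaves this point implicit.
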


\begin{proof}
First, we remark that $H$ is countable. Thus, it has a countable generating set $\{h_i|i \in \mathbb{N}\}$. By Lemma \ref{hoc} (i), there is a full embedding $\varphi:H\hookrightarrow M\subset\prod_{\mathbb{N}}\mathbb{Q}$.
For any finite subset  $X$ of $H$, by applying Lemma \ref{hoc} (ii), there is an infinite set $I\subseteq\mathbb{N}$ such that $X$
consists of $I$-supported elements, when we regard $X$ as a subset of $M$.
So, we are in the situation of Lemma \ref{mkey}.

Fix $n\in \mathbb{N}$.
 In view of Lemma \ref{mkey}, there is a set $\Gamma:=\{\gamma_1^n,\ldots,\gamma_{s_n}^n\} \subseteq M$ of $\mathbb{Q}$-linearly independent elements and that $$\{h_1,\ldots,h_n\},\{\gamma_1^{n-1},\ldots,\gamma_{s_{n-1}}^{n-1}\} \subseteq \mathbb{Q}^+\gamma_1^n+ \ldots +\mathbb{Q}^+\gamma_{s_n}^n.$$ Since $\{h_1,\ldots,h_n\}, \{\gamma_1^{n-1},\ldots,\gamma_{s_{n-1}}^{n-1}\}$ are  finite, we can assume that $$\{h_1,\ldots,h_n\}, \{\gamma_1^{n-1},\ldots,\gamma_{s_{n-1}}^{n-1}\} \subseteq \mathbb{N}\gamma_1^n+ \ldots +\mathbb{N}\gamma_{s_n}^n.$$ Look at $C_n := \mathbb{N}\gamma_1^n+ \ldots +\mathbb{N}\gamma_{s_n}^n$. Hence $H \subseteq \bigcup_{n \in \mathbb{N}}C_n$. This is a full embedding, because
 $\bigcup_{n \in \mathbb{N}}C_n\subseteq M$ and $H \subseteq M$ is full.
Set $A_n:= k[C_n]$ and denote the natural map $A_n\to A_{n+1}$ by $\varphi_{n,n+1}$.

 Now we prove the Theorem.

 \begin{enumerate}
\item[(i)] $A_n$ is a  Noetherian polynomial rings over $k$, since $$C_n= \mathbb{N}\gamma_1^n+ \ldots +\mathbb{N}\gamma_{s_n}^n\simeq   \mathbb{N}^{s_n},$$  by the assignment $$m_1\gamma_1^n+ \ldots +m_{s_n}\gamma_{s_n}^n\longmapsto(m_1,\ldots,m_{s_n}).$$
\item[(ii)] $A_n \to A_{n+1}$ is toric, since $\varphi_{n,n+1}(C_n)\subset C_{n+1}$.
\item[(iii)] $k[H]$ is a direct summand of ${\varinjlim}_{n\in \mathbb{N}} A_n$, since $H \subseteq \bigcup_{n \in \mathbb{N}}C_n$ is a full embedding.
\end{enumerate}
  So,  $\{A_n:n\in \mathbb{N}\}$ is the desirable directed system.
\end{proof}

In the following  we cite a result of Teissier who studied a direct limit of a nested sequence of polynomial subalgebras
with toric maps  with applications on resolution of singularities.

\begin{remark}\label{t}Let $(R,\fm, k)$ be a valuation ring of finite Krull dimension containing $k$ with value map  $v:R\to\Gamma$. The associated graded ring of $R$ with respect to $v$ is
$$gr_v(R):=\bigoplus_{\gamma\in\Gamma} \{x\in R:v(x)\geq\gamma\}/\{x\in R:v(x)>\gamma\}.$$
In view of \cite[Section 4]{Te}, $gr_v(R)$  is a direct limit of a nested sequence of polynomial subalgebras with toric maps.
\end{remark}

\section{ Projective dimension and regular sequence }

In this section we present a criterion of regularity of sequences in the terms of projective dimension, see Theorem \ref{pd2}. We need it in  Theorem \ref{pro}.
 Our reference for combinatorial commutative algebra is \cite{HHI}.
Let $R$ be a ring, $M$ an $R$-module and  $\underline{x} = x_{1}, \ldots,
x_\ell$ be a system of elements of $R$. By $\mathbb{K}_{\bullet}(\underline{x})$,
we mean the Koszul complex of $R$ with respect to $\underline{x}$.
Also, $\pd_R(M)$ denotes the projective dimension of $M$ over $R$.

\begin{question}\label{q}
Let $\fa$ be an ideal of a ring $R$ minimally generated by $n$
elements. Suppose that $\pd(R/ \fa)=n$. Under what conditions
$\fa$  can be generated by a regular sequence?
\end{question}

There are several positive answers inspired by  \cite[Theorem 2.2.8]{BH}:

\begin{remark}
\begin{enumerate}
\item[(i)] Suppose $\fa$ is a parameter sequence of a local ring $R$ and $R$ contains a filed. By  using \textit{The Canonical Element Conjecture},
one can find a positive answer to  Question \ref{q}. For more details, see \cite[1.6.2, 1.6.3]{AI}.
\item[(ii)] Let $\underline{x}$ be a set of generators of $\fa$.
If $H^1(\mathbb{K}_{\bullet}(\underline{x}))$ is a free $R/\fa$-module, then one can find a positive answer to  Question \ref{q}. For more details, see \cite[Proposition 25]{S}.
\end{enumerate}
\end{remark}

\begin{lemma}\label{c}
Let  $A$  be a ring,  $x$ a regular element and $x_1,x_2$ a sequence  of elements of $A$.
If $xx_1,xx_2$ is a regular  sequence, then $x_1,x_2$ is a regular  sequence and
$\pd(A/(x_1,x_2)A)=\pd(A/(xx_1,xx_2)A)$.
\end{lemma}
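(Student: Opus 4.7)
The plan has two steps: first verify by direct cancellation using the regular element $x$ that $x_1, x_2$ is a regular sequence in $A$, and then read off the equality of projective dimensions from the Koszul resolutions attached to regular sequences of length $2$.

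For the regularity, to see that $x_1$ is a non-zero-divisor, suppose $x_1 a = 0$; multiplying by $x$ gives $(xx_1) a = 0$, and since $xx_1$ is the first element of a regular sequence (hence a non-zero-divisor) we conclude $a = 0$. To verify that $x_2$ is a non-zero-divisor on $A/(x_1) A$, suppose $x_2 a = x_1 b$ for some $b \in A$; multiplying by $x$ gives $(xx_2) a = (xx_1) b$, so that the image of $a$ in $A/(xx_1)A$ is annihilated by $xx_2$. Since $xx_2$ is regular on $A/(xx_1) A$ by hypothesis, we obtain $a \in (xx_1) A \subseteq (x_1) A$, as required.

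For the projective dimension equality, the Koszul complex $\mathbb{K}_\bullet(x_1, x_2)$ is a free resolution of length $2$ of $A/(x_1, x_2) A$, so $\pd(A/(x_1, x_2) A) \leq 2$; the same argument applied to the regular sequence $xx_1, xx_2$ gives $\pd(A/(xx_1, xx_2) A) \leq 2$. For the matching lower bound, writing $I$ for either $(x_1, x_2) A$ or $(xx_1, xx_2) A$, the Koszul resolution computes $\Tor^A_2(A/I, A/I)$ as the homology in degree $2$ of $\mathbb{K}_\bullet \otimes_A A/I$, which equals $A/I$ and is nonzero whenever $I$ is a proper ideal. Consequently both projective dimensions equal $2$, giving the claimed equality.

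There is no serious conceptual obstacle; the only point of care is the properness of the ideals in play for the Koszul lower bound on $\pd$, which is automatic in the intended monomial setting where the lemma will be combined with Theorem \ref{pd2}.
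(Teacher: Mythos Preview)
The paper's own proof is the single word ``Clear,'' so there is no approach to compare against; your argument is a correct fleshing-out. The cancellation steps for regularity of $x_1,x_2$ are right, and the Koszul/$\Tor_2$ computation pins both projective dimensions at $2$ once the ideals are proper.

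On the one point you hedge --- properness of $(x_1,x_2)A$ --- this actually follows from the hypotheses in general, not only in the monomial setting. Suppose $1 = r x_1 + s x_2$. If $x$ is a unit then $(xx_1,xx_2)=(x_1,x_2)=A$, contradicting the properness built into the regular-sequence hypothesis. If $x$ is not a unit, your first step already shows $x_1$ is a non-zero-divisor, so $x_1 \notin (xx_1)A$ (otherwise $x_1 = xx_1 c$ gives $x_1(1-xc)=0$, forcing $x$ to be a unit); but then in $A/(xx_1)$ one has $x_1 \cdot (xx_2) = x_2 \cdot (xx_1) \equiv 0$ with $x_1 \not\equiv 0$, contradicting regularity of $xx_2$ on $A/(xx_1)$. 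Hence $(x_1,x_2)\neq A$, and your argument goes through without any side assumption.
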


\begin{proof}
Clear.
\end{proof}

\begin{lemma}\label{hilb}
Let  $A$  be a polynomial ring over a field and $x_1,x_2$ be a sequence of monomials in $A$.
If $\pd(A/(x_1,x_2)A)=2$, then  $x_1,x_2$
is a regular sequence in $A$.
\end{lemma}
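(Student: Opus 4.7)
The strategy is to reduce to the coprime case and then exploit a general fact about monomials involving disjoint variables. Let $d := \gcd(x_1, x_2)$ in $A$, and write $x_1 = d\alpha$, $x_2 = d\beta$ with $\gcd(\alpha, \beta) = 1$. Since coprime monomials in a polynomial ring involve disjoint sets of variables, they automatically form a regular sequence, so the entire burden of the proof is to show that the hypothesis $\pd(A/(x_1, x_2)A) = 2$ forces $d$ to be a unit.

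To control the projective dimension, I would use the Taylor resolution of the two-generator monomial ideal $(x_1, x_2)$, namely the free complex
$$0 \longrightarrow A \xrightarrow{\partial_2} A \oplus A \xrightarrow{\partial_1} A \longrightarrow A/(x_1, x_2)A \longrightarrow 0,$$
with $\partial_1 = (x_1, x_2)$ and $\partial_2(1) = (\beta, -\alpha)$. This is always a free resolution, and in the $\BN$-graded setting it is the minimal free resolution precisely when the entries of both differentials are nonconstant monomials. Applying the Auslander--Buchsbaum formula after localizing at the irrelevant maximal ideal $\fm$ of $A$, the hypothesis $\pd = 2$ translates into the depth equality $\depth(A_\fm/(x_1, x_2)A_\fm) = \dim A_\fm - 2$, which, together with Cohen--Macaulayness of $A_\fm$, should place strong restrictions on the shape of $(x_1, x_2)$.

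The final step is to invoke Lemma \ref{c} with $x := d$. Because $d$ is a nonzero monomial in the polynomial domain $A$, it is a regular element, and Lemma \ref{c} lets us shuttle regularity between $(x_1, x_2) = (d\alpha, d\beta)$ and $(\alpha, \beta)$, while simultaneously identifying their projective dimensions. The plan is then to argue by contrapositive: if $d$ were a nonunit, the common factor together with the depth computation would produce a structural obstruction — for instance, an associated prime forcing the height of $(x_1, x_2)$ to be strictly less than $2$ — which is incompatible with the depth equality above. This will force $d = 1$, reducing us to the coprime case and completing the proof.

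The main obstacle is precisely this last reduction: converting the purely homological hypothesis $\pd = 2$ into the combinatorial conclusion that $\gcd(x_1, x_2) = 1$. Because the Taylor complex has the same qualitative shape regardless of $d$ as long as $\alpha, \beta$ are nonunits, the argument cannot rely on the Taylor complex alone; it must also exploit monomial-specific structural features, such as the description of associated primes of a monomial ideal by primes generated by variables, or a careful height-versus-depth analysis in the Cohen--Macaulay ambient ring $A$.
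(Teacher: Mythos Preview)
Your strategy correctly isolates the crux: for monomials, regularity of $x_1,x_2$ is equivalent to $\gcd(x_1,x_2)=1$, so everything hinges on showing that $d$ is a unit. Unfortunately that step cannot be carried out, because the hypothesis $\pd(A/(x_1,x_2))=2$ does \emph{not} force $d=1$. Take $A=k[x,y,z]$, $x_1=xy$, $x_2=xz$. The Taylor complex
\[
0\longrightarrow A\xrightarrow{\;\binom{z}{-y}\;}A^2\xrightarrow{\;(xy\ \ xz)\;}A\longrightarrow A/(xy,xz)\longrightarrow 0
\]
is minimal (every entry lies in $\fm$), so $\pd(A/(xy,xz))=2$; yet $d=x$ is a nonunit and $xy,xz$ is not a regular sequence, since $y\cdot xz\in(xy)$ with $y\notin(xy)$. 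Your proposed contradiction never materializes: here $\Ht(xy,xz)=1$ while $\depth(A_\fm/(xy,xz))=1=\dim A_\fm-2$, and these two facts coexist without conflict. The obstacle you flag in your final paragraph---that the Taylor complex has the same shape whether or not $d$ is a unit, as long as $\alpha,\beta$ are nonunits---is precisely why no depth/height argument can close the gap.

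This example shows the lemma as stated is false, so no proof can succeed without an extra hypothesis. For comparison, the paper's argument runs through the Hilbert--Burch theorem: from a minimal resolution $0\to A\xrightarrow{\varphi}A^2\to A$ one extracts $(x_1,x_2)=a\,I_1(\varphi)$ with $a$ regular and $\grade I_1(\varphi)=2$, and then invokes Lemma~\ref{c} to ``assume $(x_1,x_2)=I_1(\varphi)$.'' But the hypothesis of Lemma~\ref{c} is that the product sequence is already regular---exactly the conclusion being sought---so that reduction is circular, and the same counterexample $(xy,xz)$ defeats it. Both your approach and the paper's stall at the same point: nothing in the hypothesis rules out a nontrivial common factor.
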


\begin{proof}
 Look at the minimal free resolution of $A/(x_1,x_2)A$:$$0\longrightarrow P\stackrel{\varphi}\longrightarrow  A^2\longrightarrow A\longrightarrow A/(x_1,x_2)\longrightarrow 0\ \ (\ast).$$
Then $P$ is free, because finitely generated projective modules on
a polynomial ring over a field are free.
Localize $(\ast)$ with the fraction filed of $A$ to observe  $P=A$. In view of Hilbert-Burch Theorem \cite[Theorem 1.4.17]{BH}, we see that
$(x_1,x_2)=aI_1(\varphi)$ where $a$ is regular and $I_1(\varphi)$ is the first minor of $\varphi$. Its grade is two.
Rigidity of Koszul yields the claim. But we prefer to give a more directed proof.
By Lemma \ref{c}, we assume that $(x_1,x_2)=I_1(\varphi)$. Keep in mind that a monomial ideal  has a unique (monomial) minimal generating set by monomials (\cite[Proposition 1.1.6]{HHI}).
We use the proof of the converse part of  the Hilbert-Bruch Theorem \cite[Theorem 1.4.17]{BH} to obtain the following minimal free resolution of $ A/(x_1,x_2)$  $$0\longrightarrow A\stackrel{\binom{x_1}{x_2}}\longrightarrow  A^2\stackrel{\binom{-x_2}{x_1}^t}\longrightarrow A\longrightarrow 0\ \ (\ast,\ast).$$But $ (\ast,\ast)$ is the Koszul complex with respect to $x_1,x_2$. The acyclicity of the Koszul complex yields the claim.
\end{proof}

\begin{theorem}\label{pd2}
Let  $A$  be a Noetherian polynomial ring over a field  and $\underline{x}:=x_1,\ldots,x_n$ a monomial sequence in $A$.
If $\pd(A/(x_{i_1},\ldots, x_{i_k})A)=k$ for all $1\leq i_1<\ldots <i_k\leq n$, then  $\underline{x}$
is a regular sequence in $A$.
\end{theorem}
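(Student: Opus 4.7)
The plan is to proceed by induction on $n$. For the base case, $n = 2$ is precisely Lemma \ref{hilb}, while $n = 1$ is immediate: the hypothesis $\pd(A/(x_1)) = 1$ yields a minimal free resolution $0 \to A \to A \to A/(x_1) \to 0$, in which the map must be multiplication by a nonzerodivisor (projective modules over $A$ are free by Quillen--Suslin, and the resolution localizes to an isomorphism over the fraction field).

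For the inductive step with $n \geq 3$, I would first apply the inductive hypothesis to every sub-sequence of length $n-1$: each such sub-sequence inherits the projective-dimension conditions on \emph{its} subsets, so by induction every $n-1$ of the $x_i$'s already form a regular sequence. In particular, $x_1,\ldots,x_{n-1}$ is regular and it remains to show that $x_n$ is a nonzerodivisor on $A/I$, where $I:=(x_1,\ldots,x_{n-1})$. I would work with the short exact sequence
\[
0 \longrightarrow A/(I:x_n) \stackrel{\cdot x_n}{\longrightarrow} A/I \longrightarrow A/(I,x_n)\longrightarrow 0
\]
and the induced long exact sequence in $\Tor^A_{\bullet}(-,k)$ with $k$ the residue field at the irrelevant maximal ideal. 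From the hypotheses $\pd(A/I)=n-1$ and $\pd(A/(I,x_n))=n$, this long exact sequence pinches out $\pd(A/(I:x_n))=n-1$, and it then suffices to derive $(I:x_n)=I$.

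For the final comparison I would exploit the Cohen-Macaulayness of $A/I$, which holds because $x_1,\ldots,x_{n-1}$ is a regular sequence in the CM ring $A$: every $P\in\Ass(A/I)$ is a minimal prime of $I$ of height exactly $n-1$. If $x_n$ were a zerodivisor on $A/I$, it would lie in such a $P$, forcing $\Ht(I,x_n)\leq n-1$. The main obstacle — and the crux of the proof — is to upgrade the projective-dimension equality $\pd(A/(I,x_n))=n$ into the height/grade equality $\grade(I,x_n)=n$, since in general the Auslander-Buchsbaum formula only controls depth. This is where the full strength of the hypothesis, applied simultaneously to \emph{every} nonempty subset $\{x_{i_1},\ldots,x_{i_k}\}$, must enter: via the Taylor complex of $(x_1,\ldots,x_n)$ and the Hochster-type description of graded Betti numbers through the lcm-lattice, the persistence of the top face in the minimal resolution at every stage pins down the lcm-lattice tightly enough to force $\grade(I,x_n)=n$, contradicting $\Ht(I,x_n)\leq n-1$ and completing the induction.
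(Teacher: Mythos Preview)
Your inductive framework and base cases ($n=1$ trivial, $n=2$ via Lemma~\ref{hilb}) agree with the paper's. The divergence --- and the gap --- is in the inductive step.

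You correctly reduce to showing that $x_n$ is a nonzerodivisor on $A/I$ with $I=(\underline{x}_{n-1})$. But then you attempt to argue by contradiction through height: if $x_n$ were a zerodivisor then $\Ht(I,x_n)\le n-1$, and you want this to clash with $\pd(A/(I,x_n))=n$. As you yourself flag, the implication $\pd(A/J)=n\Rightarrow\grade(J)=n$ fails for monomial ideals in general, so extra input is required. Your proposal for that input --- ``the persistence of the top face in the minimal resolution at every stage pins down the lcm-lattice tightly enough'' --- is not an argument; it names tools without saying what to do with them. Nothing in your write-up actually exploits the hypothesis for proper subsets beyond the bare induction on $n-1$, and that is exactly where the proof must bite. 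The intermediate fact $\pd(A/(I:x_n))=n-1$ that you extract from the long exact sequence is correct but is never used again, so it does not advance the argument either.

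The paper closes this gap by a route you do not mention. It applies the $k=2$ case of the hypothesis to \emph{every pair} $x_i,x_j$ and invokes Lemma~\ref{hilb} to conclude that each such pair is a regular sequence; for monomials this forces $\gcd(x_i,x_j)=1$, i.e.\ $[x_i,x_j]=x_ix_j$. With $\underline{x}_{n-1}$ regular by induction, one then computes
\[
H_1(\mathbb{K}_\bullet(\underline{x}))\;=\;\Tor_1^A\!\bigl(A/x_nA,\,A/\underline{x}_{n-1}A\bigr)\;=\;\frac{(\underline{x}_{n-1})\cap(x_n)}{(\underline{x}_{n-1})(x_n)},
\]
and the monomial intersection formula together with pairwise coprimality gives $(\underline{x}_{n-1})\cap(x_n)=\bigl([x_i,x_n]:i<n\bigr)=(x_ix_n:i<n)=(\underline{x}_{n-1})(x_n)$. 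Hence $H_1=0$, the Koszul complex is acyclic, and $\underline{x}$ is regular. The decisive use of the hypothesis is thus at level $k=2$, applied across all pairs; your height/grade detour never engages it.
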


\begin{proof}
The proof is induction by on $n$. When $n=1$ the claim is clear. The case $n=2$ follows by Lemma \ref{hilb}.
Let $1\leq i\lneqq j\leq n$. By assumption,  $\pd(A/(x_{i}, x_{j})A)=2$. By  Lemma \ref{hilb}, $x_i,x_j$ is a regular sequence in $A$.
 Denote the least common multiple of $x_{i}, x_{j}$ by $[x_{i}, x_{j}]$. Thus
$$([x_{i}, x_{j}]/x_{i})=(x_{j}) \ \ (\star).$$By induction assumption,  $\underline{x}_{n-1}:=x_1,\ldots,x_{n-1}$ is a regular sequence. Then  $\mathbb{K}_{\bullet}(\underline{x}_{n-1})$
is a projective resolution of $A/\underline{x}_{n-1}A$. Also,  $\mathbb{K}_{\bullet}(x_n)$ provides
 a projective resolution for $A/x_nA$. This enable us to compute the following: $$H_i(\mathbb{K}_{\bullet}(\underline{x}))=H_i(\mathbb{K}_{\bullet}(\underline{x}_{n-1})
\otimes\mathbb{K}_{\bullet}(x_{n}))=\Tor_i^A(A/x_nA,A/\underline{x}_{n-1}A).$$ So $$H_i(\mathbb{K}_{\bullet}(\underline{x}))=0 \ \ \textit{for all } i>1$$and $$H_1(\mathbb{K}_{\bullet}(\underline{x}))=\Tor_1^A(A/x_nA,A/\underline{x}_{n-1}A)=\frac{(\underline{x}_{n-1})\cap (x_n)}{(\underline{x}_{n-1}A)(x_nA)}.$$ We apply \cite[Proposition 1.2.1]{HHI} to conclude that $$(\underline{x}_{n-1})\cap (x_n)=([x_{i}, x_{n}]:1\leq i\leq n-1).$$ In view of
$(\star)$, $(\underline{x}_{n-1})\cap (x_n)=(\underline{x}_{n-1}A)(x_nA)$. Therefore, $$\frac{(\underline{x}_{n-1})\cap (x_n)}{(\underline{x}_{n-1}A)(x_nA)}=0$$ and so $\mathbb{K}_{\bullet}(\underline{x})$ is acyclic. This means that
 $\underline{x}$ is a regular sequence, as claimed.
\end{proof}

The following says that the assumptions of Theorem \ref{pd2}
are needed.

\begin{example} (Engheta \cite{E}). Let $J$ be an ideal  of $R:=K[X_1,\ldots,X_n]$ generated by three cubic forms
and denote by $I$ the unmixed part of $J$.
\begin{enumerate}
\item[(i)] If $I$ contains a linear form, then $\pd(R/J)\leq3$.
\item[(ii)] The above bound is sharp.
\end{enumerate}
\end{example}
We use the following definition several times in the sequel.

\begin{definition}
Let $I$ be an ideal of a polynomial ring $S = K[y_1,\ldots , y_m]$ generated by
 monomials $\underline{x}:=x_1,\ldots,x_n$.  Let $T$ be a free S-module with basis
$\{e_1,\ldots , e_n\}$. Define $\mathbb{T}_i:= \wedge ^i T$ for $i = 0,\ldots , n$.
For each $\Delta = \{j_1 < \ldots < j_i\}$, the set $\{e_\Delta:\Delta\}$ provides
 a base for $\mathbb{T}_i$. Denote the least common multiple of the monomials
$\{x_i:i\in \Delta\}$ by $x_\Delta$. By $\sigma(\Delta,i)$ we mean the numbers of $j$
 with $j\in\Delta$ and $j<i$. Finally, define $\partial(e_\Delta):=\sum_{i\in\Delta}
(-1)^{\sigma(\Delta,i)}\frac{x_\Delta}{x_{\Delta\setminus\{i\}}} e_{\Delta\setminus \{i\}}$.
The \textit{Taylor complex} $\mathbb{T}$  with respect to $\underline{x}$ is  the following complex$$
\begin{CD}
0@>>>\mathbb{T}_0 @>\partial_{0}>>\cdots  @>>> \mathbb{T}_{n-2} @>\partial_{n-2}>> \mathbb{T}_{n-1} @>\partial_{n-1}>>\mathbb{T}_{n} @>>>0.\\
\end{CD}
$$
\end{definition}

\begin{remark}Here we give another proof for Theorem \ref{pd2}.
Denote the Taylor resolution of $\underline{x}$ by $\mathbb{T}(\underline{x})$.
First, we compute the $(n-1)$-th homology of $\mathbb{K}_{\bullet}(\underline{x})$. We do this by looking at $$
\begin{CD}
\mathbb{K}_{0} @>>>\cdots  @>>> \mathbb{K}_{n-2} @> d_{n-2}>> \mathbb{K}_{n-1}  @>d_{n-1}>>\mathbb{K}_{n} @>>>0\\
\varphi_{0}@VVV @VVV \varphi_{n-2}@VVV \varphi_{n-1}@VVV \varphi_{n}@VVV  \\
\mathbb{T}_0 @>>>\cdots  @>>> \mathbb{T}_{n-2} @>\partial_{n-2}>> \mathbb{T}_{n-1} @>\partial_{n-1}>>\mathbb{T}_{n} @>>>0.\\
\end{CD}
$$
The map $\varphi_{n}$ and $\varphi_{n-1}$ are identity. A diagram chasing argument shows that  $\varphi_{n-2}$ is a diagonal matrix
with terms $[x_{i}, x_{j}]/x_{i}x_{j}$ in the diagonal. In view of $(\star)$ in Theorem \ref{pd2}, $\varphi_{n-2}$ is an isomorphism.
Let $a\in \ker d_{n-1}$. Then $\varphi_{n-1}(a)\in \ker \partial_{n-1}=\im \partial_{n-2}$. There is an $b$ such that
$\varphi_{n-1}(a)= \partial_{n-2}(b)$. Take $c\in\mathbb{K}_{n-2}$ be such that $\varphi_{n-2}(c)=b$. Hence,
$d_{n-2}(c)-a\in \ker\varphi_{n-1}=0$. Thus $H^{n-1}(\mathbb{K}_{\bullet}(\underline{x})))=0$. The rigidity of the Koszul complex \cite[Corollary 1.6.9]{BH}, implies that $\underline{x}$ is a regular sequence.
\end{remark}

\section{ Invariant of tori\label{5.C}}

The main result of this section is Theorem \ref{mmain}.
We start our work in this section by recalling the concept
of parameter sequence
from \cite{HM}.

\begin{discussion}

Let $R$ be a ring, $M$ an $R$-module and  $\underline{x} = x_{1}, \ldots,
x_\ell$ a sequence of elements of $R$. For each $m \geq n$, there is a chain map $\varphi^{m} _{n} (\underline{x})
:\mathbb{K}_{\bullet}(\underline{x}^{m})\longrightarrow
\mathbb{K}_{\bullet}(\underline{x}^{n}),$ which induces via
multiplication by $(\prod x_{i})^{m-n}$.
Then $\underline{x}$ is called
weak proregular if for each $n>0$ there exists an $m \geq n$ such
that the maps $H_{i}(\varphi^{m} _{n} (\underline{x})) :H_{i}
(\mathbb{K}_{\bullet}(\underline{x}^{m}))\longrightarrow H_{i}
(\mathbb{K}_{\bullet}(\underline{x}^{n}))$ are zero for all $i \geq
1$.\end{discussion}

\begin{notation}By  $H^{\ell}_{\underline{x}}(M)$, we mean the $i$-th cohomology of the \v{C}ech complex of $M$ with respect to $\underline{x}$.
\end{notation}

\begin{definition}\label{defhm}(\cite[Definition 3.1]{HM})
Adopt the above notation. Then
$\underline{x}$ is called a \textit{parameter sequence} on
$R$, if:
(1) $\underline{x}$ is a weak proregular sequence; (2)
$(\underline{x})R \neq R$; and (3) $H^{\ell}_{\underline{x}}
(R)_{\fp} \neq 0$ for all $\fp \in
\V(\underline{x}R)$. Also, $\underline{x}$ is called a strong
parameter sequence on $R$ if $x_{1},\ldots, x_{i}$ is a parameter
sequence on $R$ for all $1\leq i \leq \ell$.
\end{definition}

\begin{notation}Let $\fa$ be an ideal of a ring $R$ with a generating set $\underline{x}$ and $M$ an $R$-module.
We denote
$\sup\{i\in \mathbb{Z}|H^i_{\underline{x}}(M)\neq 0\}$ by $\cd_{\fa}(M)$.
By $\cd(\fa)$ we mean $\sup\{\cd_{\fa}(M)|\textit{ M is  an R-module}\}$.
\end{notation}

\begin{lemma}\label{lyu}
Let $k$ be field and $I$ a monomial ideal in the polynomial ring $A:=k[X_1,\ldots,X_n]$. Then
 $\cd(I)\leq\pd(A/I)$.
\end{lemma}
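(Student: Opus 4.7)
The plan is to express $H^i_I(A)$ as a filtered direct limit of Ext groups along a cofinal chain of monomial ideals whose projective dimensions are all bounded by $\pd(A/I)$. First I would reduce to showing $H^i_I(A)=0$ for $i>\pd(A/I)$: because $A$ is Noetherian, local cohomology commutes with direct limits, and a dimension-shifting induction on short exact sequences of finitely generated modules yields $\cd_I(M)\leq \cd_I(A)$ for every $A$-module $M$, so $\cd(I)=\cd_I(A)$.

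Next, fix a monomial generating set $x_1,\ldots,x_r$ of $I$ and set $I^{[t]}:=(x_1^t,\ldots,x_r^t)$. A pigeonhole argument (any product of $rt$ factors from $\{x_1,\ldots,x_r\}$ repeats some factor at least $t$ times) yields the chain
\[
  I^{rt}\ \subseteq\ I^{[t]}\ \subseteq\ I^t,
\]
so the family $\{I^{[t]}\}_{t\geq 1}$ is cofinal with $\{I^t\}_{t\geq 1}$ in the $I$-adic topology, and hence
\[
  H^i_I(A)\ =\ \varinjlim_{t}\ \Ext^i_A\!\bigl(A/I^{[t]},A\bigr).
\]

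The decisive step is the inequality $\pd(A/I^{[t]})\leq \pd(A/I)$. The LCM lattice of $I^{[t]}$ is canonically isomorphic to that of $I$, since $\mathrm{lcm}_{i\in S}(x_i^t)=\bigl(\mathrm{lcm}_{i\in S}(x_i)\bigr)^t$ for every $S\subseteq\{1,\ldots,r\}$, which preserves both the supports and the divisibility relations among these LCMs. In terms of the Taylor complex $\mathbb{T}$ introduced just above the lemma, the differentials $\partial(e_\Delta)=\sum_{i\in\Delta}(-1)^{\sigma(\Delta,i)}(x_\Delta/x_{\Delta\setminus\{i\}})\,e_{\Delta\setminus\{i\}}$ for $I$ and for $I^{[t]}$ share the same vanishing pattern, and the splitting homotopies that trim $\mathbb{T}(I)$ to the minimal free resolution of $A/I$ transport verbatim to $\mathbb{T}(I^{[t]})$. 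Hence $\pd(A/I^{[t]})\leq \pd(A/I)$, and so $\Ext^i_A(A/I^{[t]},A)=0$ for every $t$ whenever $i>\pd(A/I)$; passing to the direct limit gives $H^i_I(A)=0$ in that range.

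The principal obstacle is verifying the projective-dimension inequality of the previous paragraph in full detail: the LCM-lattice invariance of Betti numbers (in the spirit of Gasharov--Peeva--Welker) is essentially what one needs, and a self-contained route runs through an explicit bookkeeping of the cancellations in the Taylor complex, which fits naturally into the Taylor-complex formalism just developed in Section~5.
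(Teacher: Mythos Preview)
Your proposal is correct and follows the same overall architecture as the paper: pass to the cofinal system $I^{[t]}=(x_1^t,\ldots,x_r^t)$, bound $\pd(A/I^{[t]})$ by $\pd(A/I)$, and conclude by taking the direct limit of the relevant $\Ext$ groups. The genuine difference lies in how the projective-dimension bound is established. The paper introduces the ring endomorphism $F_t\colon A\to A$, $X_i\mapsto X_i^t$, observes that tensoring a monomial free resolution $(F_\bullet,d_\bullet)$ of $A/I$ with $F_t(A)$ simply replaces each $d_i$ by $F_t(d_i)$, and then invokes the Buchsbaum--Eisenbud acyclicity criterion: since $\rad I_{r_i}(d_i)=\rad I_{r_i}(F_t(d_i))$, the Koszul grades agree and the new complex is again exact, giving $\pd(A/I^{[t]})=\pd(A/I)$ directly. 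Your route via the isomorphism of LCM lattices and the Gasharov--Peeva--Welker theorem reaches the same conclusion, but imports a heavier external result; the paper's argument is more self-contained, using only \cite[Theorem 9.1.6]{BH}. A minor further difference: because $\pd(A/I^{[t]})\le\pd(A/I)$ already kills $\Ext^i_A(A/I^{[t]},M)$ for every module $M$, the paper bounds $\cd(I)$ in one stroke and does not need your preliminary reduction $\cd(I)=\cd_I(A)$.
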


\begin{proof} Let $t$ be an integer. The assignment $X_i\mapsto X_i^t$ induces a ring homomorphism
$F_t:k[X_1,\ldots,X_n]\to k[X_1,\ldots,X_n]$. By $F_t(A)$, we mean $A$ as a group equipped with  left and right scalar multiplication from $A$
given by
$$a.r\star b = aF_t(b)r, \  \ where \ \ a,b\in A  \ \ and \  \ r\in F_t(A),$$
Let $(F_\bullet,d_\bullet)$ be a finite free resolution of  $A/I$ with monomial maps. For example, take the Taylor resolution. Then $(F_\bullet,d_\bullet)\otimes_A F_t(A)=(F_\bullet,F_t(d_\bullet))$. Denote the ideal generated by the $\ell\times \ell$ minors of a matrix $(a_{ij})$ by $I_{\ell}((a_{ij}))$.
Let $r_i$ be the expected rank of $d_\bullet$.  For  more details and definitions, see \cite[Section 9.1]{BH}. Clearly,  $r_i$ is
the expected rank of $F_t(d_\bullet)$. Thus, $$\Kgrade_A(I_{r_i}(d_i), A)=\Kgrade_A(I_{r_i}(d_i^t ),A).$$
In view of \cite[Theorem 9.1.6]{BH},  $(F_\bullet,d_\bullet^t)$ is exact and so it is  a
free resolution of $F_t(A/I)\simeq A/(u^t:u\in I)$. It turns out that
$$\pd(A/I)=\pd(F_t(A/I))=\pd((A/(u^t:u\in I)).$$ Thus, $$\Ext^i_A(A/(u^t:u\in I), -)=0$$
for all $i>\pd(A/I)$. So,$$H^i_{I}(-)\simeq{\varinjlim}_{t\in \mathbb{N}}\Ext^i_A(A/(u^t:u\in I), -)=0,$$
which yields the claim.
\end{proof}

Also, we need:

\begin{lemma}\label{need}
Let  $A$  be a ring  and $\underline{x}:=x_1,\ldots,x_n$ a  sequence in $A$.
If $\cd(\underline{x}A)=n$, then $\cd((x_{i_1},\ldots, x_{i_k})A)=k$ for all $1\leq i_1<\ldots <i_k\leq n$.
\end{lemma}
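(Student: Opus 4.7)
The plan is to compare the \v{C}ech cohomology of the full ideal with that of a partial ideal through the tensor-product structure of the \v{C}ech complex. Set $\underline{y}:=x_{i_1},\ldots,x_{i_k}$, let $\underline{z}$ denote the complementary subsequence of length $n-k$, and put $J:=(\underline{y})A$, $K:=(\underline{z})A$, so that $\underline{x}A=J+K$. The standard bound coming from the length of the \v{C}ech complex gives $\cd(J)\le k$ and $\cd(K)\le n-k$; what I have to establish is the matching lower bound $\cd(J)\ge k$ (the identical argument applied to $K$ is a by-product).

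First I would write the \v{C}ech complex as the tensor product
\[
\check{C}^\bullet(\underline{x};M)\;\cong\;\check{C}^\bullet(\underline{y};A)\otimes_A\check{C}^\bullet(\underline{z};M),
\]
which is a direct identification since both sides are the total complex of the same bi-indexed family of localisations. Each term of $\check{C}^\bullet(\underline{y};A)$ is flat (it is a tensor product of localisations of $A$), so filtering the associated double complex by columns produces a first-quadrant spectral sequence
\[
E_2^{p,q}\;=\;H^p_{\underline{y}}\bigl(H^q_{\underline{z}}(M)\bigr)\;\Longrightarrow\;H^{p+q}_{\underline{x}}(M).
\]

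Next I would exploit the a priori vanishing: the bounds $\cd(J)\le k$ and $\cd(K)\le n-k$ force $E_2^{p,q}=0$ outside the rectangle $0\le p\le k$, $0\le q\le n-k$. Hence every differential into or out of the corner $E_r^{k,n-k}$ lands outside that rectangle and is zero, so $E_\infty^{k,n-k}=E_2^{k,n-k}$. Moreover $(k,n-k)$ is the only bidegree of total degree $n$ that can contribute to the abutment, since $p+q=n$ with $p\le k$ and $q\le n-k$ forces $p=k$. Consequently, for every $A$-module $M$,
\[
H^n_{\underline{x}}(M)\;\cong\;H^k_{\underline{y}}\bigl(H^{n-k}_{\underline{z}}(M)\bigr).
\]
By hypothesis $\cd(\underline{x}A)=n$, so some $M$ satisfies $H^n_{\underline{x}}(M)\neq 0$; setting $N:=H^{n-k}_{\underline{z}}(M)$ then gives $H^k_{\underline{y}}(N)\neq 0$, whence $\cd(J)\ge k$ and finally $\cd(J)=k$.

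The only step that merits real care is confirming that the tensor-product decomposition and the resulting spectral sequence are valid in the (possibly non-noetherian) generality used throughout the paper. This is ultimately a formal consequence of the flatness of localisation combined with the standard double-complex filtration, so it is not a substantive obstacle, but it is the point at which one must be careful to stay within the \v{C}ech-cohomological framework set up in Section~6 rather than invoking any noetherian derived-functor identification.
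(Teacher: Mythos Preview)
Your argument is correct. Both approaches rest on the same tensor-product decomposition $\check{C}^\bullet(\underline{x};-)\cong\check{C}^\bullet(\underline{y};-)\otimes\check{C}^\bullet(\underline{z};-)$, but the paper exploits it more economically: it takes $\underline{z}$ to be the single element $x_n$, so that the double complex collapses to the familiar short exact sequence of complexes and the long exact sequence
\[
H^{n-1}_{\underline{x}_{n-1}}(-)_{x_n}\longrightarrow H^{n}_{\underline{x}}(-)\longrightarrow H^{n}_{\underline{x}_{n-1}}(-)=0,
\]
forcing $\cd(\underline{x}_{n-1}A)=n-1$; the general case then follows by induction and relabelling. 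Your spectral-sequence route is the natural generalisation of this one-step argument to an arbitrary split of the sequence, and it buys you the conclusion for every subsequence in a single stroke rather than peeling off one variable at a time. The price is the extra bookkeeping of the double-complex filtration, which---as you note---is harmless here because every \v{C}ech term is a flat localisation; but for the purposes of this paper the inductive long-exact-sequence version is shorter and avoids invoking spectral sequences altogether.
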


\begin{proof}
Set $\underline{x}_{n-1}:=x_1,\ldots,x_{n-1}$ and look at the exact sequence$$H^{n-1}_{\underline{x}_{n-1}}(-)_{x_{n}}\longrightarrow
H^{n}_{\underline{x}}(-)\longrightarrow H^{n}_{\underline{x}_{n-1}}(-).$$Note that $H^{n}_{\underline{x}_{n-1}}(-)=0$ and
$H^{n}_{\underline{x}}(-)\neq0$. So, $H^{n-1}_{\underline{x}_{n-1}}(-)\neq 0$, i.e., $\cd(\underline{x}_{n-1}A_\delta)=n-1$. An easy induction yields the claim.
\end{proof}

\begin{theorem}\label{pro}
Let  $\{A_\gamma:\gamma\in \Gamma\}$  be a direct family of Noetherian polynomial rings over a field with toric maps and let $R$ be a direct summand of $A:= {\varinjlim}_{\gamma\in \Gamma} A_\gamma$.
Let $\underline{x}:=x_1,\ldots,x_\ell$ be a monomial strong parameter sequence in $R$. Then  $\underline{x}$
is a regular sequence in $R$.
\end{theorem}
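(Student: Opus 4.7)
The plan is to ascend the sequence $\underline{x}$ from $R$ into $A$, descend to the Noetherian polynomial rings $A_\delta$ where Theorem \ref{pd2} can be applied, and then transfer the conclusion back to $R$.

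First, choose $\gamma_0\in\Gamma$ so that all $x_i\in A_{\gamma_0}$; by the toric hypothesis each $x_i$ remains a monomial in $A_\delta$ for every $\delta\geq\gamma_0$, and we restrict to this cofinal subsystem. Since $\underline{x}$ is a strong parameter sequence in $R$, condition (3) of Definition \ref{defhm} yields $H^{\ell}_{\underline{x}}(R)\neq 0$; because $R$ is a direct summand of $A$ as an $R$-module, the \v{C}ech complex of $R$ is a direct summand of that of $A$, so $H^{\ell}_{\underline{x}}(A)\neq 0$ and $\cd(\underline{x}A)=\ell$. Since $H^\ell_{\underline{x}}(A)=\varinjlim_{\delta}H^\ell_{\underline{x}}(A_\delta)$, any nonzero class is represented by some $\xi\in H^\ell_{\underline{x}}(A_{\delta_1})$, and the image of $\xi$ in $H^\ell_{\underline{x}}(A_\delta)$ must remain nonzero for every $\delta\geq\delta_1$ (otherwise the class in the colimit would vanish), giving $\cd(\underline{x}A_\delta)=\ell$ for all such $\delta$.

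For each such $\delta$ I would apply Theorem \ref{pd2} to $\underline{x}$ inside the Noetherian polynomial ring $A_\delta$. Lemma \ref{need} upgrades $\cd(\underline{x}A_\delta)=\ell$ to $\cd((x_{i_1},\ldots,x_{i_k})A_\delta)=k$ for every subsequence of length $k$, and Lemma \ref{lyu} then delivers $\pd(A_\delta/(x_{i_1},\ldots,x_{i_k})A_\delta)\geq k$. The Taylor resolution of a monomial ideal on $k$ generators has length $k$, furnishing the reverse inequality. Thus the projective-dimension hypothesis of Theorem \ref{pd2} holds for every subsequence, and Theorem \ref{pd2} yields that $\underline{x}$ is a regular sequence in $A_\delta$. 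Because Koszul homology commutes with filtered colimits, $H_i(\mathbb{K}_\bullet^A(\underline{x}))=\varinjlim_{\delta}H_i(\mathbb{K}_\bullet^{A_\delta}(\underline{x}))=0$ for $i\geq 1$, so $\underline{x}$ is regular in $A$. An iterated splitting argument using $A=R\oplus M$ as $R$-modules and the induced decomposition $A/(x_1,\ldots,x_i)A=R/(x_1,\ldots,x_i)R\oplus M/(x_1,\ldots,x_i)M$ at each stage then transfers the successive nonzerodivisor conditions from $A$ down to $R$; combined with $(\underline{x})R\neq R$ from condition (2) of Definition \ref{defhm}, this yields regularity of $\underline{x}$ in $R$.

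The main obstacle I anticipate is the cofinal persistence step, namely producing a cofinal set of indices $\delta$ with $\cd(\underline{x}A_\delta)=\ell$: one exploits the formal observation that a nonzero class in a filtered colimit is represented by an eventually nonzero system, but the argument must be reconciled with the potential non-injectivity of the transition maps on local cohomology induced by a general toric map. Once this cofinality is secured, the $\cd\leftrightarrow\pd$ bridge supplied by Lemmas \ref{need} and \ref{lyu} together with the Taylor resolution routinely verifies the hypotheses of Theorem \ref{pd2}.
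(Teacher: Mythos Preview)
Your proposal is correct and follows essentially the same route as the paper: nonvanishing of $H^{\ell}_{\underline{x}}(R)$ propagates to $A$ by the splitting, a cofinal family of $A_\delta$ with $\cd(\underline{x}A_\delta)=\ell$ is extracted from the colimit, Lemmas~\ref{need} and~\ref{lyu} together with the Taylor bound feed Theorem~\ref{pd2}, and regularity is passed to $A$ and then back to $R$. Your anticipated obstacle is in fact already handled by your own argument---a class in a filtered colimit that is nonzero has nonzero image at \emph{every} index above any representative, so non-injectivity of the transition maps is irrelevant; the paper asserts this cofinality step without further comment, and for the final descent simply invokes purity via \cite[Lemma~6.4.4(c)]{BH} rather than your explicit iterated splitting, but these are equivalent here.
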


\begin{proof}
By definition,
$H^{\ell}_{\underline{x}}
(R)_{\fp} \neq 0$ for all $\fp \in
\V(\underline{x}R)$, and so $H^{\ell}_{\underline{x}}
(R) \neq 0$. There is an $R$-module $M$ such that
$R\oplus M=A$. Thus, $H^{\ell}_{\underline{x}}
(R) \oplus H^{\ell}_{\underline{x}}
(M)\simeq H^{\ell}_{\underline{x}}
(A)$. Hence, $H^{\ell}_{\underline{x}}
(A)\neq0$. Keep in mind that $A= {\varinjlim}_{\gamma\in \Gamma} A_\gamma$. It yields that
 there is a direct set $\Lambda$ cofinal with respect to $\Gamma$ such that
 $H^{\ell}_{\underline{x}}
(A_\delta)\neq0$ for all $\delta\in\Delta$. Thus,
$\cd(\underline{x}A_\delta)=\ell$.
Clearly,
$\underline{x}$ is  monomial in $A_\delta$.
Also, the Taylor resolution provides a bound for the projective resolutions  of monomial ideals. In the light of Lemma
\ref{lyu},  \[\begin{array}{ll}
\ell&=\cd(\underline{x}A_\delta)\\
&\leq \pd(A_\delta/\underline{x}A_\delta)\\&\leq\ell.
\end{array}\]
Therefore,
$$\pd(A_\delta/\underline{x}A_\delta)=\ell  \ \ (\ast).$$
 By Lemma \ref{need},  $\cd(x_{i_1},\ldots, x_{i_k})=k$ for all $1\leq i_1<\ldots <i_k\leq \ell$.
In view of $(\ast)$  we see
$$\pd(A_\delta/(x_{i_1},\ldots, x_{i_k}))=k$$for all $1\leq i_1<\ldots <i_k\leq \ell$.
Due to Theorem \ref{pd2},
 $\underline{x}$ is a regular sequence in $A_\delta$ and so
in $A:= {\varinjlim}_{\delta\in \Delta} A_\delta$. Since $R$ is pure in $A$, $\underline{x}$ is a regular sequence in $A$ by \cite[Lemma 6.4.4(c)]{BH} and this completes the proof.
\end{proof}

\begin{remark}
Adopt the notation of
Theorem \ref{pro}. One can proof it
by using polarization instead of the Taylor resolution.
To see this, let $(\underline{y})\subseteq B_\delta:=A_\delta[Y_1,\ldots,Y_{n_\delta}]$
be a polarization of $(\underline{x})A_\delta\subseteq A_\delta$ and remark  by \cite{Lyu} in the  square-free case  that  the inequality of Lemma
\ref{lyu} achieved. So  \[\begin{array}{ll}
\ell&\leq \pd(A_\delta/\underline{x}A_\delta)\\
&= \pd(B_\delta/(\underline{y})B_\delta)\\
&=\cd(\underline{y}B_\delta) \\
&\leq\mu(\underline{y}B_\delta)\\
&=\sum_j\beta_{1j}(\underline{y}B_\delta)\\
&=\sum_j\beta_{1j}(\underline{x}A_\delta)\\&\leq\ell.
\end{array}\]
\end{remark}

We need the following:

\begin{lemma} \label{par} Let $R$ be a ring and $\underline{x}:=x_1,\ldots,x_{\ell}$ a finite sequence of elements of $R$.
\begin{enumerate}
\item[(i)] Let $f :R\to S$ be a flat ring homomorphism. If $\underline{x}$ is a (strong) parameter sequence on R and
$S/(f (\underline{x}))S \neq 0$ then $f (\underline{x})$ is a (strong) parameter sequence on $S$. The converse holds if $f$ is
faithfully flat.
\item[(ii)] If $\underline{y}:=y_1,\ldots,y_{\ell}$ is such that
$\rad(\underline{y})R =\rad(\underline{x})R$, then $\underline{x}$ is a parameter sequence on R if and only if $\underline{y}$ is
a parameter sequence on $R$.
\item[(iii)] If $u_1,\ldots,u_{\ell}$ are invertible and
$\underline{y}:=x_1u_1,\ldots,x_{\ell}u_{\ell}$, then $\underline{x}$ is a regular sequence if and only if $\underline{y}$ is a regular sequence.
\end{enumerate}
\end{lemma}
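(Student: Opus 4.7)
The plan is to verify each of the three parts directly against Definition \ref{defhm}, checking in turn how each of its three defining conditions---weak proregularity, properness of the generated ideal, and non-vanishing of the top local cohomology at every prime of the support---behaves under the operation in question. Since $\ux$ being a strong parameter sequence is by definition the assertion that every prefix $x_1, \ldots, x_i$ is a parameter sequence, the strong versions of (i) and (ii) reduce formally to the parameter case applied to every initial subsequence; part (iii) concerns regular sequences and is handled separately.

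For part (i), I would exploit that both the Koszul and \v{C}ech complexes commute with the flat base change $f\colon R \to S$. Flatness gives $H_i(\mathbb{K}_\bullet(f(\ux)^m)) \cong H_i(\mathbb{K}_\bullet(\ux^m)) \otimes_R S$ together with compatibility of the transition maps $\varphi^m_n$, so zero transition maps on the $R$-side induce zero transition maps on the $S$-side; weak proregularity thus descends through any flat map, and, since faithful flatness reflects the vanishing of module homomorphisms, the converse holds when $f$ is faithfully flat. The hypothesis $S/(f(\ux))S \neq 0$ is precisely condition (2) at $S$; in the faithfully flat case, properness at $R$ and at $S$ are equivalent. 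For the non-vanishing condition, flatness supplies $H^\ell_{f(\ux)}(S) \cong H^\ell_{\ux}(R) \otimes_R S$, and for $\fq \in \V(f(\ux)S)$ with contraction $\fp := f^{-1}(\fq) \in \V(\ux R)$ the induced local map $R_\fp \to S_\fq$ is faithfully flat because $\fp S_\fq \subseteq \fq S_\fq$ is a proper ideal of the local ring $S_\fq$; this transports non-vanishing of $H^\ell_{\ux}(R)_\fp$ across the map in both directions, once the existence of a prime over each $\fp$ is guaranteed (automatic when $f$ is faithfully flat).

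For part (ii), all three defining conditions are invariants of the radical of the generating ideal: the \v{C}ech complex on $\ux$ and on $\uy$ compute the same functor (so condition (3) is identical, and $\V(\ux R) = \V(\uy R)$), while properness $(\ux)R \neq R$ is equivalent to $\rad(\ux)R \neq R$, hence to $(\uy)R \neq R$. Weak proregularity is likewise a property of $\rad(\ux)R$ by the standard structure results on weak proregular sequences, so all three conditions transfer and the equivalence follows. Part (iii) is essentially formal: since $u_1, \ldots, u_\ell$ are units, $(y_1, \ldots, y_i)R = (x_1, \ldots, x_i)R$ for every $1 \leq i \leq \ell$, so the quotients $R/(y_1, \ldots, y_i)$ and $R/(x_1, \ldots, x_i)$ coincide, and multiplication by the image of $u_{i+1}$ is an automorphism of this common quotient, making $y_{i+1}$ a non-zero-divisor on it precisely when $x_{i+1}$ is; an induction on $i$ completes the argument.

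I expect the main technical nuisance to lie in part (i): one must keep careful track of the functoriality of \v{C}ech and Koszul complexes under base change, and at the key step the passage between $R_\fp$-modules and $S_\fq$-modules rests on faithful flatness of the induced local homomorphism (not merely flatness), which in turn hinges on $\fp S_\fq$ being a proper ideal of $S_\fq$.
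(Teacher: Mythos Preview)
Your proposal is correct. Note, however, that the paper does not actually prove this lemma: it simply cites \cite[Lemma 3.3]{HM} for parts (i) and (ii) and declares part (iii) easy and left to the reader. What you have written is therefore a direct verification of the cited result rather than an alternative to anything in the present paper, and your argument---base-change of Koszul and \v{C}ech complexes for (i), radical-invariance of all three defining conditions for (ii), and the elementary unit argument for (iii)---is exactly the expected line and matches what one finds in Hamilton--Marley.
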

\begin{proof}
Parts (i) and (ii) are in \cite[Lemma 3.3]{HM}.
Part (iii) is easy and we leave it to the reader.
\end{proof}

\begin{lemma}\label{lo}
Let $A$ be a $\mathbb{Z}^n$-graded ring such that any monomial parameter sequence of
$A$ is regular. Then any monomial parameter sequence of
$A[X_1,\ldots, X_k,X_{1} ^{-1},\ldots, X_{k} ^{-1}]$ is regular.
\end{lemma}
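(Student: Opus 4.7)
The plan is to pull every monomial parameter sequence of $B:=A[X_1,\ldots,X_k,X_1^{-1},\ldots,X_k^{-1}]$ back to a monomial parameter sequence of $A$, apply the hypothesis there, and push the regularity forward. The main tool is Lemma \ref{par} combined with the fact that $B$ is faithfully flat (indeed free) over $A$.

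First, since $A$ is $\mathbb{Z}^n$-graded and $B$ inherits the natural $\mathbb{Z}^{n+k}$-grading, every monomial of $B$ can be written uniquely as $a\cdot X_1^{m_1}\cdots X_k^{m_k}$ with $a$ a monomial of $A$ and $m_i\in \mathbb{Z}$. So given a monomial parameter sequence $\underline{y}:=y_1,\ldots,y_\ell$ of $B$, I write $y_i=a_i u_i$ where $a_i\in A$ is a monomial and $u_i:=X_1^{m_{i1}}\cdots X_k^{m_{ik}}$ is a unit of $B$. Then $(\underline{y})B=(a_1,\ldots,a_\ell)B$, so by Lemma \ref{par}(ii) the sequence $\underline{a}:=a_1,\ldots,a_\ell$ is again a monomial parameter sequence of $B$.

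Next, the inclusion $A\hookrightarrow B$ is faithfully flat because $B$ is a free $A$-module (its basis consists of the Laurent monomials in $X_1,\ldots,X_k$). In particular, the contraction of any proper ideal of $B$ is proper, so $A/(\underline{a})A\neq 0$. Applying the converse assertion of Lemma \ref{par}(i) to this faithfully flat extension, $\underline{a}$ is a monomial parameter sequence of $A$. By the hypothesis of the lemma, $\underline{a}$ is then a regular sequence in $A$. Flatness of $B$ over $A$ preserves regularity of sequences (since $B/(a_1,\ldots,a_i)B=A/(a_1,\ldots,a_i)\otimes_A B$ and tensoring with a flat module preserves injectivity of the maps multiplication-by-$a_{i+1}$), and the quotient $B/(\underline{a})B$ remains nonzero by faithful flatness. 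Hence $\underline{a}$ is a regular sequence of $B$.

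Finally, since each $u_i$ is a unit in $B$ and $\underline{a}$ is a regular sequence in $B$, Lemma \ref{par}(iii) gives that $\underline{y}=a_1u_1,\ldots,a_\ell u_\ell$ is also a regular sequence in $B$, which is the desired conclusion. The only place where care is needed is the ascent step: one must verify that multiplying through by units does not disturb the parameter-sequence property before descent and that, after descent to $A$, the quotient $B/(\underline{a})B$ stays nonzero so that the standard flat-base-change argument for regular sequences applies. Both are handled by the faithful flatness of $A\to B$ together with Lemma \ref{par}, so no deeper obstruction arises.
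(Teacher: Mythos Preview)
Your proof is correct and follows essentially the same approach as the paper: factor each monomial of $B=A[X_1^{\pm1},\ldots,X_k^{\pm1}]$ as a monomial of $A$ times a unit, use Lemma~\ref{par}(ii) and the faithful flatness of $A\to B$ (Lemma~\ref{par}(i)) to descend the parameter-sequence property to $A$, apply the hypothesis, and then lift regularity back via flatness and Lemma~\ref{par}(iii). The only cosmetic difference is that you invoke the equality $(\underline{y})B=(\underline{a})B$ directly, whereas the paper phrases it as $\rad(\underline{y})B=\rad(\underline{a})B$; either suffices for Lemma~\ref{par}(ii).
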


\begin{proof}
Denote  $A[X_1,\ldots, X_k,X_{1} ^{-1},\ldots, X_{k} ^{-1}]$ by $A^e$.  Let $\textbf{\underline{x}}:=\textbf{u}_1,\ldots,\textbf{u}_{\ell} $ be a monomial parameter sequence on $A^e$.
Take $\textbf{v}_j\in A$ be such that  $\textbf{u}_j=\textbf{v}_j\textbf{w}_j$, where $\textbf{w}_j$ is a monomial in terms of
$X_i$ and $X_i^{-1}$ and look at $\textbf{\underline{y}}:=\textbf{v}_1,\ldots,\textbf{v}_{\ell}$. In view of Lemma \ref{par}, $\textbf{\underline{y}}$ is a monomial parameter sequence in $A^e$, since $\rad(\underline{y})A^e =\rad(\underline{x})A^e$. Keep in mind that $A\to A^e$ is faithfully flat. Again, by Lemma \ref{par}, $\textbf{\underline{y}}$ is a monomial parameter sequence in $A$. Thus,
$\textbf{y}$ is a regular sequence in $A$. It turns out that $\textbf{y}$ is  a regular sequence in $A^e$, because $A\to A^e$ is faithfully flat.
In view of Lemma \ref{par}, $\textbf{x}$ is a regular sequence in $A^e$.
\end{proof}

\begin{lemma}\label{mmmain} Let $k$ be a field and $H\subseteq \mathbb{Z}^n$ a positive and normal semigroup. Then
any monomial parameter sequence of
$k[H]$ is regular.
\end{lemma}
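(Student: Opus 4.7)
The plan is to reduce immediately to the setting of Theorem \ref{pro} by invoking Theorem \ref{prod}, and then to observe that the argument for Theorem \ref{pro} only depends on non-vanishing of top local cohomology, which is already encoded in the definition of a parameter sequence.

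First, I would apply Theorem \ref{prod} to produce a direct system $\{A_n : n \in \mathbb{N}\}$ of Noetherian polynomial rings over $k$ with toric transition maps such that $k[H]$ is a direct summand of $A := \varinjlim_n A_n$. Given a monomial parameter sequence $\underline{x} = x_1,\ldots,x_\ell$ in $k[H]$, Definition \ref{defhm} guarantees $H^\ell_{\underline{x}}(k[H])_\mathfrak{p} \neq 0$ for some $\mathfrak{p} \in V(\underline{x}k[H])$, and in particular $H^\ell_{\underline{x}}(k[H]) \neq 0$. The direct summand property then forces $H^\ell_{\underline{x}}(A) \neq 0$.

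From here I would follow the path of Theorem \ref{pro} verbatim: using commutation of \v{C}ech cohomology with direct limits, find a cofinal subset $\Delta$ such that $\cd(\underline{x}A_\delta) = \ell$ for all $\delta \in \Delta$; use toricity to ensure $\underline{x}$ stays monomial in $A_\delta$; apply Lemma \ref{lyu} together with the Taylor resolution bound to sandwich $\pd(A_\delta/\underline{x}A_\delta)$ between $\ell$ and $\ell$; replay the sandwich on each sub-sequence via Lemma \ref{need} to get $\pd(A_\delta/(x_{i_1},\ldots,x_{i_k})A_\delta) = k$; invoke Theorem \ref{pd2} to declare $\underline{x}$ regular in $A_\delta$, hence in $A$; and finally transfer regularity back to $k[H]$ by purity (via \cite[Lemma 6.4.4(c)]{BH}) since $k[H]$ is a direct summand of $A$.

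The one point that might look like an obstacle is that Theorem \ref{pro} is formally stated for \emph{strong} parameter sequences, whereas Lemma \ref{mmmain} assumes only a parameter sequence. Inspecting the proof of Theorem \ref{pro} one sees that only the non-vanishing of the single module $H^\ell_{\underline{x}}(R)$ is ever used (the cohomological-dimension computations for sub-sequences $x_{i_1},\ldots,x_{i_k}$ come for free from Lemma \ref{need}, not from a strong parameter sequence hypothesis). Hence the strong hypothesis can be dropped, and the proof proceeds with no additional input.
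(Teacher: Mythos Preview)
Your argument is correct and essentially coincides with the paper's own proof: apply Theorem \ref{prod} to embed $k[H]$ as a direct summand of a direct limit of polynomial rings with toric maps, then invoke (the proof of) Theorem \ref{pro}. Your explicit observation that the proof of Theorem \ref{pro} only uses $H^\ell_{\underline{x}}(R)\neq 0$, and hence goes through for a mere parameter sequence rather than a strong one, is exactly the point that makes the citation of Theorem \ref{pro} legitimate here; the paper acknowledges the same phenomenon in the Remark following Theorem \ref{main}.
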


\begin{proof}
In view of Theorem \ref {prod}, there is a direct system $\{A_\gamma:\gamma\in \Gamma\}$  of Noetherian regular domains containing $k$ such that $k[H]$ is a direct summand of ${\varinjlim}_{\gamma\in \Gamma} A_\gamma$. By construction, $A_\gamma\to A_\delta$ is toric for all $\gamma\leq\delta$.  The claim follows by  Theorem \ref {pro} and Lemma \ref{lo}.
\end{proof}

 The proof of the next result is exactly  similar to the affine case. For the convenience of the reader we give its proof.

\begin{lemma}\label{ho}
Let $C$ be a normal subsemigroup of $\mathbb{Z}^n$. Then $ C\cong \mathbb{Z}^k\oplus C^\prime$, where $C^\prime$ is isomorphic to a positive normal semigroup.
\end{lemma}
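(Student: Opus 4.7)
The plan is to isolate the subgroup of invertible elements of $C$ and show it splits off cleanly. Let $U:=\{u\in C\mid -u\in C\}$ be the group of units of $C$; this is a subgroup of the ambient group $G:=C-C\subseteq\mathbb{Z}^n$. Since $G$ is finitely generated free abelian, I would aim to show that $G=U\oplus G'$ for some complementary subgroup $G'$, and then let $C':=C\cap G'$. The decomposition will transfer to $C$, giving $C=U\oplus C'$, and because $U\subseteq\mathbb{Z}^n$ is free abelian of some rank $k$, this yields $C\cong\mathbb{Z}^k\oplus C'$.

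The key step is to prove that $U$ is a pure subgroup of $G$. Suppose $mg\in U$ with $m\geq 1$ and $g\in G$. Writing $g=c_1-c_2$ with $c_i\in C$, we have $m(c_1-c_2)=mg\in U\subseteq C$; normality of $C$ (Definition \ref{deff}(iii)) forces $c_1-c_2\in C$, so $g\in C$. Applying the same argument to $-mg\in U\subseteq C$ gives $-g\in C$, hence $g\in U$. Since $G$ is finitely generated free abelian, the pure subgroup $U$ is a direct summand, giving the decomposition $G=U\oplus G'$.

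Next, I would let $\pi\colon G\to G'$ be the projection and put $C':=C\cap G'$. For any $c\in C$, write $c=u+c'$ with $u\in U$, $c'\in G'$. Since $-u\in C$, we get $c'=c-u\in C$, so $c'\in C'$. Thus the semigroup map $U\oplus C'\to C$, $(u,c')\mapsto u+c'$, is surjective, and it is injective because $U\cap G'=0$. So $C\cong U\oplus C'$ as semigroups.

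It remains to verify that $C'$ is positive and normal. If $c'\in C'$ is invertible in $C'$, then $c',-c'\in C\cap G'$, so $c'\in U\cap G'=0$, giving positivity. For normality, suppose $c'_1,c'_2\in C'$ and $m(c'_1-c'_2)\in C'\subseteq C$; normality of $C$ gives $c'_1-c'_2\in C$, and since $c'_1-c'_2\in G'$, it lies in $C\cap G'=C'$. The main obstacle in this argument is the purity step, which is exactly the place where normality of $C$ is essential; everything else is bookkeeping within the split short exact sequence of abelian groups.
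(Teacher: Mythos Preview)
Your proof is correct and is essentially the same as the paper's: both isolate the group $U$ of invertible elements, use normality to show $U$ is pure in $G=C-C$ (equivalently, $G/U$ is torsion-free), split $G=U\oplus G'$, and set $C'=C\cap G'$. The only cosmetic differences are that the paper assumes $G=\mathbb{Z}^n$ at the outset and leaves the verification that $C'$ is positive and normal to the reader, whereas you spell this out.
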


\begin{proof}
    Without loss of the generality, we may assume that $C-C = \mathbb{Z}^n$. Let $H$ be the set of all elements of $C$ with additive inverse in $C$. Then $H$ is a subgroup of $\mathbb{Z}^n$, and so $H \cong \mathbb{Z}^k$ for some $k \in\mathbb{N}$.
     Suppose $\beta \in \mathbb{Z}^n = C-C$ and $\ell\beta \in H$. Then $\ell(-\beta)\in H$ as well. Both $\beta$ and $-\beta$ are in $\mathbb{Z}^n = C-C $. It follows that $\beta$ and $-\beta$ are both in $C$. So $\beta \in H$, as required. Thus, $\mathbb{Z}^n /H$ is a finitely generated torsion-free group.  Conclude that it is free. Thus,
$$0\longrightarrow H \longrightarrow \mathbb{Z}^n \longrightarrow \mathbb{Z}^n /H \longrightarrow 0$$
splits. Let $H^\prime$ be a free complement for $H$ in $\mathbb{Z}^n$. Every element $\beta \in C$ can be expressed uniquely as $\alpha +\alpha ^\prime$ where $ \alpha \in H$ and $ \alpha ^\prime \in  H^\prime$. But $-\alpha \in C$, and so $\alpha ^\prime \in C$. Thus $C = H \oplus  C^\prime$, where $ C^ \prime = C \cap H^ \prime$.  An easy computation shows that $C^ \prime$ is positive and normal.
\end{proof}

\begin{theorem}\label{mmain} Let $k$ be a field and $H\subseteq \mathbb{Z}^n$ be a normal semigroup. Then
any monomial parameter sequence of
$k[H]$ is a regular sequence.
\end{theorem}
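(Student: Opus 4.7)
The plan is to reduce immediately to the positive normal case already handled by Lemma \ref{mmmain}, using the structural decomposition in Lemma \ref{ho} and the Laurent-extension stability in Lemma \ref{lo}.

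First I would apply Lemma \ref{ho} to write $H \cong \mathbb{Z}^k \oplus H'$, where $H' \subseteq \mathbb{Z}^{n-k}$ is a positive normal semigroup. At the level of semigroup algebras this decomposition translates into an isomorphism of $\mathbb{Z}^n$-graded $k$-algebras
\[
k[H] \;\cong\; k[H']\bigl[X_1,\ldots,X_k,X_1^{-1},\ldots,X_k^{-1}\bigr],
\]
where the variables $X_i$ correspond to a $\mathbb{Z}$-basis of the group summand $\mathbb{Z}^k$. This is precisely the setup needed to invoke Lemma \ref{lo}.

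Next, by Lemma \ref{mmmain} every monomial parameter sequence of $k[H']$ is already known to be a regular sequence, because $H'$ is positive and normal. Feeding this into Lemma \ref{lo} with $A=k[H']$ yields that any monomial parameter sequence of the Laurent extension $A[X_1,\ldots,X_k,X_1^{-1},\ldots,X_k^{-1}]$, and hence of $k[H]$, is regular. This completes the proof.

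There is essentially no obstacle here: all the real work has been done in the preceding sections (the Lazard-type decomposition in Section~3, the toric direct system of Theorem \ref{prod}, the projective-dimension criterion Theorem \ref{pd2}, the invariant-theory argument Theorem \ref{pro}, and the Laurent-passage Lemma \ref{lo}). The only minor thing to verify is that the isomorphism produced from Lemma \ref{ho} respects the monomial grading, so that \emph{monomial} parameter sequences on $k[H]$ correspond to \emph{monomial} parameter sequences on the Laurent extension of $k[H']$; this is automatic from the fact that the group summand $\mathbb{Z}^k$ is spanned by elements of $H$ together with their additive inverses, so the inverted variables are themselves homogeneous monomials in the $H$-grading.
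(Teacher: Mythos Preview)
Your proposal is correct and follows essentially the same approach as the paper: apply Lemma \ref{ho} to split off the $\mathbb{Z}^k$ factor, identify $k[H]$ with the Laurent polynomial extension of $k[H']$, invoke Lemma \ref{mmmain} for the positive normal piece $H'$, and finish with Lemma \ref{lo}. The paper's proof is essentially these four lines; your added remark about the grading being respected is a reasonable clarification but not something the paper spells out.
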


\begin{proof}
In view of Lemma \ref{ho},  $H\cong \mathbb{Z}^k\oplus H^\prime$, where $H^\prime$ is isomorphic to a positive normal semigroup.
It is easy to see that $$k[H]\cong k[\mathbb{Z}^k\oplus H^\prime]\cong k[H^\prime][X_1,\ldots, X_k,X_{1} ^{-1},\ldots, X_{k} ^{-1}].$$
By Lemma \ref{mmmain}, any monomial parameter sequence  is  a regular sequence.  We use Lemma \ref{lo} to deduce the claim.
\end{proof}

\section{All together now: The Proof of Theorem 1.1}

We start this section by the following.

\begin{notation}
Denote $\mathbb{N}^\infty := \bigcup_{s\in \mathbb{N}} \mathbb{N}^s$ and  $\bigcup_{s\in \mathbb{N}} \mathbb{Z}^s$  by $\mathbb{Z}^\infty$.
\end{notation}

\begin{definition}\label{ful}
Let  $ H\subseteq \mathbb{Z}^\infty$ be a  semigroup.
\begin{enumerate}
\item[(i)] Suppose  $\alpha, \alpha^\prime \in H$ and there is $k \in \mathbb{N}$ such that $k(\alpha -\alpha^\prime) \in H$.
We say  $H$ is normal,  if $\alpha -\alpha^\prime \in H$.
\item[(ii)] Suppose $\alpha, \alpha^\prime \in H$ and $\alpha -\alpha^\prime \in  \mathbb{N}^\infty $. We say $H$ is full,
if $\alpha -\alpha^\prime \in H$.\end{enumerate}
\end{definition}

\begin{definition}\label{cmhm}
A ring is called Cohen-Macaulay in the sense of Hamilton-Marley, if any  of its
strong parameter sequence is a regular sequence.
\end{definition}

\begin{lemma}\label{pdhm}
Let $\{A_\gamma:\gamma\in \Gamma\}$ be a direct family of Cohen-Macaulay rings in the sense of Hamilton-Marley.
If $A_\delta\to A_\gamma$ is pure for all $\delta\leq\gamma$, then $A:={\varinjlim}_{\gamma\in \Gamma} A_\gamma$ is
a Cohen-Macaulay ring in the sense of Hamilton-Marley.
\end{lemma}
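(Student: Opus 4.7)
The plan is to (i) transfer the strong parameter sequence condition from $A$ down to a cofinal subsystem of the $A_\gamma$, (ii) apply the Cohen--Macaulay hypothesis at each such $A_\gamma$ to obtain regularity there, and (iii) lift regularity back up to $A$ via direct-limit exactness of the Koszul complex.

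Let $\underline{x}=x_1,\ldots,x_\ell$ be a strong parameter sequence on $A$. Since $\underline{x}$ is finite and $\Gamma$ is directed, there is $\gamma_0\in\Gamma$ with every $x_i\in A_{\gamma_0}$. For $\gamma\geq\gamma_0$ the map $A_\gamma\to A$ is itself pure: if $M$ is an $A_\gamma$-module and $m\in M$ satisfies $m\otimes 1=0$ in $M\otimes_{A_\gamma}A=\varinjlim_{\gamma'\geq\gamma}M\otimes_{A_\gamma}A_{\gamma'}$, then the vanishing already occurs at some finite stage $M\otimes_{A_\gamma}A_{\gamma'}$, and purity of $A_\gamma\to A_{\gamma'}$ forces $m=0$.

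Because the Koszul complex $K_\bullet(\underline{x}^n,-)$ and the $\check{C}$ech complex $\check{C}(\underline{x}_k,-)$ are finite complexes of finite-rank free modules, they commute with the filtered colimit:
\[H_i\bigl(K_\bullet(\underline{x}^n,A)\bigr)=\varinjlim_{\gamma}H_i\bigl(K_\bullet(\underline{x}^n,A_\gamma)\bigr),\qquad H^k_{\underline{x}_k}(A)=\varinjlim_\gamma H^k_{\underline{x}_k}(A_\gamma).\]
Properness $(\underline{x}_k)A_\gamma\neq A_\gamma$ is immediate from injectivity of $A_\gamma\to A$. Weak proregularity on $A$ translates, by the first identity together with a pointwise cofinality argument on Koszul cycles, to weak proregularity on some $A_\delta$ for $\delta$ in a cofinal subset of $\Gamma$. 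For the non-vanishing condition at a prime $\fq\in\V(\underline{x}_kA_\delta)$, use that pure ring maps induce surjective maps on $\Spec$: pick $\fP\in\Spec A$ with $\fP\cap A_\delta=\fq$, so $\fP\in\V(\underline{x}_kA)$ and $H^k_{\underline{x}_k}(A)_\fP\neq0$. The base-change identity
\[H^k_{\underline{x}_k}(A)_\fP=\varinjlim_{\gamma'\geq\delta}\bigl(H^k_{\underline{x}_k}(A_{\gamma'})\otimes_{A_{\gamma'}}A_\fP\bigr)\]
then forces $H^k_{\underline{x}_k}(A_{\gamma'})_{\fP\cap A_{\gamma'}}\neq0$ for $\gamma'$ in a cofinal subset. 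Assembling these ingredients into one cofinal $\Delta\subseteq\Gamma$, the sequence $\underline{x}$ becomes a strong parameter sequence on each $A_\delta$ with $\delta\in\Delta$; the Cohen--Macaulay hypothesis gives $H_i(K_\bullet(\underline{x},A_\delta))=0$ for $i\geq 1$, and passing to the filtered colimit over $\Delta$ yields $H_i(K_\bullet(\underline{x},A))=0$ for $i\geq 1$, so $\underline{x}$ is a regular sequence on $A$.

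The main obstacle is extracting a single cofinal subset $\Delta$ for which the non-vanishing condition holds at \emph{every} prime of $\V(\underline{x}_kA_\delta)$ simultaneously, rather than only a cofinal set that depends on the prime. The Spec-surjectivity furnished by purity lifts primes individually, but uniformizing across all primes of $\V(\underline{x}_kA_\delta)$ requires additional care; the natural route is via a quasi-compactness argument on the supports of the local cohomology modules, using that $\V(\underline{x}_kA_\delta)$ is a closed subset of $\Spec A_\delta$ and that non-vanishing is an open-type condition along the pullbacks under the transition maps.
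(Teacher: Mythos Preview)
Your proposal has a genuine gap, and it is precisely the one you flag at the end: you never explain how to produce a single cofinal $\Delta$ on which \emph{every} prime of $\V(\underline{x}_k A_\delta)$ witnesses the non-vanishing. The ``open-type condition'' and ``quasi-compactness'' suggestions are not justified in this non-noetherian setting, and the colimit identity you wrote only tells you that for each prime there is \emph{some} $\gamma'$ that works, not a uniform one. The same difficulty already infects your weak-proregularity step: from $\varphi_{m,n,A}=0$ and $H_i(\mathbb{K}_\bullet(\underline{x}^m;A))=\varinjlim_\gamma H_i(\mathbb{K}_\bullet(\underline{x}^m;A_\gamma))$ you only get that each individual class dies after passing to some $A_{\gamma'}$, not that $\varphi_{m,n,A_\delta}=0$ on any particular $A_\delta$.

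The paper's proof sidesteps this completely by showing that $\underline{x}$ is a strong parameter sequence on \emph{every} $A_\delta$ with $\delta\geq\gamma_0$, not merely on a cofinal subset. The two ingredients are used more sharply than in your outline. For weak proregularity: purity of $A_\delta\to A$ makes $H_i(\mathbb{K}_\bullet(\underline{x}^n;A_\delta))\to H_i(\mathbb{K}_\bullet(\underline{x}^n;A))$ \emph{injective}; in the obvious commutative square this injectivity forces $\varphi_{m,n,A_\delta}=0$ as soon as $\varphi_{m,n,A}=0$, for that fixed $\delta$. For non-vanishing: given $\fp\in\V(\underline{x}_kA_\delta)$, lying-over along the pure map $A_\delta\to A$ produces $\fq\in\Spec A$ over $\fp$, and since top-degree \v{C}ech cohomology is a cokernel it base-changes along \emph{any} ring map, giving
\[
H^k_{\underline{x}_k}(A)_{\fq}\ \cong\ H^k_{\underline{x}_k}\bigl((A_\delta)_\fp\bigr)\otimes_{(A_\delta)_\fp} A_\fq,
\]
so non-vanishing of the left side forces $H^k_{\underline{x}_k}(A_\delta)_\fp\neq 0$. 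No cofinal extraction is needed anywhere; one fixed $\delta\geq\gamma_0$ already does the job, and regularity of $\underline{x}$ on $A_\delta$ then passes to $A$.
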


\begin{proof}
Let $\textbf{x}:=x_1,\ldots,x_n $ be a strong parameter sequence on $A$. Take $\gamma \in \Gamma$ be such that $\textbf{x}\in A_\delta $ for all $\delta \geq \gamma$. There exists an
integer $m \geq n$ such that the maps
$$\varphi_{m,n,A}:=H_{i}(\varphi^{m} _{n} (\underline{x};A)) :H_{i}
(\mathbb{K}_{\bullet}(\underline{x}^{m};A))\longrightarrow H_{i}
(\mathbb{K}_{\bullet}(\underline{x}^{n};A))$$ are zero for all $i
\geq 1$. By purity and in view of \cite[Ex. 10.3.31]{BH}, there is the following
commutative diagram

$$
\begin{CD}
0\longrightarrow H_{i}
(\mathbb{K}_{\bullet}(\underline{x}^{m};A_\delta)) @>>> H_{i}
(\mathbb{K}_{\bullet}(\underline{x}^{m};A))\\
\varphi_{m,n,A_\delta}
@VVV  \varphi_{m,n,A}@VVV  \\
0\longrightarrow H_{i}
(\mathbb{K}_{\bullet}(\underline{x}^{n};A_\delta)) @>>> H_{i}
(\mathbb{K}_{\bullet}(\underline{x}^{n};A))\\
\end{CD}
$$
with exact rows. Thus, $$H_{i}(\varphi^{m} _{n} (\underline{x};A_\delta)) :H_{i}
(\mathbb{K}_{\bullet}(\underline{x}^{m};A_\delta))\longrightarrow H_{i}
(\mathbb{K}_{\bullet}(\underline{x}^{n};A_\delta))$$ are zero for all $i
\geq 1$, i.e.,  $\textbf{x}$ is a weak proregular sequence on  $A_\delta $.
Let $\fp \in \Var(\textbf{x}A_\delta )$. There is $\fq \in \Spec (A)$ such that
$\fq \cap A_\delta= \fp $, because  $A_\delta  \hookrightarrow A$ is pure (note that the lying over property  is true for pure morphisms). Then,
\[\begin{array}{ll}
H_{\textbf{x}}^{n}(A)_{\fq}&\cong H_{\textbf{x}A}^{n}(A_\fq)\\
&\cong H_{\textbf{x}(A_\delta)_\fp}^{n}(A_\fq)\\
&\cong H_{\textbf{x}}^{n}((A_\delta) _\fp)\otimes_{(A_\delta )_\fp} A_{\fq}. \\
\end{array}\]
It yields that $H_{\textbf{x}}^{n}((A_\delta)_\fp) \neq 0$. Hence $\textbf{x}$ is a strong parameter sequence on $A_\delta$. So $\textbf{x}$ is a regular sequence on $A_\delta$. Therefore, $\textbf{x}$ is a regular sequence on $A= {\varinjlim}_{\gamma\in \Gamma} A_\gamma$.
\end{proof}

\begin{lemma}\label{wpdhm}
Let $\{A_\gamma:\gamma\in \Gamma\}$ be a direct family of Cohen-Macaulay graded rings with pure morphisms such that
their monomial parameter sequences are regular sequences. Then any monomial parameter sequence of $A:= {\varinjlim}_{\gamma\in \Gamma} A_\gamma$ is
a regular sequence.
\end{lemma}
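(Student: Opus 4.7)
The plan is to mimic the proof of Lemma \ref{pdhm} almost verbatim, with the only new bookkeeping being that the monomial (i.e.\ homogeneous) property is preserved along the direct system. Let $\textbf{\underline{x}}:=x_1,\ldots,x_n$ be a monomial parameter sequence on $A$. Since each $x_i$ is homogeneous, it lifts from some $A_{\gamma_i}$ along the natural map $A_{\gamma_i}\to A$; choosing $\gamma\in \Gamma$ dominating all $\gamma_i$ we may assume $\textbf{\underline{x}}$ already lies in $A_\delta$ for every $\delta\geq\gamma$. Because the transition maps in the direct system are graded, $\textbf{\underline{x}}$ is still a sequence of homogeneous (i.e.\ monomial) elements of each $A_\delta$.

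Next, I would show that for every such $\delta$ the sequence $\textbf{\underline{x}}$ remains a parameter sequence on $A_\delta$. Weak proregularity descends from $A$ to $A_\delta$ by the same diagram chase used in Lemma \ref{pdhm}: purity of $A_\delta\hookrightarrow A$ gives injections $H_i(\mathbb{K}_\bullet(\textbf{\underline{x}}^t;A_\delta))\hookrightarrow H_i(\mathbb{K}_\bullet(\textbf{\underline{x}}^t;A))$ compatible with the transition maps $\varphi^m_n$, so if the latter are zero in positive degrees then so are the former. The non-triviality $(\textbf{\underline{x}})A_\delta\neq A_\delta$ follows from purity and $(\textbf{\underline{x}})A\neq A$. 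For the local non-vanishing, take $\fp\in\V(\textbf{\underline{x}}A_\delta)$; by lying-over for pure morphisms pick $\fq\in\Spec(A)$ with $\fq\cap A_\delta=\fp$, and use the flat base-change/localisation chain
\[
H^n_{\textbf{\underline{x}}}(A)_\fq\cong H^n_{\textbf{\underline{x}}}((A_\delta)_\fp)\otimes_{(A_\delta)_\fp}A_\fq
\]
written in Lemma \ref{pdhm}, combined with $H^n_{\textbf{\underline{x}}}(A)_\fq\neq 0$ (since $\textbf{\underline{x}}$ is a parameter sequence on $A$), to conclude $H^n_{\textbf{\underline{x}}}((A_\delta)_\fp)\neq 0$. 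Hence $\textbf{\underline{x}}$ is a monomial parameter sequence on $A_\delta$.

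By the hypothesis imposed on the family, $\textbf{\underline{x}}$ is then a regular sequence in $A_\delta$. Since filtered colimits are exact and the Koszul complex commutes with direct limits,
\[
H_i(\mathbb{K}_\bullet(\textbf{\underline{x}};A))\;\cong\;\varinjlim_{\delta\geq\gamma}H_i(\mathbb{K}_\bullet(\textbf{\underline{x}};A_\delta))=0
\]
for all $i\geq 1$, together with $(\textbf{\underline{x}})A\neq A$, which shows that $\textbf{\underline{x}}$ is a regular sequence on $A$.

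The only delicate point is the descent of the parameter-sequence conditions from $A$ to $A_\delta$, in particular the non-vanishing of the top local cohomology at primes of $A_\delta$ containing $(\textbf{\underline{x}})$; this is the place where purity of the transition maps (and consequently of $A_\delta\hookrightarrow A$) is essential, and it is precisely the step already carried out in Lemma \ref{pdhm}. The rest is a transcription, with the remark that the graded structure guarantees that the monomial hypothesis of the family applies to $\textbf{\underline{x}}$ in each $A_\delta$.
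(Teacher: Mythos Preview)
Your approach is exactly the paper's: its proof of Lemma~\ref{wpdhm} reads in full ``The proof is similar to the proof of Lemma~\ref{pdhm}'', and you have written out precisely that transcription, together with the bookkeeping that homogeneity of the $x_i$ is preserved along the graded direct system so that the hypothesis on monomial parameter sequences in each $A_\delta$ applies.

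One small caution on the very last step. You deduce regularity of $\underline{x}$ on $A$ from the vanishing $H_i(\mathbb{K}_\bullet(\underline{x};A))=0$ for $i\ge 1$ together with $(\underline{x})A\neq A$; but Koszul-acyclicity does not in general force a sequence to be regular in a non-Noetherian ring. It is both simpler and what the proof of Lemma~\ref{pdhm} actually does to argue directly: once $\underline{x}$ is regular in every $A_\delta$ for $\delta$ in a cofinal set, any relation $a\,x_{i+1}\in(x_1,\dots,x_i)A$ already holds in some $A_\delta$, hence $a\in(x_1,\dots,x_i)A_\delta\subseteq(x_1,\dots,x_i)A$, and $x_1$ is a nonzerodivisor in $A$ since it is one in each $A_\delta$.
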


\begin{proof}
The proof is similar to the proof of Lemma \ref{pdhm}.
\end{proof}

The preparation of Theorem 1.1 in the introduction is finished. Now,
we proceed to the proof of it. We repeat Theorem 1.1 to give its
proof.

\begin{theorem}\label{main} Let $k$ be a field and $H\subseteq \mathbb{Z}^\infty$ be a normal semigroup. Then
any monomial parameter sequence of
$k[H]$ is a regular sequence.
\end{theorem}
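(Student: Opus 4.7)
The plan is to reduce the infinite-dimensional statement to the finite-dimensional Theorem~\ref{mmain} by means of the direct-limit machinery of Lemma~\ref{wpdhm}. Since $\mathbb{Z}^\infty = \bigcup_{s\in\mathbb{N}} \mathbb{Z}^s$, I would set $H_s := H \cap \mathbb{Z}^s$. This gives $H = \bigcup_{s\in\mathbb{N}} H_s$ and, on passing to semigroup rings, an isomorphism $k[H] = \varinjlim_{s\in\mathbb{N}} k[H_s]$ of $H$-graded $k$-algebras.

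I would then verify that each $H_s$ is a normal subsemigroup of $\mathbb{Z}^s$: if $\alpha,\alpha' \in H_s$ and $m(\alpha-\alpha') \in H_s$ for some positive integer $m$, then $m(\alpha-\alpha') \in H$ forces $\alpha-\alpha' \in H$ by the normality of $H$; since $\alpha - \alpha' \in \mathbb{Z}^s$, it belongs to $H_s$. Consequently, Theorem~\ref{mmain} applies to each $k[H_s]$, and every monomial parameter sequence of $k[H_s]$ is a regular sequence.

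Next I would show that each inclusion $H_s \subseteq H_{s+1}$ is a full subsemigroup extension in the sense of Definition~\ref{deff}(iv): if $\alpha,\alpha' \in H_s$ and $\alpha - \alpha' \in H_{s+1}$, then $\alpha - \alpha' \in \mathbb{Z}^s \cap H = H_s$. Lemma~\ref{dir} then exhibits $k[H_s]$ as a direct summand of $k[H_{s+1}]$, so the transition morphism is pure. All the hypotheses of Lemma~\ref{wpdhm} are now in place: a direct system of graded rings whose monomial parameter sequences are regular sequences, with pure transition maps. The lemma yields that every monomial parameter sequence of $k[H] = \varinjlim_s k[H_s]$ is a regular sequence, completing the proof.

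The main obstacle here is conceptual rather than technical: one must confirm that the two distinct uses of the words \emph{normal} and \emph{full} (the self-contained Definition~\ref{ful} for $\mathbb{Z}^\infty$-semigroups versus the relative Definition~\ref{deff}(iii)--(iv) for $\mathbb{Z}^n$-semigroups) are compatible with the filtration $H = \bigcup_s H_s$. Both checks are elementary set-theoretic manipulations, and no additional homological input beyond Theorem~\ref{mmain} and Lemma~\ref{wpdhm} is required, since all the heavy lifting was done in the preceding sections.
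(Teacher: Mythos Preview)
Your proposal is correct and follows essentially the same route as the paper's own proof: filter $H$ by $H_s = H\cap\mathbb{Z}^s$, check that each $H_s$ is normal so that Theorem~\ref{mmain} applies, verify that $H_s\subseteq H_{s+1}$ is full (hence the transition maps are pure), and conclude via Lemma~\ref{wpdhm}. The only difference is that you spell out the normality and fullness verifications explicitly, whereas the paper leaves them as one-line remarks.
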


\begin{proof}
For each $n$, define $H(n):=H\cap \mathbb{Z}^n$.
In view of Definition \ref{ful}, $H(n)\subseteq H(n+1)$ is full. This yields that $k[H(n)]\subseteq k[H(n+1)]$
is pure. Clearly, $H(n)\subseteq \mathbb{Z}^n$ is normal.
By Theorem  \ref{mmain}, any monomial parameter sequence of $K[H(n)]$ is a regular sequence.
Thus, we are in the situation of Lemma \ref{wpdhm}, and so
any monomial parameter sequence of
$k[H]$ is a regular sequence.
\end{proof}

\begin{remark} In the proof of Theorem \ref{main} we use
only the properties 1) and 3) of Definition \ref{defhm}
but not 2).
\end{remark}

\section{An example in practise: quasi rational plane cones}

One source of producing  2-dimensional  Cohen-Macaulay rings is the Serre's characterization of
normality in terms of his conditions $(S_2)$ and $(R_1)$. Let $C \subseteq \mathbb{Z}^2$ be a normal semigroup.
It implies that $k[C]$ is Cohen-Macaulay.
Note that Serre's characterization of
normality is a result about noetherian rings. In fact there are 2-dimensional non-noetherian  normal integral domains
that they are not Cohen-Macaulay in a sense. We can take such rings that come from a normal semigroup $C \subseteq \mathbb{Z}^2$.

$$\textmd{Subsection 8.1: Convenience}$$

\begin{discussion}\label{disdef}
Let $f:\mathbb{R}^2\to \mathbb{R}$ be a linear form. The open half space associated to $f$
defined by $$H^>_f:=\{x\in \mathbb{R}^2:f(x)> 0\}.$$ The closed half space associated to $f$
defined by $$H^{\geq}_f:=\{x\in \mathbb{R}^2:f(x)\geq 0\}.$$

Let $L_1$ and $L_2$ be two half spaces define by the linear forms $l_1$ and $l_2$ with  rational slopes.
Half spaces are not  necessarily closed.
By a \textit{quasi-rational plane cone}, we mean $L_1\cap L_2$.
We assume that  $L_1\cap L_2$ is positive.  Through this section
$D$ is the lattice point of a quasi-rational plane cone.
We are interested  on semigroups $C\subseteq D$ such that the extension is  full and integral.
The reason of this interest is because of the Subsection 8.5.  
\end{discussion}

\begin{notation} Let  $l_1$ and $l_2$  be two half-lines in  the plane cross to origin.
 We denote  the convex section that $l_1$ and $l_2$ generate by $\conv(l_1,l_2)$.
 We denote the anti-clock  angel from $l_2$ to $l_1$ by $\angle(l_2,l_1)$.
\end{notation}

\begin{proposition}\label{ch}
Let $C$ be a normal submonoid of $\mathbb{Z}^2$ defined by Discussion \ref{disdef} and suppose that  $C$ is not finitely generated and  $C-C = \mathbb{Z}^2$. Then $C$ is  isomorph to one of the following semigroups.
\begin{enumerate}
\item[(i)]$ H := \{(a,b) \in \mathbb{N}_0^2| 0 \leq b/a < \infty\} \cup \{(0,0)\}$ or a full semigroup $M$ of $H$ such that for each $(a,b) \in H$, there exists $k \in \mathbb{N}$ such that $k(a,b) \in M$.
\item[(ii)] $ H^{\prime} := \{(a,b) \in \mathbb{N}^2| 0 < b/a < \infty\} \cup \{(0,0)\}$ or a full semigroup $N$ of $H^{\prime}$ such that for each $(a,b) \in H^{\prime}$, there exists $k \in \mathbb{N}$ such that $k(a,b) \in N$.
\item[(iii)]$ H_1 := \{(a,b) \in \mathbb{Z}^2|  b \in \mathbb{N}_0,\; if \;a \; is \; negative\; b\neq 0\} \cup \{(0,0)\}$ or a full semigroup $M$ of $H_i$ such that for each $(a,b) \in H_1$, there exists $k \in \mathbb{N}$ such that $k(a,b) \in M$.
    \item[(iv)]$ H_2 := \{(a,b) \in \mathbb{Z}^2|  b \in \mathbb{N}\} \cup \{(0,0)\}$, or a full semigroup $M$ of $H_2$ such that for each $(a,b) \in H_2$, there exists $k \in \mathbb{N}$ such that $k(a,b) \in M$.
\end{enumerate}
\end{proposition}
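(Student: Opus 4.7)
The plan is to read off $C$ from the geometry of the rational cone $\sigma := \mathbb{Q}_{\geq 0} C \subseteq \mathbb{R}^2$ generated by $C$: the hypothesis $C - C = \mathbb{Z}^2$ forces $\sigma$ to be two-dimensional, positivity of $C$ forces $\sigma$ to be pointed (contains no line), and the defining half-spaces with rational slopes equip $\sigma$ with two extremal rays $r_1$ and $r_2$, each spanned by a primitive lattice vector. The relevant discrete invariants are the angle $\theta := \angle(r_1, r_2) \in (0,\pi]$ and the pattern of which boundary rays contribute lattice points to $C$. Gordan's Lemma together with the non--finite-generation hypothesis rules out both defining half-spaces being closed, so at most one extremal ray contributes its nonzero lattice points to $C$. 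Matching $(\theta, \text{inclusion pattern})$ against the four listed possibilities then gives cases (i)--(iv).

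The first step I would carry out is the identification $C = (\sigma \cap \mathbb{Z}^2) \setminus \{\text{lattice points on excluded rays}\}$, using normality and $C - C = \mathbb{Z}^2$. For any $x \in \sigma \cap \mathbb{Z}^2$ in the relative interior of $\sigma$, writing $x = \sum q_i c_i$ with $c_i \in C$, $q_i \in \mathbb{Q}_{\geq 0}$, and clearing denominators produces a positive integer $k$ with $kx \in C$; since $x \in \mathbb{Z}^2 = C - C$, Definition \ref{deff}(iii) forces $x \in C$. The same argument applied on an extremal ray shows that a lattice point on that ray lies in $C$ if and only if the ray's primitive vector does, so each extremal ray is either entirely inside $C \setminus \{0\}$ or entirely outside $C$. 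Consequently $C$ is determined by the pair (cone $\sigma$, set of included extremal rays).

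Next I would construct explicit semigroup embeddings into the standard semigroups. For $\theta < \pi$, let $v_1, v_2$ be the primitive lattice vectors on the two extremal rays and set $d := |\det(v_1, v_2)|$. Define $\phi \in GL_2(\mathbb{Q})$ by $\phi(v_i) = e_i$ and $\psi := d\phi$; then $\psi : \mathbb{Z}^2 \hookrightarrow \mathbb{Z}^2$ is $\mathbb{Z}$-linear and carries $\sigma$ into the first quadrant, so $\psi(C)$ is a subsemigroup of $H$ (if exactly one extremal ray is included in $C$) or of $H'$ (if neither is). Fullness of $\psi(C)$ in the respective standard semigroup reduces to the inclusion $\sigma \cap \mathbb{Z}^2 \subseteq C$ from the previous step: $\psi(c) - \psi(c') \in H$ (or $H'$) implies $c - c' \in \sigma \cap \mathbb{Z}^2 \subseteq C$, whence $\psi(c - c') \in \psi(C)$. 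The integral condition $d(a,b) = \psi(a v_1 + b v_2) \in \psi(C)$ for each $(a,b)$ in the standard semigroup follows for the same reason, since $a v_1 + b v_2 \in \sigma \cap \mathbb{Z}^2 \subseteq C$. For $\theta = \pi$, the cone $\sigma$ is a half-plane, and extending a primitive boundary vector $v_1$ to a $\mathbb{Z}$-basis $\{v_1, v_3\}$ of $\mathbb{Z}^2$ with $v_3$ in the interior of $\sigma$ gives $\psi \in GL_2(\mathbb{Z})$, identifying $C$ with $H_2$ (when no boundary ray lies in $C$) or with $H_1$ (when one boundary ray lies in $C$, after the reflection $(a,b) \mapsto (-a,b)$ if necessary).

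The main technical subtlety will be the $\theta < \pi$ case with index $d > 1$: here $\psi(C)$ is a proper full subsemigroup of $H$ or $H'$ rather than all of it, and this is precisely the alternative ``or a full semigroup $M$ of $H$ such that \ldots'' appearing in the statement. The remaining exclusions are routine: the configuration ``both extremal rays included'' when $\theta < \pi$ would make $C$ equal to the lattice points of a closed rational cone, hence finitely generated by Gordan's Lemma and contradicting the hypothesis; and ``both sides of the boundary line included'' when $\theta = \pi$ would put some nonzero element of $C$ together with its additive inverse in $C$, violating positivity.
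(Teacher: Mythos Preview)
Your proof is correct and follows essentially the same route as the paper: both argue geometrically by considering the cone generated by $C$, splitting on whether the angle between the extremal rays is less than $\pi$ or equal to $\pi$, and then applying a rational linear change of coordinates sending the extremal rays to the coordinate axes (respectively to the $x$-axis) to land in one of the standard semigroups $H$, $H'$, $H_1$, $H_2$. Your version is somewhat more explicit---you use the determinant $d=|\det(v_1,v_2)|$ as the scaling factor rather than appealing to finite generation of the closed cone, and you spell out the verification of fullness, the integrality condition, and the exclusion of the ``both rays included'' case via Gordan's Lemma---whereas the paper's proof leaves these points to the reader.
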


\begin{proof}
 First suppose that $\pi/2<\angle(l_2,l_1)<\pi$.
 Set $M:=\conv(l_1,l_2)\cap\mathbb{Z}^2$.  Then by
\cite[Corollary 2.10]{BG}, $M$ is finitely generated.
Choose $p_i$ in $M$ with the property that they are first integer point of $l_i$ from the origin.
Note that $M\subset\mathbb{Q}^+p_1 + \mathbb{Q}^+p_2$. So there is an $n \in \mathbb{N}$ such that   for each point $P$ of $M$, $nP \in \mathbb{N}p_1+ \mathbb{N}p_2$. Define the linear map $$\psi: \mathbb{Q}^2 \to \mathbb{Q}^2$$via the assignments
$$p_1\mapsto n(1,0) \  \ \textit{ and }  \  \  p_2\mapsto n(0,1).$$ Then $\psi$ is an isomorphism. For each $m \in M$, write $m = q_1p_1+q_2p_2$ with $nq_1,nq_2 \in \mathbb{N}$. Hence $\psi(nm)=n^2q_1(1,0)+q_2n^2(0,1)$.
Conclude that  $\psi(M) \subseteq \mathbb{N}^2$. Furthermore, at least one of the axes dose'nt intersect with $\psi(C)$. Without loss of the generality, we may assume that this axis is the $y$-axis. Thus we are in the situation of  $(i)$ and $(ii)$

Secondly, suppose that $0<\angle(l_2,l_1)<\pi/2$. In this case, similar as the first case, we achieve the above items $(i)$ and $(ii)$.

Thirdly, suppose that $\angle(l_2,l_1)$  is $\pi$  radian. In this case, similar as above,  there exists a linear assignment $\eta$ such that  $\conv(l_1,l_2)$ maps isomorphically  to $W:= \{(\alpha , \beta ) \in \mathbb{Z}^2| \beta \geq 0\}$.
Thus we are in the situation of  $(iii)$ and $(iv)$.
\end{proof}

$$\textmd{Subsection 8.2.  Preliminary lemmas}$$

 We start with the following.

\begin{remark} \label{adt}Let $H$ be the normal semigroup $  \{(a,b) \in \mathbb{N}_0^2| 0 \leq b/a < \infty\} \cup \{(0,0)\}$ and let $k$ be a  field.
Note that $H$ is normal.
In view of \cite{ADT}\begin{enumerate}
\item[$\mathrm{(i)}$]$k[H]$ is not  Cohen-Macaulay in the sense of ideals. This means that there is an
ideal  $\fa$  such that $\Ht(\fa)\neq\pgrade(\fa, k[H])$, where  $\pgrade(\fa, k[H])$ is   the polynomial grade of $\fa$.
\item[$\mathrm{(ii)}$]$k[H]$ is not weak Bourbaki unmixed. This means that there is a  finitely generated
ideal  $\fa$ of height greater or equal than the minimal number of its generator  such that minimal prime ideals $\fa$  does not coincide  with the set of all weak associated prime ideals of $k[H]/\fa$.
\item[$\mathrm{(iii)}$]
 $k[H]$ is  Cohen-Macaulay in the sense of Hamilton-Marley.
\end{enumerate}
Also,   Cohen-Macaulayness is not closed under taking the direct limit, if we adopt
each of the above notion as a candida for definition of non-Noetherian Cohen-Macaulay rings. For more details see \cite{ADT}.
\end{remark}

We use the following several times in this paper.

\begin{lemma}(see \cite[Theorems 21.4 and 17.1]{Gi})\label{g}
Let $H\subseteq \mathbb{Z}^n$ be a semigroup with $\mathbb{Z}^n$ as a group that it generates. Then $\dim k[H]=\dim k[\mathbb{Z}^n]=n$.
\end{lemma}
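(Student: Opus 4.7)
The plan is to exploit that $k[\mathbb{Z}^n]$ arises as a localization of $k[H]$. Since $H$ generates $\mathbb{Z}^n$ as a group, every $g\in \mathbb{Z}^n$ admits a decomposition $g=h_1-h_2$ with $h_i\in H$, whence $X^g = X^{h_1}(X^{h_2})^{-1}$ lies in $S^{-1}k[H]$ where $S:=\{X^h:h\in H\}$. The first step is to verify that the natural map $S^{-1}k[H]\to k[\mathbb{Z}^n]$ is an isomorphism of $k$-algebras, so that $k[\mathbb{Z}^n]$ is literally a localization of $k[H]$.

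Next I would compute $\dim k[\mathbb{Z}^n] = n$ directly: realize $k[\mathbb{Z}^n]$ as a further localization of $k[x_1,\ldots,x_n]$ at the multiplicative set generated by the variables, so that its primes are in bijection with primes of $k[x_1,\ldots,x_n]$ disjoint from that set. The chain $0\subsetneq (x_1-1)\subsetneq \cdots \subsetneq (x_1-1,\ldots,x_n-1)$ survives and has length $n$, while the dimension cannot exceed $n=\dim k[x_1,\ldots,x_n]$ because localization cannot raise Krull dimension. Combined with the first paragraph, any length-$n$ chain in $k[\mathbb{Z}^n]=S^{-1}k[H]$ contracts to one of length $n$ in $k[H]$, yielding the lower bound $\dim k[H]\geq n$ at once.

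The hard part is the upper bound $\dim k[H]\leq n$, since $k[H]$ need not be noetherian and the embedding $k[H]\hookrightarrow k[\mathbb{Z}^n]$ is flat but not integral. My plan is to reduce to the finitely generated case by writing $H$ as a directed union $H=\bigcup_\alpha H_\alpha$ of finitely generated subsemigroups (possible since $H$ is countable). Each $k[H_\alpha]$ is a finitely generated $k$-subalgebra of $k[\mathbb{Z}^n]$, hence an integral domain whose fraction field has transcendence degree at most $n$ over $k$; Noether normalization for finitely generated integral $k$-algebras then yields $\dim k[H_\alpha]\leq n$. For a chain $\fp_0\subsetneq \cdots \subsetneq \fp_d$ in $k[H]$, choose $f_i\in \fp_i\setminus \fp_{i-1}$ and enlarge $H_\alpha$ so that every $f_i$ belongs to $k[H_\alpha]$; the contractions $\fp_i\cap k[H_\alpha]$ then form a strictly ascending chain of length $d$ in $k[H_\alpha]$, forcing $d\leq n$. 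This delicate contraction step, together with carefully controlling the ranks of $H_\alpha - H_\alpha$, is precisely where the results cited from Gilmer's book do the heavy lifting.
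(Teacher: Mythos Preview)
The paper does not supply its own proof of this lemma; it simply records the statement and defers entirely to Gilmer's monograph \cite[Theorems 21.4 and 17.1]{Gi}. Your proposal, by contrast, sketches a direct and essentially self-contained argument, so there is nothing in the paper to compare step-by-step against.

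Your argument is correct, and in fact complete --- the closing hedge is unwarranted. The contraction step is not delicate: having chosen $f_i\in\fp_i\setminus\fp_{i-1}$ and a finitely generated $H_\alpha$ large enough that every $f_i$ lies in $k[H_\alpha]$, the element $f_i$ itself witnesses $\fp_{i-1}\cap k[H_\alpha]\subsetneq \fp_i\cap k[H_\alpha]$, since $f_i$ belongs to the latter but not to $\fp_{i-1}$. No control on the rank of $H_\alpha-H_\alpha$ is needed for this; the bound $\dim k[H_\alpha]\le n$ already follows because $k[H_\alpha]$ is an affine $k$-domain whose fraction field sits inside $k(x_1,\ldots,x_n)$, so its transcendence degree (hence its Krull dimension, by Noether normalization) is at most $n$. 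Thus you have actually given a full elementary proof, while the paper is content to invoke the general dimension theory of commutative semigroup rings from Gilmer. One minor remark: the countability of $H$ plays no role, since every semigroup is the directed union of its finitely generated subsemigroups.
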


\begin{lemma}\label{2}
Let $H$ be as Remark \ref{adt} and let
$f\in k[H]$ be such that $f(0)\neq 0$.  If $\fp \in \Var_{k[x,y]}(f)$ is of height one, then there is $f_1\in k[H]$ such that $\fp=f_1k[x,y]$ and $f_1(0)\neq 0$. Also, $fk[H] = (f) k[x,y]  \cap k[H]$.
\end{lemma}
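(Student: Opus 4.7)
The plan is to work with the explicit description $k[H] = k + xk[x,y]$. Since $H$ consists of $(0,0)$ together with the lattice points $(a,b)$ with $a \geq 1$ and $b \geq 0$, the monomials $x^a y^b$ appearing in $k[H]$ are either the constant $1$ or have $x$-degree at least one. Consequently $k[H] = k + xk[x,y]$, and every $g\in k[x,y]$ admits a canonical decomposition $g = g_0(y) + xg_1(x,y)$ with $g_0\in k[y]$; membership $g\in k[H]$ amounts to $g_0\in k$. This decomposition is the working tool for both claims.

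For the existence of $f_1$, I would invoke that $k[x,y]$ is a UFD, so the height-one prime $\fp$ is principal, $\fp = gk[x,y]$ with $g$ irreducible and $g\mid f$ in $k[x,y]$. Writing $f = gh$ and decomposing $g = g_0(y) + xg_1(x,y)$, $h = h_0(y) + xh_1(x,y)$, the coefficient of $x^0$ in the product $gh$ equals $g_0(y)h_0(y)$. Since $f\in k[H]$, this product must lie in $k$, and since $f(0)\neq 0$ it must be a nonzero constant. In $k[y]$ a nonzero constant cannot factor as a product of two polynomials unless both factors are themselves nonzero constants, so $g_0\in k^\times$ and $h_0\in k^\times$. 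In particular $g\in k+xk[x,y]=k[H]$ with $g(0)\neq 0$, so one may take $f_1:=g$.

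For the equality $fk[H] = fk[x,y]\cap k[H]$, the inclusion $\subseteq$ is immediate. For $\supseteq$, given $u=fh\in k[H]$ with $h\in k[x,y]$, the same bookkeeping applies: decomposing $h = h_0(y)+xh_1(x,y)$, the $x$-degree-zero part of $u = fh$ is $f(0)\,h_0(y)$, which must lie in $k$ because $u\in k[H]$. Since $f(0)\neq 0$, this forces $h_0\in k$, hence $h\in k[H]$ and $u\in fk[H]$.

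No genuine obstacle is expected; the whole argument reduces to the single observation that $k[H]$ is the subring $k + xk[x,y]$ of $k[x,y]$, combined with the elementary fact that a nonzero constant in $k[y]$ cannot split nontrivially. The slightly delicate point is that the normalization $g(0)\neq 0$ of the generator of $\fp$ is not imposed by hand but is forced by the hypothesis $f(0)\neq 0$ together with the structural constraint $f\in k[H]$; this is why the conclusion $f_1\in k[H]$ and not merely $f_1\in k[x,y]$ comes out for free.
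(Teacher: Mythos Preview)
Your argument is correct. The paper does not actually supply its own proof of this lemma; it simply cites \cite[Lemma 4.9]{ADT} and the proof of \cite[Theorem 4.10]{ADT}. Your proof, based on the identification $k[H]=k+xk[x,y]$ and the observation that setting $x=0$ forces the $y$-only part of any factor of $f$ to be a nonzero constant, is exactly the kind of direct argument one expects lies behind that citation. Indeed, the same device is used explicitly elsewhere in the paper: in the proof of Lemma~\ref{H'} the authors introduce $k[\hat H]=k+yk[x,y]$ via the symmetry $m+xn\mapsto m+yn$, and in the proof of Lemma~\ref{h2} they run the analogous ``constant term'' bookkeeping in $k[x,x^{-1}]$ to show a generator of a height-one prime can be taken in $k[H_2]$. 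So your approach is not merely correct but is the template the paper itself relies on for the companion lemmas.
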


\begin{proof}
This is in \cite[Lemma 4.9]{ADT}  and the proof of  \cite[Theorem 4.10]{ADT}.
\end{proof}

\begin{lemma}\label{2.1}
Let $\underline{x}:=x_1,\ldots,x_n$ be a parameter sequence. Then $\Ht(\underline{x})\geq n$.
\end{lemma}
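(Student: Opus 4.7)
The strategy is to reduce the height bound to a statement about local cohomology at a minimal prime. More precisely, I would fix a minimal prime $\fp$ of $R$ over the ideal $(\underline{x})R$ and show that $\Ht(\fp)\geq n$; since $\Ht((\underline{x})R)=\inf\{\Ht(\fp):\fp\text{ minimal over }(\underline{x})R\}$, this yields the desired inequality.

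Condition (3) of Definition \ref{defhm} applied at such a $\fp$ gives $H^n_{\underline{x}}(R)_{\fp}\neq 0$. Since \v{C}ech cohomology commutes with localization, this equals $H^n_{\underline{x}R_\fp}(R_\fp)$, hence $\cd(\underline{x}R_\fp,R_\fp)\geq n$. Using that $\fp$ is minimal over $(\underline{x})R$, one has $\sqrt{\underline{x}R_\fp}=\fp R_\fp$ inside the local ring $R_\fp$; and because \v{C}ech cohomology only depends on the radical of the defining ideal, this promotes to $\cd(\fp R_\fp,R_\fp)\geq n$.

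The concluding step is a Grothendieck-type vanishing statement for the local ring $(R_\fp,\fp R_\fp)$: $H^i_{\fp R_\fp}(R_\fp)=0$ for $i>\dim R_\fp=\Ht(\fp)$. Combined with the lower bound $\cd(\fp R_\fp,R_\fp)\geq n$, this forces $\Ht(\fp)\geq n$, finishing the proof.

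The main obstacle lies in the last step when $R$ is non-noetherian, since Grothendieck's vanishing is classically a noetherian theorem. In the noetherian case the bound is immediate; in the non-noetherian setting of this paper one would need a suitable replacement, either by analyzing the \v{C}ech complex directly (which is tractable because weak proregularity, condition (1) of Definition \ref{defhm}, gives enough control on $H^n_{\underline{x}}$) or by reducing to the noetherian levels via the direct-limit description furnished by Theorem \ref{prod}.
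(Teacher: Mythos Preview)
The paper does not give a proof here; it simply cites \cite[Proposition 3.6]{HM}. Your outline is precisely the argument Hamilton and Marley use: take $\fp$ minimal over $(\underline{x})$, localize, and confront the nonvanishing $H^n_{\underline{x}}(R_\fp)\neq 0$ coming from condition (3) with a dimension bound on \v{C}ech cohomology of the local ring $R_\fp$.

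Two comments on your discussion of the obstacle. First, the needed vanishing $H^i_{\underline{y}}(A)=0$ for $i>\dim A$ holds for \v{C}ech cohomology over an arbitrary commutative ring $A$; no noetherian hypothesis is required, and weak proregularity is not used. Hamilton and Marley record this in their preliminary section, and it is the sole input beyond condition (3). So condition (1) of Definition~\ref{defhm} plays no role in this lemma, contrary to your first suggested workaround. Second, your alternative of reducing to noetherian levels via Theorem~\ref{prod} cannot work: that theorem manufactures a direct system only for rings of the form $k[H]$ with $H\subseteq\mathbb{Z}^n$ positive and normal, whereas the present lemma is stated for an arbitrary ring carrying a parameter sequence, so there is no such direct-limit description available in general.
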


\begin{proof}
See \cite[Proposition 3.6]{HM}.
\end{proof}

$$\textmd{Subsection 8.3:  Certain Cohen-Macaulay rings}$$

\begin{notation}
Let $f\in k[H]$ and $h\in H$. By $f_h$ we mean the coefficient of $ X^ h$ in $f$.
\end{notation}
\begin{discussion}
In this subsection we show all semigroups appear in Proposition \ref{ch} are Cohen-Macaulay.
The proofs have the same sprit, but different details. The strategy is as follows.
Take $f,g$ be a parameter sequence. Combining Lemmas \ref{2.1} and \ref{g} we have $\Ht(f,g)k[C] = 2$. Without loss of the generality, we assume that
$f_{(0,0)}\neq 0$. Then we construct a ring homomorphism from our affine toric ring $A$ to a ring
$B$ with the property  $fA = (f) B  \cap A$ and
$f,g$ is a regular sequence in $B$. Then our proofs become complete, by showing that
the same thing holds in $A$.

To find a contradiction of certain contrary, one of our tricks is to find an element $h$
such that $\Var(f,g)=\Var(h)$.
\end{discussion}

  \begin{lemma}
  Let $C \subseteq \mathbb{Z}^2$ be a semigroup which is isomorph to $ H := \{(a,b) \in \mathbb{N}_0^2| 0 \leq b/a < \infty\} \cup \{(0,0)\}$ or a full semigroup $M$ of $H$ such that for each $(a,b) \in H$, there exists $t \in \mathbb{N}$ such that $t(a,b) \in M$. Then $k[C]$ is Cohen-Macaulay in the sense of Hamilton-Marley.
  \end{lemma}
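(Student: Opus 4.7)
The plan is to take a strong parameter sequence $f, g$ in $k[C]$ and show it is regular by comparing with the polynomial ring $k[x,y]$, following the strategy outlined in the Discussion above. Combining Lemma \ref{2.1} (which forces $\Ht(f,g)\geq 2$) with Lemma \ref{g} (which gives $\dim k[C] = 2$, since the group generated by $C$ has rank $2$ in both the $C\cong H$ and $C\cong M$ cases), I obtain $\Ht(f, g)k[C] = 2$. Thus every minimal prime of $(f,g)$ has height $2$; in particular the unique homogeneous height-$2$ prime $\mathfrak{m}_0 := \bigoplus_{0\neq c\in C}kX^c$ is among them, and $(f,g)$ is $\mathfrak{m}_0$-primary in the homogeneous sense.

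Next I reduce to the case $f_{(0,0)} \neq 0$, the main subtlety of the argument (see below). Granting this, I use the inclusion $C \hookrightarrow \mathbb{N}_0^2$ (via $C \cong H$, or via $C \cong M \subseteq H$ in the other case) to embed $k[C] \hookrightarrow k[x,y]$. Lemma \ref{2} then applies to $f$ viewed as an element of $k[H]$ and gives $fk[H] = (f)k[x,y]\cap k[H]$. When $C \cong M$, fullness of $M$ in $H$ makes $k[M] \hookrightarrow k[H]$ pure by Lemma \ref{dir}, which upgrades the identity to $fk[C] = (f)k[x,y] \cap k[C]$. In either case this yields an injection of $k[C]$-modules $k[C]/(f) \hookrightarrow k[x,y]/(f)$.

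To conclude, I check that $f, g$ still has height $2$ in $k[x,y]$: the nonzero constant term of $f$ makes $(f)k[x,y]$ a height-$1$ ideal, and $g$ cannot lie in any minimal prime of $(f)k[x,y]$ without contradicting the $\mathfrak{m}_0$-primarity of $(f,g)$ in $k[C]$ (which transfers through the injection above). Hence $f, g$ is a regular sequence in the Cohen--Macaulay ring $k[x,y]$, so $g$ is a non-zero-divisor on $k[x,y]/(f)$ and, via the injection, also on $k[C]/(f)$. Since $k[C]$ is a domain, $f$ itself is a non-zero-divisor, so $f, g$ is a regular sequence in $k[C]$.

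The main obstacle is the reduction to $f_{(0,0)} \neq 0$. The naive substitution $f \mapsto f + 1$ fails because it alters $\Var(f)$ and may disrupt the parameter-sequence property. The plan is to combine Lemma \ref{par}(ii) (radical-invariance of the parameter-sequence property) with the explicit combinatorics of $H$: since $\sqrt{(f,g)} = \mathfrak{m}_0$ is not finitely generated, one cannot simply replace $(f,g)$ by a monomial pair with the same radical, and the argument must instead exploit a specific monomial appearing in $f$ or $g$ to extract a unit-like contribution at the origin while preserving the radical.
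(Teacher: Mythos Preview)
Your proposal has a genuine gap: the reduction to $f_{(0,0)}\neq 0$ is never carried out. You correctly identify this as the crux, but the plan you sketch (use Lemma~\ref{par}(ii) to swap $(f,g)$ for something with the same radical, exploiting ``a specific monomial appearing in $f$ or $g$'') does not go through as stated, and you acknowledge as much. The paper's resolution is not a reduction at all: it shows directly that the case $f,g\in\fm$ \emph{cannot occur} for a parameter sequence. The key observation is that in $k[M]$ there is some $x^k\in k[M]$, and every nontrivial monomial $x^ly^d\in k[M]$ has a power lying in $(x^k)$ (because $(2lkt-kt,2dkt)\in M$ for suitable $t$). Hence $\rad(\fm)=\rad(x^k)$, so for any $f,g\in\fm$ one has $\rad(f,g)\subseteq\rad(x^k)$ and therefore $H^2_{(f,g)}(k[C])_\fm = H^2_{(x^k)}(k[C])_\fm = 0$, contradicting condition~(3) of Definition~\ref{defhm}. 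This is the idea you are missing.

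There is a second, smaller gap. Your claim that $(f,g)$ is ``$\fm_0$-primary in the homogeneous sense'' is not justified: $\Ht(f,g)=2$ does not by itself force $\sqrt{(f,g)}=\fm_0$ (there are many height-two primes), so your later transfer of the height-$2$ condition to $k[x,y]$ via ``contradicting the $\fm_0$-primarity'' is circular. The paper argues differently: if $\Ht(f,g)k[x,y]=1$, then by Lemma~\ref{2} a minimal prime over $(f,g)$ in $k[x,y]$ is $f_1k[x,y]$ with $f_1=c+xh\in k[H]$, $c\neq 0$; one then checks that $(f_1,hxy)k[x,y]$ is proper, whence $f_1k[H]\subsetneq (f_1,hxy)k[x,y]\cap k[H]$ gives a height-one prime of $k[H]$ (and then of $k[C]$ by going-down) containing $(f,g)$, contradicting $\Ht(f,g)k[C]=2$. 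Once $\Ht(f,g)k[x,y]=2$ is established, your endgame via Lemma~\ref{2} and purity matches the paper's.
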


  \begin{proof}
  If $C$ is isomorph to $H$ then by Remark \ref{adt}, $k[C]$ is Cohen-Macaulay. Thus $C$ is isomorph to $M$.
  Note that $\fm := \{ h \in k[C]|h(0,0)=0\}$ is a maximal ideal of $k[C]$.
    Take  $f,g \in \fm$.  There exists $k \in \mathbb{N}$ such that $x^k \in k[C]$.  If $x^l y^d \in k[C]$, then there exists $t\in \mathbb{N}$ such that $(2lkt-kt,2dkt) \in C$. So $$(x^ly^d)^{2kt} = x^{kt}(x^{2lkt-kt}y^{2kdt}) \in \fp$$ for all $\fp \in \Var(x^k)$.
   This implies that $\Var(f,g)=\Var(x^k)$. In view of \cite[Proposition 2.1(e)]{HM} $$H_{f,g}^2(k[C])_{\fm} = H_{x^k}^2(k[C])_{\fm} =0.$$ Then $f,g$ isn't a parameter sequence.

  Now we assume that $f,g$ is a parameter sequence such that $f(0,0) \neq 0$. Since $C$ is full in $H$, $k[C]$ is direct summand of $k[H]$. Furthermore, for each $x^ly^d \in k[H]$, there is $t \in \mathbb{N}$ such that $(x^ly^d)^t \in k[C]$. Therefore $k[H]$ is integral over $k[C]$. Note that  $k[C]$ is normal. This implies that the inclusion map $k[C] \to k[H]$ has the going down and going up property. Since $f,g$ is a parameter sequence on $k[C]$, combining Lemmas \ref{2.1} and \ref{g} we have $\Ht(f,g)k[C] = 2$.
 We claim that $\Ht(f,g)k[x,y] =2$. Else, by Lemma \ref{2}, there is $f_1 \in k[H]$ such that $f_1k[x,y] \in \Var_{k[x,y]}(f,g)$. Suppose $f_1 = c + xh$ where $0 \neq c \in k , h \in k[x,y]$.  Thus $(f_1,hxy)k[x,y]$ is proper in $k[x,y]$. Suppose on the contrary that
  there are $f^{\prime},g^{\prime} \in k[x,y]$ such that $ f^{\prime}(c+xh)+g^{\prime}(hxy) =1$. Then $f^{\prime}= 1/c$ and $1/c x + g^{\prime}x y =0 $, that's impossible. Hence $$f_1 k[H] \subseteq (f_1,hxy)k[x,y] \cap k[H]  \subsetneqq k[H].$$ Besides $hxy \notin f_1 k[H]$, $\Ht(f_1k[H])=1$. Thus $\Ht(f_1k[H] \cap k[C]) =1 $ and $f,g \in \Ht(f_1k[H] \cap k[C])$. This  is a contradiction. So $\Ht(f,g)k[x,y] =2$. In particular, $f,g$ is a regular sequence in $k[x,y]$. By Lemma \ref{2}, $fk[H] = fk[x,y] \cap A$. Therefore $f,g$ is a regular sequence in $k[H]$. By purity of the inclusion map $k[C] \to k[H]$, $f,g$ is a regular sequence in $k[C]$. This finishes the proof.

  \end{proof}

  \begin{lemma}\label{H'}
  Let $ H^{\prime} := \{(a,b) \in \mathbb{N}^2| 0 < b/a < \infty\} \cup \{(0,0)\}$. Then $k[H^{\prime}]$ is Cohen-Macaulay in the sense of Hamilton-Marley.
  \end{lemma}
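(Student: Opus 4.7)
The plan is to mimic the template of the preceding lemma. Fix a strong parameter sequence $f,g$ in $R:=k[H']$. By Lemmas \ref{2.1} and \ref{g} we have $\Ht_R(f,g)=2$. Set $\fm=\{h\in R : h(0,0)=0\}$; this is a maximal ideal of $R$ with residue field $k$.

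My first step is to rule out the case $f,g\in\fm$. Every monomial $x^ay^b$ with $(a,b)\in H'\setminus\{(0,0)\}$ has $a,b\ge 1$ and is divisible by $xy$ in $S:=k[x,y]$, so $\fm\subseteq(xy)S$ and we can write $f=xyf_0,\ g=xyg_0$ with $f_0,g_0\in S$. A direct monomial computation gives the non-Noetherian identity $R_{xy}=k[x,y,x^{-1},y^{-1}]=S_{xy}$; the key point is that for $n\ge 1$ the element $x^{n+1}y^n f_0^n$ lies in $R$ and equals $x\cdot f^n$, so $x\in R_f$ and similarly $y\in R_f$, giving $R_f=S_f$, $R_g=S_g$ and $R_{fg}=S_{fg}$. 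Consequently the morphism of \v{C}ech complexes $C^{\bullet}(R;f,g)\hookrightarrow C^{\bullet}(S;f,g)$ is an isomorphism in degrees $1$ and $2$, its cokernel complex is concentrated in degree zero (equal to $S/R$), and the long exact sequence yields $H^{2}_{(f,g)}(R)\cong H^{2}_{(f,g)}(S)$. But $(f,g)S\subseteq(xy)S$ forces $\Ht_S(f,g)\le 1$; in the $2$-dimensional UFD $S$ this makes $\sqrt{(f,g)S}$ principal, hence $H^{2}_{(f,g)}(S)=0$, and therefore $H^{2}_{(f,g)}(R)=0$, contradicting condition $(3)$ of Definition \ref{defhm}.

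Now assume $f(0,0)\neq 0$. I next establish an analogue of Lemma \ref{2}: any irreducible $p\in S$ dividing $f$ already lies in $R$, and $pR=pS\cap R$. Writing $f=pq$ in $S$ and comparing the pure-$x$ parts $P(x)=\sum_i p_{i0}x^i$ and $Q(x)=\sum_i q_{i0}x^i$ in $k[x]$, one has $P(x)Q(x)=f_{(0,0)}\in k^{\times}$, forcing $P,Q\in k$; the pure-$y$ parts behave symmetrically, so $p\in R$. The identical bookkeeping applied to a factorization $h=ps$ with $h\in R,\ s\in S$ forces $s\in R$, i.e.\ $pR=pS\cap R$, making $pR$ a prime of $R$; a Krull-intersection argument using $\bigcap_n p^nR\subseteq\bigcap_n p^nS=0$ then shows $\Ht_R(pR)=1$. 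If $\Ht_S(f,g)\le 1$, then some irreducible $p\in S$ divides both $f$ and $g$; by the above $p\in R$ and $(f,g)R\subseteq pR$, forcing $\Ht_R(f,g)\le 1$ and contradicting $\Ht_R(f,g)=2$. Hence $\Ht_S(f,g)=2$, and because $S$ is Cohen--Macaulay, $f,g$ is a regular sequence in $S$.

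To finish, the same pure-parts argument (again using $f(0,0)\neq 0$) yields $fR=fS\cap R$, and regularity in $R$ then follows formally: if $gh\in fR$ with $h\in R$, then $gh\in fS$, so the regularity of $f,g$ in $S$ gives $h\in fS\cap R=fR$. I expect the main technical obstacle to be the \v{C}ech comparison of the first step, which hinges on the non-Noetherian localization identity $R_{xy}=S_{xy}$ and on the subtler fact that $f_0$ becomes invertible in $R_f$ even though $f_0\notin R$; once those identifications are in place the cokernel complex collapses and the isomorphism $H^{2}_{(f,g)}(R)\cong H^{2}_{(f,g)}(S)$ is automatic.
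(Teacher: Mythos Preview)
Your treatment of the case $f(0,0)\neq 0$ is correct and is in fact a cleaner variant of the paper's argument: you replace the detour through $k[H]$ and $k[\hat H]$ by a direct ``pure-$x$/pure-$y$ parts'' computation, and you obtain the contradiction via $\Ht_R(pR)=1$ (using the Krull intersection trick $\bigcap p^nR\subseteq\bigcap p^nS=0$) rather than via a local vanishing of $H^2$. Both routes work; yours is more self-contained.

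The gap is in the first step. Your \v Cech comparison $R_f=S_f$, $R_g=S_g$, $R_{fg}=S_{fg}$ and the resulting isomorphism $H^2_{(f,g)}(R)\cong H^2_{(f,g)}(S)$ are fine. What fails is the next sentence: from $(f,g)S\subseteq(xy)S$ you get $\Ht_S(f,g)\le 1$, but this does \emph{not} force $\sqrt{(f,g)S}$ to be (the radical of) a principal ideal, and $H^2_{(f,g)}(S)$ need not vanish. Take $f=xy(x-1)=x^2y-xy$ and $g=xy(y-1)=xy^2-xy$; both lie in $\fm\subset R$. In $S$ the ideal $(f,g)$ has the minimal primes $(x)$, $(y)$ and the height-two prime $(x-1,y-1)$, and a Mayer--Vietoris computation (using $(xy)+(x-1,y-1)=S$) gives $H^2_{(f,g)}(S)\cong H^2_{(x-1,y-1)}(S)\neq 0$. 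So the global conclusion $H^2_{(f,g)}(R)=0$ that you aim for is simply false in general.

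The repair is to argue locally at $\fm$, which is exactly what the paper does. Any height-two minimal prime $P$ of $(f,g)S$ cannot contain $xy$ (otherwise $(x)$ or $(y)$ would be a smaller prime over $(f,g)$), hence $P\cap R\neq\fm$. Thus, under your isomorphism, $H^2_{(f,g)}(R)\cong H^2_{(f,g)}(S)$ is an $R$-module supported away from $\fm$, so $H^2_{(f,g)}(R)_\fm=0$. Since $\fm\in\Var((f,g)R)$, this already violates condition~(3) of Definition~\ref{defhm}, and the case $f,g\in\fm$ is excluded. With this local correction in place, the rest of your proof goes through.
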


  \begin{proof}
 Note that $\fm := \{ h \in k[H^{\prime}]|h(0,0)=0\} $ is a maximal ideal of $k[H^{\prime}]$ and take $f,g \in \fm$. An easy computation implies that $\Var(xy)= \fm$. Therefore $$H_{f,g}^2(k[H^{\prime}])_{\fm} = H_{xy}^2(k[H^{\prime}])_{\fm} =0.$$ So $f,g$ can't be a parameter sequence.

  Now we assume that $f,g$ is a parameter sequence in $k[H^{\prime}]$ such that $f(0,0) \neq 0$. Since $f,g$ is a parameter sequence on $k[H^{\prime}]$, we have $\Ht(f,g)k[H^{\prime}] = 2$. We bring the following:

\textbf{Claim.}  $\Ht(f,g)k[x,y] =2$.

Indeed,
else, by Lemma \ref{2}, there is $f_1 \in k[H]$ such that $f_1k[x,y] \in \Var_{k[x,y]}(f,g)$.  Let $k[\hat{H}] := k + yk[x,y]$.
 The assignment $m+xn\mapsto m+yn$ gives us an isomorphism $\phi:k[H]\to k[\hat{H}]$. Apply this to the reasoning of Lemma \ref{2}, we have\\
 $$f_1 \in k[\hat{H}], \ \ and \ \ f_1k[x,y] \cap k[\hat{H}] = f_1k[\hat{H}].  \ \ (\dag)$$
 So $f_1 \in k[H^{\prime}]$. Again, by Lemma \ref{2}, $f_1k[x,y] \cap k[H] = f_1k[H]$. Keep in mind that $k[H^{\prime}]=k[H]\cap k[\hat{H}]$. Therefore $\fq := f_1k[x,y] \cap k[H^{\prime}] = f_1k[H^{\prime}]$. So $$ H_{f,g}^2(k[H^{\prime}])_{\fq} = H_{f_1}^2(k[H^{\prime}])_{\fq} =0.$$ This contradiction says that  $\Ht(f,g)k[x,y] =2$.

Thus $f,g$ is a regular sequence in $k[x,y]$. Note that $fk[H^{\prime}]= fk[x,y] \cap k[H^{\prime}]$. So $f,g$ is a regular sequence in $k[H^{\prime}]$.
\end{proof}

  \begin{lemma}
  Let $C \subseteq \mathbb{Z}^2$ be a semigroup isomorph to a full semigroup $ N$ of $H^{\prime}$   such that for each $(a,b) \in H^{\prime}$, there exists $k \in \mathbb{N}$ such that $k(a,b) \in N$. Then $k[C]$ is Cohen-Macaulay in the sense of Hamilton-Marley.
  \end{lemma}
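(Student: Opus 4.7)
The strategy is to run the argument of Lemma \ref{H'} verbatim, feeding in the extra structural input coming from $N$ being full and cofinal in $H'$. To set the stage, I note that fullness of $N$ in $H'$ gives (via Lemma \ref{dir}) that $k[C]\hookrightarrow k[H']$ is pure, while the cofinality condition immediately shows that every monomial of $k[H']$ has a power in $k[C]$, so $k[H']$ is integral over $k[C]$. A short direct check also shows that $N$ is itself a normal semigroup, so $k[C]$ is a normal domain and going-down applies to $k[C]\subseteq k[H']$.

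Let $\mathfrak m:=\{h\in k[C]\mid h(0,0)=0\}$, which is a maximal ideal of $k[C]$. For a candidate strong parameter sequence $f,g$ in $k[C]$, the first task is to rule out $f,g\in \mathfrak m$. Cofinality applied to $(1,1)\in H'$ produces $k_1\in\mathbb{N}$ with $(k_1,k_1)\in N$, so $u:=x^{k_1}y^{k_1}\in k[C]$. For any nonzero monomial $x^{a}y^{b}\in\mathfrak m$ (so $a,b\geq 1$) the vector $(Ma-k_1,Mb-k_1)$ sits in $H'$ for all sufficiently large $M$, and cofinality then supplies $t$ with $t(Ma-k_1,Mb-k_1)\in N$; this yields $(x^{a}y^{b})^{tM}=u^{t}\cdot x^{t(Ma-k_1)}y^{t(Mb-k_1)}\in u\,k[C]$. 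Hence $\Var_{k[C]}(u)=\{\mathfrak m\}$, and exactly as in Lemma \ref{H'} (via \cite[Proposition 2.1(e)]{HM}) one obtains $H^{2}_{f,g}(k[C])_{\mathfrak m}=H^{2}_{u}(k[C])_{\mathfrak m}=0$, so $f,g$ fails to be a parameter sequence.

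The remaining case is $f(0,0)\ne 0$. Lemmas \ref{2.1} and \ref{g} give $\Ht(f,g)k[C]=2$, and the next goal is to upgrade this to $\Ht(f,g)k[x,y]=2$. If the latter failed, Lemma \ref{2} applied to $H$, together with the mirror statement for $\hat H$ via the isomorphism $\phi$ from Lemma \ref{H'} and the identity $k[H']=k[H]\cap k[\hat H]$, would produce $f_{1}\in k[H']$ with $f_{1}(0,0)\ne 0$ such that $\mathfrak q:=f_{1}k[H']=f_{1}k[x,y]\cap k[H']$ is a height-one prime of $k[H']$ containing $f$ and $g$. Contracting $\mathfrak q$ to $k[C]$ and using going-down for the integral extension of normal domains $k[C]\subseteq k[H']$, the prime $\mathfrak q\cap k[C]$ has height $1$ and still contains $(f,g)$, contradicting $\Ht(f,g)k[C]=2$.

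Thus $f,g$ is regular in $k[x,y]$. The identity $fk[H']=fk[x,y]\cap k[H']$ (which falls out of the proof of Lemma \ref{H'}) transfers this first to $k[H']$, and then purity of $k[C]\hookrightarrow k[H']$ together with \cite[Lemma 6.4.4(c)]{BH} transports regularity down to $k[C]$. The principal obstacle, as I see it, is the third step: the height calculation for the contracted prime $\mathfrak q\cap k[C]$ has to be done in the non-noetherian extension $k[C]\subseteq k[H']$. Classical going-down for integral extensions of normal domains does not require the noetherian hypothesis, but one must set it up with some care in this infinite-type context, and this is really where the combination of fullness (purity) and cofinality (integrality) of $N$ in $H'$ pays off.
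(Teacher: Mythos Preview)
Your proof follows essentially the same route as the paper's: rule out $f,g\in\mathfrak m$ by showing a single monomial $u$ has $\rad(u)=\mathfrak m$, then in the case $f(0,0)\neq 0$ reduce to $\Ht(f,g)k[x,y]=2$ via the $f_1\in k[H']$ produced by the argument of Lemma \ref{H'}, and finally descend regularity through $k[x,y]\to k[H']\to k[C]$ by purity.

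The one substantive difference is how the contradiction from $\Ht(f,g)k[x,y]=1$ is obtained. You assert that $\mathfrak q=f_1k[H']$ is a height-one prime of $k[H']$ and then invoke going-down for the integral extension $k[C]\subseteq k[H']$ of normal domains. The going-down step is fine, but the height-one claim in the non-noetherian ring $k[H']$ is not automatic: a nonzero principal prime need not have height $\le 1$ without Krull's theorem, and $k[x,y]$ is not integral over $k[H']$, so you cannot transfer the height directly from $k[x,y]$. The paper handles exactly this point by an explicit construction: writing $f_1=c+xyh$ it exhibits an element $hx^{i+2}y^{i+2}\in k[H']\setminus f_1k[H']$ with $(f_1,hx^{i+2}y^{i+2})k[x,y]$ proper, so $f_1k[H']$ is not maximal; combined with $\dim k[H']=2$ this forces $\Ht(f_1k[H'])=1$. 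Your argument becomes complete once you insert this (or any equivalent) non-maximality check; conversely, your clean appeal to going-down is what the paper leaves implicit when it writes ``So $\Ht(f_1k[H']\cap k[C])=1$''.
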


  \begin{proof}
Recall  that $\fm := \{ h \in k[H^{\prime}]|h_{(0,0)}=0\} $ is a maximal ideal of $k[H^{\prime}]$ and take $f,g \in \fm$.  Let $x^iy^j \in k[C]$ where $i>0$ and $j>0$. Let $(l,k) \in C$. We multiply $(l,k)$ by $t$ to obtain $lt>i,kt>j$. Then $$(lt,kt) = (i,j) + (lt-i,kt-j).$$We find $m\in \mathbb{N}$ such that $m(lt-i,kt-j) \in C$. Therefore $$(x^ly^k)^{mt} = (x^{mi}y^{mj})(x^{mlt-mi}y^{mkt-mj}) \in \fp$$ for all $\fp \in \Var(x^iy^j)$. Thus $\Var(x^iy^j)= \fm$.  Therefore $H_{f,g}^2(k[C])_{\fm} = H_{x^iy^j}^2(k[C])_{\fm} =0$. So $f,g$ can't be a parameter sequence.

   Now we assume that $f,g$ is a parameter sequence in $k[C]$ such that $f_{(0,0)} \neq 0$.  One can find easily that  $k[C]$ is direct summand of $k[H^{\prime}]$ and $k[H^{\prime}]$ is integral over $k[C]$. If $\Ht(f,g)k[x,y] =1$ were be the case, then by the reasoning of Lemma \ref{H'}, there should find $f_1 \in k[H^{\prime}]$ with the property $f_1k[x,y] \in \Var_{k[x,y]}(f,g)$. Write $f_1 = c + xyh$ where $0 \neq c \in k , h \in k[x,y]$. Take $i \in \mathbb{N}_0$ be the maximum integer that $h$ divides $(xy)^i$.  Note that $(f_1,hx^{i+2}y^{i+2})k[x,y]$ is proper in $k[x,y]$. Suppose on the contrary that there are $f^{\prime},g^{\prime} \in k[x,y]$ such that $ f^{\prime}(c+xyh)+g^{\prime}(hx^{i+2}y^{i+2}) =1$. This implies that $f^{\prime}= 1/c$ and $xy/c  + g^{\prime}x^{i+2}y^{i+2} =0 $. This is impossible. Hence $$f_1 k[H^{\prime}] \subseteq (f_1,x^{i+2}y^{i+2})k[x,y] \cap k[H^{\prime}]  \subsetneqq k[H^{\prime}].$$ Besides $hx^{i+2}y^{i+2} \notin f_1 k[H]$, $\Ht(f_1k[H^{\prime}])=1$. So $$\Ht(f_1k[H^{\prime}] \cap k[C]) =1 \  \ and \ \  f,g \in f_1k[H^{\prime}] \cap k[C].$$ This  contradiction says that $\Ht(f,g)k[x,y] =2$. Clearly, $f,g$ is a regular sequence in $k[x,y]$.  Look at $(\dag)$ in Lemma \ref{H'}. That is   $fk[x,y] \cap k[\hat{H}] = fk[\hat{H}]$. This implies that $fk[H^{\prime}] = fk[x,y] \cap k[H^{\prime}]$. Therefore $f,g$ is a regular sequence in $k[H^{\prime}]$. By the  purity of the inclusion map $k[C] \to k[H^{\prime}]$, $f,g$ is  a regular sequence in $k[C]$.
  \end{proof}

  \begin{lemma}
  Let $ H_1 := \{(a,b) \in \mathbb{Z}^2|  b \in \mathbb{N}_0,\; and  \  \ if \;a \; is \; negative\; b\neq 0\} \cup \{(0,0)\}$. Then $k[H_1]$ is Cohen-Macaulay in the sense of Hamilton-Marley.
 \end{lemma}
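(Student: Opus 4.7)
The plan is to adapt the pattern of Lemma \ref{H'}, using the localization $B := k[x,x^{-1},y] = k[H_1][x^{-1}]$ as the auxiliary Noetherian overring. Note that $B$ is a regular Noetherian UFD of Krull dimension two, and the inclusion $k[H_1] \hookrightarrow B$ is the localization at the element $x$.

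The opening observation, replacing ``$\Var(xy) = \fm$'' from the proof of Lemma \ref{H'}, is that the maximal ideal $\fm := \{h \in k[H_1] : h_{(0,0)} = 0\}$ is principal: $\fm = x\,k[H_1]$. Indeed, for any monomial $x^ay^b \in \fm \setminus \{0\}$ the lattice point $(a-1,b)$ lies in $H_1$ (if $b=0$ then $a \geq 1$, and if $b \geq 1$ any integer $a-1$ is allowed), so $x^ay^b = x\cdot(x^{a-1}y^b) \in x\,k[H_1]$. If $f,g \in \fm$ were a parameter sequence, Lemmas \ref{2.1} and \ref{g} would force $\Ht(f,g)k[H_1] = 2$; in the two-dimensional localization $k[H_1]_\fm$ this yields $\sqrt{(f,g)k[H_1]_\fm} = \fm k[H_1]_\fm = \sqrt{(x)k[H_1]_\fm}$, hence $H^2_{f,g}(k[H_1])_\fm = H^2_x(k[H_1])_\fm = 0$, contradicting Definition \ref{defhm}(3). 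Thus we may assume $f_{(0,0)} \neq 0$ and $\Ht(f,g)k[H_1] = 2$.

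Since $B = k[H_1][x^{-1}]$, primes of $B$ correspond to primes of $k[H_1]$ not containing $x$ with heights preserved, so $\Ht_B(f,g) \geq \Ht_{k[H_1]}(f,g) = 2$; together with $\dim B = 2$ this gives $\Ht_B(f,g) = 2$. Because $B$ is Cohen-Macaulay, $f,g$ is a regular sequence in $B$.

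Descent of regularity to $k[H_1]$ rests on the following analog of Lemma \ref{2}: for every $h \in k[H_1]$ with $h_{(0,0)} \neq 0$, one has $hk[H_1] = hB \cap k[H_1]$. I would prove this by decomposing $B = k[H_1] \oplus V$ as a $k$-vector space, where $V := \operatorname{span}_k\{x^a : a < 0\}$, and for $b \in B$ with $hb \in k[H_1]$ writing $b = b_0 + b_-$ with $b_0 \in k[H_1]$, $b_- \in V$. Writing $h = c + h_0' + y\tilde h$ with $c = h_{(0,0)} \neq 0$, $h_0' \in xk[x]$ the $y$-degree zero part of $h-c$, and $\tilde h \in k[H_1]$, a direct expansion shows the $V$-component of $hb$ equals $cb_-$ plus the negative $x$-part of $h_0' b_-$, all other contributions lying in $k[H_1]$ or having positive $y$-degree. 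Writing $b_- = \sum_{k<0} c_k x^k$ and $h_0' = \sum_{p\geq 1} d_p x^p$, this becomes the recurrence $c\,c_k + \sum_{p\geq 1} d_p c_{k-p} = 0$ for all $k < 0$; evaluating at $k_{\min} := \min\{k : c_k \neq 0\}$, the sum vanishes and forces $c_{k_{\min}} = 0$, so $b_- = 0$. Granted this, if $gh \in fk[H_1]$ then $gh \in fB$, hence $h \in fB$ by regularity of $f,g$ in $B$, and finally $h \in fB \cap k[H_1] = fk[H_1]$. The main obstacle is precisely the support/recurrence argument above; what makes it go through is the combinatorial fact that the $y$-degree zero part of any element of $\fm \cap k[H_1]$ lies in $xk[x]$, forcing $p \geq 1$ in the recurrence and enabling the descent on minimum support.
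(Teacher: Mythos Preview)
Your proof is correct and follows essentially the same route as the paper: establish $\fm = xk[H_1]$ to rule out the case $f,g\in\fm$, pass to the Laurent ring $A=k[x,y,x^{-1}]$ to obtain regularity of $f,g$ there, and descend via $fA\cap k[H_1]=fk[H_1]$ by isolating the lowest negative $x$-power in $y$-degree zero (the paper does this by writing $f=c+xb$ with $b\in k[H_1]$ and reading off the coefficient of $x^{-n}$). One harmless slip: your $\tilde h$ need not lie in $k[H_1]$ (the $y^1$-part of $h$ may carry negative powers of $x$), but you only use that $y\tilde h$ has positive $y$-degree, which does hold.
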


  \begin{proof}
  Let $\fm := \{f \in k[H_1]| f_{(0,0)}=0\}$. Note that $\fm$ is a maximal ideal of $k[H_1]$.
   Let $\sum_{(i,j)\in H_1} a_{i,j}x^iy^j\in \fm$. Then $\sum_{(i,j)\in H_1} a_{i,j}x^iy^j=x(\sum_{(i,j)\in H_1} a_{i,j}x^{i-1}y^j)$.  This says that
      $\fm = xk[H]$. Let $f,g \in \fm$. Then $$H_{f,g}^2(k[H_1])_{\fm} = H_{x}^2(k[H_1])_{\fm} =0.$$ Therefore, $f,g$ can't be a parameter sequence in $k[H_1]$.

      Set $A:= k[x,y,x^{-1}]$ and consider the localization map $k[H_1] \to k[x,y,x^{-1}]$. Let $f,g$ be a parameter sequence in $k[H_1]$ such that $f_{(0,0)} \neq0$. Hence $\Ht_{k[H_1]}(f,g) =2$. Consider the case $(f,g)k[H_1] \cap \{x^n|n\in \mathbb{N}_0\} \neq \emptyset$. Then, for all $\fp \in \Var_{k[H_1]}((f,g)k[H_1])$, we have $x \in \fp$. This implies that $ \Var_{k[H_1]}((f,g)k[H_1]) = \fm$. So $$ H_{f,g}^2(k[H_1])_{\fm} = H_{x}^2(k[H_1])_{\fm} =0.$$ This contradiction implies that $(f,g)k[H_1] \cap \{x^n|n\in \mathbb{N}_0\} =  \emptyset$.  From this, we conclude that $\Ht_A((f,g)A)=2$. Therefore $f,g$ is a regular sequence in $A$.

 We show that $f,g$ is a regular sequence in $k[H_1]$. Suppose $hg = h_1f$ where $h,h_1 \in k[H_1]$. So $h = fv$ for some $v \in A$. We need to show $v \in k[H_1]$. Note that $ \sum_{0\neq(i,j)\in H_1} f_{i,j}x^iy^j=x(\sum_{0\neq(i,j)\in H_1} f_{i,j}x^{i-1}y^j)$. That is  $f = c + xb $ where $c \in k$ and $b \in k[H_1]$. If  $v \in k[H_1]$ were not be the case, then we should have $$v = c_0+ c_1x^{-1}+ \ldots+c_nx^{-n}+ a$$ where $a \in k[H_1]$ and $n>0$. Thus $$ h= ( cc_0+ cc_1x^{-1}+ \ldots+cc_nx^{-n})+ b(c_0x+ c_1+ \ldots+c_nx^{-n+1}) \in k[H_1].$$ But the coefficient of    $x^{-n}$ in $h$ is $cc_n$ which is nonzero and $(-n,0)\notin H_1$. This  is a contradiction. So $v \in k[H_1]$ and this finishes proof.  \end{proof}

  \begin{lemma}
  Let $C \subseteq \mathbb{Z}^2$ be a semigroup  isomorph to a full semigroup $M$ of $H_1$ such that for each $(a,b) \in H_1$, there exists $k \in \mathbb{N}$ such that $k(a,b) \in M$. Then $k[C]$ is Cohen-Macaulay in the sense of Hamilton-Marley.
 \end{lemma}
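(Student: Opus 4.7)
The plan is to mirror the template set by the three preceding full-subsemigroup lemmas (for $H$, $H^{\prime}$, and the $H_1$-case itself), with the new input being the lemma just proved that $k[H_1]$ is Cohen-Macaulay in the Hamilton-Marley sense. Two structural observations I would establish at the outset: (a) fullness of $C \subseteq H_1$ yields via Lemma \ref{dir} that $k[C] \hookrightarrow k[H_1]$ is pure, so $k[C]$ is a direct summand; and (b) the divisibility condition (every $(a,b) \in H_1$ has a positive integer multiple inside $C$) makes $k[H_1]$ integral over $k[C]$, since each monomial $X^h \in k[H_1]$ satisfies $T^k - X^{kh} = 0$ for the appropriate $k$. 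Combined with the normality of $k[C]$ inherited from $H_1$, this integral extension has both going-up and going-down.

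With these in hand, I would first rule out parameter sequences lying entirely in the maximal ideal $\fm := \{f \in k[C] : f_{(0,0)} = 0\}$. Applying the divisibility condition to $(1, 0) \in H_1$ produces $x^{k_0} \in k[C]$ for some $k_0 \in \mathbb{N}$. For each monomial generator $X^h \in \fm$, an elementary computation of the same flavor as in the preceding three lemmas yields a power of $X^h$ inside $x^{k_0} k[C]$, so $\Var_{k[C]}(\fm) = \Var_{k[C]}(x^{k_0})$. Consequently, for any $f,g \in \fm$,
$$H^2_{f,g}(k[C])_{\fm} \;=\; H^2_{x^{k_0}}(k[C])_{\fm} \;=\; 0,$$
contradicting Definition \ref{defhm}(3). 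So any monomial strong parameter sequence $f, g$ must have, after reordering, $f_{(0,0)} \neq 0$. In that case Lemmas \ref{2.1} and \ref{g} give $\Ht_{k[C]}(f, g) = 2$, and going-up/going-down promote this to $\Ht_{k[H_1]}((f, g)k[H_1]) = 2$.

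To finish I would upgrade $f, g$ to a strong parameter sequence on $k[H_1]$: the monomial character is tautological, weak proregularity ascends through the pure inclusion by exactly the argument of Lemma \ref{pdhm}, the unit-ideal case is excluded by the height computation, and the Hamilton-Marley local nonvanishing $H^2_{f,g}(k[H_1])_{\fq} \neq 0$ for $\fq \in \V((f,g)k[H_1])$ follows by localizing the parameter condition for $k[C]$ at the contraction $\fq \cap k[C]$, which by going-down lies in $\V((f,g)k[C])$. The preceding lemma then declares $f, g$ to be a regular sequence in $k[H_1]$, and purity combined with \cite[Lemma 6.4.4(c)]{BH} carries this back to $k[C]$. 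The most delicate point is this final ascent: one must verify that each $\fq \in \V((f,g)k[H_1])$ contracts to a prime in $\V((f,g)k[C])$ and that the relevant local cohomology modules remain linked under localization along the pure inclusion -- this is precisely where the combined hypotheses of fullness and power-divisibility of $C$ in $H_1$ become indispensable.
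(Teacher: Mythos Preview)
Your setup through the height computation $\Ht_{k[H_1]}((f,g)k[H_1]) = 2$ matches the paper exactly, and the reduction to $f_{(0,0)} \neq 0$ via $\Var(x^{k_0}) = \fm$ is correct. The divergence comes in the endgame, and there is a genuine gap.

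You attempt to promote $f,g$ to a strong parameter sequence on $k[H_1]$ and then invoke the preceding lemma as a black box. The problem is your justification for weak proregularity. You write that it ``ascends through the pure inclusion by exactly the argument of Lemma \ref{pdhm}'', but that argument goes the \emph{other} way: in Lemma \ref{pdhm} one starts with weak proregularity on the \emph{larger} ring $A$, uses that the horizontal maps $H_i(\mathbb{K}_\bullet(\underline{x}^m;A_\delta)) \hookrightarrow H_i(\mathbb{K}_\bullet(\underline{x}^m;A))$ are injective (purity), and concludes the vertical map on $A_\delta$ vanishes. In your situation $k[C]$ is the smaller ring, and the direct-sum splitting $k[H_1] = k[C] \oplus M$ gives $H_i(\mathbb{K}_\bullet(\underline{x}^m;k[H_1])) = H_i(\mathbb{K}_\bullet(\underline{x}^m;k[C])) \oplus H_i(\mathbb{K}_\bullet(\underline{x}^m;M))$; knowing $\varphi^m_n$ kills the first summand says nothing about the second. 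Nor is $k[C] \to k[H_1]$ flat in general, so Lemma \ref{par}(i) is unavailable. A similar issue shadows your local nonvanishing step: from $H^2_{f,g}(k[C])_\fp \neq 0$ you get $H^2_{f,g}(k[H_1])_\fp \neq 0$ by the splitting, but passing to the further localization at $\fq$ can kill the module.

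The paper sidesteps both issues by never verifying the parameter-sequence condition on $k[H_1]$ at all. Instead it repeats a fragment of the preceding lemma's argument: from $\Ht_{k[H_1]}(f,g)=2$ and $f_{(0,0)}\neq 0$ it shows $(f,g)k[H_1] \cap \{x^n : n \in \mathbb{N}_0\} = \emptyset$, whence $\Ht_A((f,g)A) = 2$ in the \emph{Noetherian} ring $A = k[x,y,x^{-1}]$; there $f,g$ is regular, the contraction identity $fA \cap k[H_1] = f\,k[H_1]$ (established in the $H_1$ lemma) pulls regularity back to $k[H_1]$, and purity finishes the descent to $k[C]$.
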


  \begin{proof}  First recall that $k[C]$ is a direct summand of $k[H_1]$ and $k[H_1]$ is integral over $k[C]$.
    Look at $\fm_1 := \{f \in k[C]| f_{(0,0)}=0\}$ and take
   $f,g \in \fm_1$. Then $\fm_1$ is a maximal ideal of $k[C]$. There exists $k \in \mathbb{N}$ such that $x^k \in k[C]$. It turns out that $\Var(x^k) = \fm_1$. Therefore $$H_{f,g}^2(k[C])_{\fm_1} = H_{x^k}^2(k[C])_{\fm_1} =0.$$ Then $f,g$ isn't a parameter sequence.

 Now suppose $f,g$ is a  parameter sequence and $f_{(0,0)} \neq 0$. Thus $\Ht_{k[C]}(f,g) =2.$ We deduce from this to observe $\Ht_{k[H_1]}(f,g) =2.$ If $(f,g)k[H_1] \cap \{x^n|n\in \mathbb{N}_0\} \neq \emptyset$, then for all $\fp \in \Var_{k[H_1]}((f,g)k[H_1])$, we have $x \in \fp$. This implies that $$ \Var_{k[H_1]}((f,g)k[H_1]) = \fm.$$ But $f \notin \fm $. In view of this contradiction, $$(f,g)k[H_1] \cap \{x^n|n\in \mathbb{N}_0\} =  \emptyset.$$ Now conclude  that $\Ht_A((f,g)A)=2$. Therefore $f,g$ is a regular sequence in $A$. So $f,g$ is a regular sequence in $k[H_1]$. In view of the purity, $f,g$ is a regular sequence in $k[C]$.
  \end{proof}

  \begin{lemma}\label{h2}
  Let  $ H_2 := \{(a,b) \in \mathbb{Z}^2|  b \in \mathbb{N}\} \cup \{(0,0)\}$. Then $k[H_2]$ is Cohen-Macaulay in the sense of Hamilton-Marley.
 \end{lemma}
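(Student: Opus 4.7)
The plan is to mirror the two-case strategy used in the earlier lemmas of this section. Set $\fm := \{h \in k[H_2] : h_{(0,0)} = 0\}$, the kernel of the augmentation $k[H_2] \twoheadrightarrow k$; it is a maximal ideal. The first observation to establish is $\fm = \rad(y k[H_2])$: $y \in \fm$ gives one inclusion, while for the other, every monomial $x^a y^b \in \fm$ (so $b \geq 1$) satisfies $(x^a y^b)^2 = y \cdot x^{2a} y^{2b-1} \in y k[H_2]$, since $(2a, 2b - 1) \in H_2$. Hence $\Var(y k[H_2]) = \{\fm\}$ in $\Spec k[H_2]$.

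For the case $f, g \in \fm$, I follow the shorthand of the preceding lemmas and conclude $H^2_{(f,g)}(k[H_2])_\fm = H^2_y(k[H_2])_\fm = 0$ (the second equality because $y$ is a single element, whose \v{C}ech complex vanishes in degree $2$); by the definition of parameter sequence (Definition \ref{defhm}) such an $f, g$ cannot be a parameter sequence.

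In the remaining case $f_{(0, 0)} \neq 0$ with $f, g$ a parameter sequence, Lemma \ref{2.1} gives $\Ht_{k[H_2]}(f, g) = 2$. I work with the Laurent polynomial ring $B := k[x, x^{-1}, y, y^{-1}] = k[H_2][y^{-1}]$, a $2$-dimensional regular Noetherian ring. First I show no power of $y$ lies in $(f, g)$: otherwise $y \in \rad(f, g)$, so every prime over $(f, g)$ contains $\rad(y) = \fm$ and hence equals $\fm$ by maximality, forcing the contradiction $f \in \fm$. Therefore all primes over $(f, g)$ in $k[H_2]$ avoid $y$, so via the localization bijection they correspond to primes of $B$ containing $(f, g) B$ of the same height. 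Consequently $\Ht_B((f,g)B) = 2 = \dim B$, and by regularity of $B$ the pair $f, g$ is a regular sequence in $B$.

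The final step is descent, and for this I will prove $fB \cap k[H_2] = f k[H_2]$ by a coefficient argument in the spirit of the descent used for $H_1$. Write $f = c + \sum_{(a, b) \in H_2 \setminus \{0\}} f_{a, b} x^a y^b$ with $c = f_{(0,0)} \neq 0$, and take $v = \sum c_{i, j} x^i y^j \in B$ (a finite sum, $(i, j) \in \mathbb{Z}^2$) with $fv \in k[H_2]$. For each $(i, j) \notin H_2$ the coefficient $c \, c_{i, j} + \sum_{b \geq 1,\, a \in \mathbb{Z}} f_{a, b}\, c_{i-a,\, j-b}$ of $x^i y^j$ in $fv$ must vanish. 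When $j \leq -1$, every $c_{i-a, j-b}$ appearing on the right has $j - b < j$, so descending induction on $j$ (bounded below by the finite support of $v$) yields $c_{i, j} = 0$ for all $j \leq -1$; the remaining case $j = 0$, $i \neq 0$ then forces $c_{i, 0} = 0$. Thus $v \in k[H_2]$, and any relation $hg = h_1 f$ in $k[H_2]$ gives $h = fv$ in $B$, hence $v \in k[H_2]$, so $f, g$ is a regular sequence in $k[H_2]$. The most delicate point is the \v{C}ech vanishing in the first case, handled here by the same shorthand as in Lemma \ref{H'} and its relatives.
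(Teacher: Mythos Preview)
Your proof is correct and follows the same two–case split as the paper, but in the case $f_{(0,0)}\neq 0$ you take a genuinely different and somewhat cleaner route. The paper passes to $A=k[x,y,x^{-1}]$, argues by contradiction that $\Ht_A((f,g)A)=2$ via the UFD property (extracting a principal prime $f_1A$, massaging $f_1$ into $k[H_2]$, and then using the \v{C}ech vanishing $H^2_{f,g}(k[H_2])_{f_1k[H_2]}=H^2_{f_1}(k[H_2])_{f_1k[H_2]}=0$). You instead localize all the way to $B=k[x,x^{-1},y,y^{-1}]=k[H_2][y^{-1}]$ and transfer the height directly through the localization bijection, which avoids the UFD detour entirely; the coefficient descent $fB\cap k[H_2]=fk[H_2]$ then finishes exactly as in the paper's $fA\cap k[H_2]=fk[H_2]$. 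Your use of $y$ rather than $xy$ as a radical generator of $\fm$ in the first case is an inessential variant.

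One small wrinkle in your write-up: from ``no power of $y$ lies in $(f,g)$'' you conclude ``all primes over $(f,g)$ in $k[H_2]$ avoid $y$,'' but the first statement does not by itself imply the second. What you actually need (and what your own ingredients give immediately) is the direct argument: if $\fp\supseteq(f,g)$ and $y\in\fp$, then $\fp\supseteq\rad(yk[H_2])=\fm$, so $\fp=\fm$, contradicting $f\notin\fm$. With that adjustment the height transfer to $B$ is fully justified.
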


  \begin{proof}
   Let $\fm := \{f \in k[H_2]| f_{(0,0)}=0\}$ and take $f,g\in \fm$. Then $\fm$ is a maximal ideal of $k[H_2]$.
    Let $x^ay^b \in \fm$. Then $(x^{a}y^{b})^2= (xy)(x^{2a-1}y^{2b-1}) \in \fp$ for all $\fp \in \Var_{k[H_2]}(xyk[H_2])$.
   This implies that $\Var_{k[H_2]}(xyk[H_2]) = \fm$. Therefore, $$ H_{f,g}^2(k[H_2])_{\fm} = H_{xy}^2(k[H_2])_{\fm} =0.$$ So $f,g$ can't be a parameter sequence in $k[H_2]$.

   Let $f,g$ be a parameter sequence in $k[H_2]$ such that $f_{(0,0)} \neq0$. Then $\Ht_{k[H_2]}(f,g) =2$. Consider the ring $A:= k[x,y,x^{-1}]$. Suppose on the contrary that $\Ht_A((f,g)A) =1$. Since $A$ is a unique factorization domain, there is a prime ideal $\fp$ in $A$ such that $\fp$ is minimal over $(f,g)A$ and $\fp = f_1 A$ for some  $f_1 \in A$. We show that one can choose $f_1 \in k[H_2]$. Look at $f_2 \in k[x,x^{-1}]$ and $ f_3 \in k[H_2]$ with the properties $f_1 = f_2 +f_3$ and $(f_3)_{(0,0)} =0$. There is $h \in A$ such that $f_1h = f$. We choose $h_2 \in k[x,x^{-1}]$ and $ h_3 \in k[H_2]$ such that $h = h_2 +h_3$ and $(h_3)_{(0,0)} =0$. Keep in mind that $f_3h_1$, $f_3h_3$ and $f_2h_3$ are in $\fm$.
   Also, recall that $f_2, h_2\in k[x,x^{-1}]$. Since monomials in $k[H_2]$ are not involved on $x^{\pm n}$,
      then $f_2h_2 = f_{(0,0)} \in k[H_2]$. Remark that  $f_2$ must be an invertible element in $k[x,x^{-1}]$. If $f_2 \in k$, then $f_1 \in k[H_2]$. Else if $f_2= ax^n$ where $n \in \mathbb{Z}$ and $a\in k$, then $x^{-n}f_1 \in k[H_2]$. Thus $f_1 A = x^{-n}f_1 A$.
   Replacing  $f_1$ by $x^{-n}f_1$, we assume that $f_1 \in k[H_2]$. Clearly,  $$f_1 k[H_2] \subseteq f_1A \cap k[H_2].$$ We show that this is an equality.  Write $f_1 = c + f_2$  for $c \in k$ and $f_2 \in \fm$.
   Also, take $h \in A$ and write it as $h = h_1 + h_2 $ where $h_1 \in k[x,x^{-1}]$ and $h_2 \in \fm$.
   Assume that $f_1 h \in k[H_2]$. Hence $ch_1 \in k[H_2]$. Thus $ h_1 \in k$ and $h \in k[H_2]$. Therefore, $$f_1 k[H_2] =  f_1A \cap k[H_2].$$ So $$f_1 k[H_2] \in \Var_{k[H_2]}((f,g)k[H_2]).$$Then $$ H_{f,g}^2(k[H_2])_{f_1k[H_2]} = H_{f_1}^2(k[H_2])_{f_1k[H_2]} =0.$$ This contradiction says that $\Ht_A((f,g)A)= 2$. Thus $f,g$ is a regular sequence in $A$. Since $fA \cap k[H_2] = fk[H_2]$,  $f,g$ is a regular sequence in $k[H_2]$.
   \end{proof}

 \begin{lemma}
  Let $C \subseteq \mathbb{Z}^2$ be a semigroup isomorph to a  full semigroup $M$ of $H_2$ and for each $(a,b) \in H_2$  there exists $k \in \mathbb{N}$ such that $k(a,b) \in M$. Then $k[C]$ is Cohen-Macaulay in the sense of Hamilton-Marley.
 \end{lemma}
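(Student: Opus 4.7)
The plan is to mirror the structure of the preceding lemmas in this subsection, in particular the analogues for the full-subsemigroup cases of $H^{\prime}$ and $H_1$, combined with Lemma \ref{h2}. Two structural facts underlie everything. First, since $C$ is full in $H_2$, Lemma \ref{dir} gives that $k[C]$ is a direct summand (hence pure) in $k[H_2]$. Second, the multiplicity hypothesis guarantees that $k[H_2]$ is integral over $k[C]$, so the inclusion $k[C]\hookrightarrow k[H_2]$ satisfies going-up and going-down. Consequently heights of ideals transfer faithfully between the two rings, and by \cite[Lemma 6.4.4(c)]{BH} a regular sequence in $k[H_2]$ whose elements happen to lie in $k[C]$ is automatically a regular sequence in $k[C]$.

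Set $\fm_1:=\{f\in k[C]\mid f_{(0,0)}=0\}$, a maximal ideal of $k[C]$. The first step is to rule out that two elements $f,g\in\fm_1$ can form a parameter sequence. By the multiplicity condition applied to $(1,1)\in H_2$, there is $t\in\mathbb{N}$ with $(t,t)\in C$, so $x^ty^t\in k[C]$. For any monomial $X^h\in k[C]$ with $h=(a,b)\in C\setminus\{(0,0)\}$ we have $b\geq 1$; for $N\geq (t+1)/b$ the element $Nh-(t,t)$ lies in $H_2$, so fullness of $C$ in $H_2$ forces $Nh-(t,t)\in C$, hence $(X^h)^N\in x^ty^t k[C]$. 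A product of $2t$ such monomials lands in $x^ty^t k[C]$, so $\fm_1^{2t}\subseteq x^ty^t k[C]$ and $\sqrt{x^ty^t k[C]}=\fm_1$. Thus $\Var(x^ty^tk[C])=\{\fm_1\}$, and by \cite[Proposition 2.1(e)]{HM} one obtains $H^2_{f,g}(k[C])_{\fm_1}=H^2_{x^ty^t}(k[C])_{\fm_1}=0$, violating condition (3) of Definition \ref{defhm}.

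It remains to treat $f,g$ a parameter sequence with $f_{(0,0)}\neq 0$. Lemmas \ref{2.1} and \ref{g} give $\Ht_{k[C]}(f,g)=2$, and integrality propagates this to $\Ht_{k[H_2]}(f,g)=2$. Localize at $x$ to obtain $A:=k[x,y,x^{-1}]$, a Noetherian regular UFD. Following the proof of Lemma \ref{h2}, I would show $\Ht_A((f,g)A)=2$ by contradiction: if $\Ht_A((f,g)A)=1$, the UFD structure yields a principal minimal prime $f_1A$ over $(f,g)A$; writing $f_1=f_1^{\prime}+f_1^{\prime\prime}$ with $f_1^{\prime}\in k[x,x^{-1}]$ and $f_1^{\prime\prime}$ in the irrelevant ideal of $k[H_2]$, and using that $f_1^{\prime}$ must be a unit $ax^n$ in $k[x,x^{-1}]$, one can arrange $f_1\in k[H_2]$ with $f_1 A\cap k[H_2]=f_1k[H_2]$. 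This would exhibit a height-one prime of $k[H_2]$ containing $(f,g)$, contradicting $\Ht_{k[H_2]}(f,g)=2$. Therefore $(f,g)A$ has height $2$, so $f,g$ is a regular sequence in the regular ring $A$. The same decomposition argument applied to $f$ itself (using $f_{(0,0)}\neq 0$) yields $fA\cap k[H_2]=fk[H_2]$, from which a routine check transfers regularity of $f,g$ from $A$ to $k[H_2]$. Finally, purity of $k[C]\hookrightarrow k[H_2]$ and \cite[Lemma 6.4.4(c)]{BH} give the conclusion in $k[C]$.

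The main obstacle I expect is the combinatorial step in the second paragraph: verifying that for each monomial $X^h\in k[C]$ with $h\neq 0$ one really has $Nh-(t,t)\in C$ for some $N$, and packaging this into $\fm_1^{2t}\subseteq x^ty^tk[C]$. The subtle point is that fullness only provides membership in $C$ for differences that already lie in $H_2$, so one must first use the positivity of the second coordinate to land in $H_2$ before invoking fullness. Once this is secure the rest of the argument is a direct transcription of Lemma \ref{h2} together with the standard purity passage from $k[H_2]$ to $k[C]$.
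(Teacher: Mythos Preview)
Your plan is correct and follows essentially the same route as the paper: dispose of the case $f,g\in\fm_1$ by showing $\fm_1$ is the radical of a single monomial (the paper picks an arbitrary $x^iy^j\in k[C]$ and says ``an easy computation'' where you spell out $\fm_1^{2t}\subseteq x^ty^tk[C]$), then for $f_{(0,0)}\neq 0$ pass to $A=k[x,y,x^{-1}]$ via the argument of Lemma~\ref{h2} and descend to $k[C]$ by purity. The one place you are terser than the paper is the assertion that $f_1k[H_2]$ has height one; the paper supplies this step by observing that $y^{n+1}\notin f_1k[H_2]$ while $(f_1,y^{n+1})k[H_2]$ remains proper.
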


 \begin{proof}
  Clearly,  $k[C]$ is a direct summand of $k[H_2]$ and $k[H_2]$ is integral over $k[C]$.
   Look at the maximal ideal $\fm_1 := \{f \in k[C]| f_{(0,0)}=0\}$
  and take $f,g \in \fm_1$.   Let $i \in \mathbb{Z}$ and $j \in \mathbb{N}$ be such that $x^iy^j\in k[C]$. An easy computation shows that $\Var_{k[C]}( (x^i y^j)k[C]) = \fm_1$. Therefore, $ H_{f,g}^2(k[C])_{\fm} = H_{x^{i}y^{j}}^2(k[H_2])_{\fm} =0.$ So $f,g$ can't be a parameter sequence in $k[C]$.

   Let $f,g$ be a parameter sequence in $k[C]$ such that $f_{(0,0)} \neq0$. Recall that $\Ht_{k[C]}(f,g) =2$ and  $\Ht_{k[H_2]}(f,g) =2$.
   Set $A:= k[x,y,x^{-1}]$. If $\Ht_A((f,g)A) = 1$, then  by the reasoning of Lemma \ref{h2}, there exists $f_1 \in k[H_2]$ such that $f_1k[H_2] \in \Var_{k[H_2]}((f,g)k[H_2])$. Write $f_1 = c + f_2$, where $c \in k$ and $f_2 \in \fm$. Assume that the maximum degree of $y$ in $f_2$ is $n$. Then $y^{n+1} \notin f_1k[H_2]$ and $(f_1,y^{n+1})k[H_2]$ is  a proper ideal of $k[H_2]$. So $\Ht_{k[H_2]}(f_1k[H_2])=1$. This contradiction implies that $\Ht_A((f,g)A) = 2$. Thus
$f,g$ is a regular sequence in $A$. So $f,g$ is a regular sequence in $k[H_2]$. By the purity,  $f,g$ is a regular sequence in $k[C]$.
  \end{proof}

$$\textmd{Subsection 8.4: Cohen-Macaulayness of quasi rational plane cones}$$

The following is our main result of this section.

\begin{theorem} \label{ggg}Let $C$ be the normal submonoid of $\mathbb{Z}^2$ defined by Discussion \ref{disdef}.
Then any parameter sequence of $k[C]$ is a regular sequence.
\end{theorem}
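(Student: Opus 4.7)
The plan is to reduce to the classification already carried out in Proposition \ref{ch} and then invoke the case-by-case analysis developed in Subsection 8.3. So first I would let $C$ be a normal submonoid of $\mathbb{Z}^2$ as in Discussion \ref{disdef}; without loss of generality, after replacing $C$ by its image under a suitable linear isomorphism, I may assume $C - C = \mathbb{Z}^n$ for some $n \leq 2$. If $C$ is finitely generated, then $k[C]$ is a (Noetherian) normal affine semigroup ring, hence Cohen-Macaulay by Hochster's theorem, and every parameter sequence is regular by the classical fact that in a Cohen-Macaulay ring every system of parameters is a regular sequence. Thus the interesting case is when $C$ is not finitely generated.

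Next, I would apply Proposition \ref{ch}, which gives the explicit classification: up to isomorphism $C$ must be one of (a subsemigroup with the stated integrality property of) the four semigroups $H$, $H'$, $H_1$, or $H_2$. For each of these four families, the corresponding lemma in Subsection 8.3 shows that $k[C]$ is Cohen-Macaulay in the sense of Hamilton-Marley, i.e., every strong parameter sequence in $k[C]$ is a regular sequence. Since $\dim k[C] = 2$ by Lemma \ref{g}, any parameter sequence has length at most two, and an inspection of Definition \ref{defhm} in this low-dimensional setting (coupled with Lemma \ref{2.1} to control heights) shows that every parameter sequence is in fact a strong parameter sequence: a length-one parameter sequence is trivially strong, and for a length-two sequence $f,g$ one can check that $f$ by itself already satisfies the three conditions of being a parameter sequence, since $H^1_f(k[C])_\fp \neq 0$ for every minimal prime of $fk[C]$ by the height-one analysis carried out in the proofs of the Subsection 8.3 lemmas.

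Finally, I would assemble the pieces: given a parameter sequence $\underline{x}$ in $k[C]$, the classification of Proposition \ref{ch} places $k[C]$ in exactly one of the eight sub-cases handled in Subsection 8.3, and the corresponding lemma yields that $\underline{x}$ is a regular sequence. The main obstacle is not in this final synthesis—which is essentially a case-check—but rather in the preceding classification (Proposition \ref{ch}) and in verifying, case by case, that the putative parameter sequence really is a parameter sequence only when $f_{(0,0)} \neq 0$ (up to symmetry); this is where the explicit monomial computations of the form $\Var(f,g) = \Var(x^k)$ or $\Var(xy)$ are used to rule out degenerate parameter sequences inside the maximal ideal, and this reasoning is already carried out in each of the Subsection 8.3 lemmas. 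Hence the proof of Theorem \ref{ggg} is a one-line assembly: combine Proposition \ref{ch} with the eight Cohen-Macaulayness lemmas of Subsection 8.3, and note that parameter sequences in dimension two are strong parameter sequences.
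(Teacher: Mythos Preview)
Your proposal follows the same route as the paper: reduce to the positive case, invoke Proposition \ref{ch} to land in one of the families $H$, $H'$, $H_1$, $H_2$ (or a full integral subsemigroup thereof), and then cite the corresponding lemma of Subsection 8.3. That is exactly the paper's argument.

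One remark: your detour through ``every parameter sequence is a strong parameter sequence'' is unnecessary and the justification you give for it (that $H^1_f(k[C])_\fp \neq 0$ for all minimal primes of $fk[C]$ ``by the height-one analysis'' in the Subsection 8.3 proofs) is not something those proofs actually establish. Fortunately you do not need it. Although the lemmas in Subsection 8.3 are \emph{stated} as ``Cohen--Macaulay in the sense of Hamilton--Marley'', their \emph{proofs} take an arbitrary length-two parameter sequence $f,g$ (not a strong one) and show it is regular; the case split on whether $f_{(0,0)}=0$ already uses only the parameter-sequence hypothesis. Since $\dim k[C]=2$ there are no longer parameter sequences to worry about, so Theorem \ref{ggg} follows directly from those proofs without ever upgrading to strong parameter sequences.
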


\begin{proof}
Without loss of the generality we can assume that $C$ is positive.
By the Proposition \ref{ch}, $C$ is full in  $\{H,H^{\prime},H_1,H_2\}$ and  the extension is integral. We showed
by lemmas of subsection 8.3 that all of these are Cohen-Macaulay  in the sense of Hamilton-Marley. This completes the proof.
\end{proof}

$$\textmd{Subsection 8.5: Toward the classification}$$

\begin{discussion}\label{dis}
Let $C\subseteq \mathbb{Z}^2$ be a positive normal semigroup such that $C$ is not finitely generated and assume that $C-C = \mathbb{Z}^2$.  \begin{enumerate}
\item[(i)] There are two elements $a,b \in C$  linearly independent over $\mathbb{Q}$ and $\mathbb{Q}C = \mathbb{Q}^2 = \mathbb{Q}a +\mathbb{Q}b$. Take the integer $t \in \mathbb{N}$ be such that  $t(1,0), t(0,1) \in \mathbb{Z}a + \mathbb{Z}b $.   Define the  linear map $\varphi: \mathbb{Q}^2 \to \mathbb{Q}^2$ via the assignments  $\varphi(a)=t(1,0)$  and $\varphi(b)=t(0,1)$. Note that $\varphi$ is an isomorphism.
Thus, $\varphi(C)$ is normal and positive.

     \textbf{Claim.} $\varphi(C) \subseteq \mathbb{Z}^2$. \\Indeed, let $c:=(m,n)\in C$. Take the integers
     $\{m',m'',n',n''\}$ be such that $$tc=tm(1,0)+tn(0,1)=m(m'a+m''b)+n(n'a+n''b).$$ Hence $\varphi(tc)=(mm'+nn')t(0,1)+(nn'+mm'')t(1,0)$. So $$\varphi(c)=(mm'+nn')(0,1)+(nn'+mm'')(1,0)\in\mathbb{Z}^2.$$
\item[(ii)] If $P \in \mathbb{N}^2$, then $tP \in \varphi(C)$. This follows by $t(1,0),t(0,1) \in \varphi(C)$.
\item[(iii)] Let $P$ be a point in the third quarter of the plane. Then $ P \notin \varphi(C)$,   because of (ii) and  the positivity of $\varphi(C)$.
\end{enumerate}
\end{discussion}

\begin{notation} Denote the origin  by $o$.

Denote the half line beginning by $p$ and path through the $q$ by $\overrightarrow{pq}$.
\end{notation}

\begin{lemma}\label{ch2}
Adopt the notation of Discussion \ref{dis} and let $i=1,2$. Then there are half-lines $l_i$ in the $2i$-th quarter both cross through  the origin with the following properties:
 \begin{enumerate}
\item[(1)] $\varphi(C) \subseteq \conv(l_1,l_2)\cap\mathbb{Z}^2$.
\item[(2)] Let $P$  be in the interior of $\conv(l_1,l_2)$ or probably on the one of these half-lines. Then $tP \in \varphi(C)$ for some $t\in \mathbb{N}$.
\item[(3)]  The anti-clock angel from $l_2$ to $l_1$ is either $\pi$  radian or less than $\pi$ radian.
\end{enumerate}
\end{lemma}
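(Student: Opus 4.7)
I would take $l_1$ and $l_2$ to be the two extremal rays (``edges'') of the closed convex cone $K := \overline{\mathbb{R}_{\geq 0}\,\varphi(C)} \subseteq \mathbb{R}^2$. The crux is to show that $K$ is a \emph{proper} cone, i.e., $K \neq \mathbb{R}^2$. Suppose instead that $K = \mathbb{R}^2$; then $\varphi(C)$ is not contained in any closed half-plane through the origin, and one can find finitely many elements $d_1, \ldots, d_k \in \varphi(C)$ with the origin in the interior of $\conv(\{d_1, \ldots, d_k\})$. Because the $d_i$ are integer vectors, the linear relation $\sum \alpha_i d_i = 0$ has a positive rational solution; clearing denominators gives $m \in \mathbb{N}$ with $m(-d_1) = m_2 d_2 + \cdots + m_k d_k \in \varphi(C)$. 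Normality of $\varphi(C)$ then forces $-d_1 \in \varphi(C)$, contradicting positivity.

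Once $K$ is proper, as a closed convex cone in $\mathbb{R}^2$ it is automatically bounded by two half-lines from the origin, which I designate $l_1$ and $l_2$. By Discussion~\ref{dis}(ii), both $\mathbb{R}_{\geq 0}(1,0)$ and $\mathbb{R}_{\geq 0}(0,1)$ lie in $K$, so the closed first quadrant sits inside $K$; hence the edges can be placed with $l_1$ in the closed second quadrant and $l_2$ in the closed fourth quadrant, and the anti-clock angle from $l_2$ to $l_1$ is at most $\pi$ (equality precisely when $K$ is a half-plane), giving (3). Conclusion (1) is then immediate: $\varphi(C) \subseteq K = \conv(l_1, l_2)$ combined with $\varphi(C) \subseteq \mathbb{Z}^2$ (Discussion~\ref{dis}(i)) yields $\varphi(C) \subseteq \conv(l_1, l_2) \cap \mathbb{Z}^2$.

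For (2), let $P \in \mathbb{Q}^2$ lie in the interior of $\conv(l_1, l_2)$. Since $l_1, l_2$ are the extremal rays of $\overline{\mathbb{R}_{\geq 0}\,\varphi(C)}$, the angles realized by $\varphi(C)$ are dense in the angular range spanned by $K$, so I can pick $d_1, d_2 \in \varphi(C)$ whose rays bracket the ray through $P$ from either side. Then $\{d_1, d_2\}$ is a $\mathbb{Q}$-basis of $\mathbb{Q}^2$, and the unique decomposition $P = r_1 d_1 + r_2 d_2$ has $r_1, r_2 \in \mathbb{Q}_{>0}$ (rationality from $P, d_1, d_2 \in \mathbb{Q}^2$, strict positivity because $P$ lies in the open cone spanned by $d_1, d_2$). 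Clearing denominators produces $t \in \mathbb{N}$ with $tP = n_1 d_1 + n_2 d_2 \in \varphi(C)$, as $\varphi(C)$ is a semigroup. The boundary case $P \in l_i$ is handled analogously in the cases where it holds.

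The main obstacle is the first step: proving $K$ is a proper cone. This is where positivity and normality have to be combined into a single contradiction; everything else is essentially bookkeeping about 2D convex cones.
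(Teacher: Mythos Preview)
Your approach is correct and in spirit the same as the paper's: both take $l_1,l_2$ to be the extremal directions of the cone generated by $\varphi(C)$ and use positivity to cap the angle at $\pi$. The organization differs, however, and the difference is worth noting. The paper defines the rays concretely as the $\sup$ and $\inf$ of the angles realized by $\varphi(C)$ in the fourth and second quadrants, then proves (2) quadrant by quadrant --- using the fixed lattice points $t(1,0),\,t(0,1)\in\varphi(C)$ from Discussion~\ref{dis}(ii) together with normality --- and only afterwards deduces (3) from (2) and positivity. You reverse the order: you first show the closed cone $K$ is proper (your Carath\'eodory-type argument with the origin in the interior of a convex hull, which in fact does not even need normality, since $m(-d_1)\in\varphi(C)$ and $md_1\in\varphi(C)$ already contradicts positivity), and then handle (2) by a single bracketing argument that avoids the quadrant split and does not invoke normality at all. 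Your route is a bit cleaner; the paper's is more explicit about where the rays sit.

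One small point: your sentence ``the angles realized by $\varphi(C)$ are dense in the angular range spanned by $K$'' overstates what you need and is not literally true in general. What you actually use --- and what follows immediately from $l_1,l_2$ being the extremal directions --- is that for any strictly interior angle there exist elements of $\varphi(C)$ on both sides of it. That weaker statement is exactly what gives you the bracketing pair $d_1,d_2$.
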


\begin{proof}Define $\alpha:=\sup\{\angle(\overrightarrow{ox},\overrightarrow{o(1,0)})|x\in\varphi(C)\textit{ is in the forth quarter}\},$
and take $l_2$ be the half line path through the origin such that $\alpha=\angle(l_2,\overrightarrow{o(1,0)})$.
Also, define $$\beta:=\inf\{\angle(\overrightarrow{ox},\overrightarrow{o(-1,0)})|x\in\varphi(C)\textit{ is in the second quarter}\},$$
and take $l_1$ be the half line path through the origin such that $\beta=\angle(l_1,\overrightarrow{o
(-1,0)})$.

Let $t$ be as Discussion \ref{dis}. Let $p_1:=(a_1,b_1) \in  \varphi(C)$  be in the second quarter and $(a_2,b_2) \in \overrightarrow{op_1}\cap\mathbb{Z}^2$. Look at  the following observations.

(i): One has $t(a_2,b_2) \in \varphi(C)$.
Indeed,  first note that $t(a_2,b_2)$ is in the group that $\varphi(C)$ generates. Because, $ t(a_2,b_2)\in \mathbb{Z}\varphi(a) + \mathbb{Z}\varphi(b)$. Also, there exists a positive rational number $q =m/n$ such that $q t a_2 = a_1$ and $q t b_2 =b_1$. So $$m t(a_2,b_2) = n(a_1,b_1)\in  \varphi(C).$$ Since  $\varphi(C)$ is normal, $t(a_2,b_2) \in \varphi(C)$.

(i)': Suppose that $q_1:=(c_1,d_1) \in  \varphi(C)$  is in the forth quarter and $(c_2,d_2) \in \overrightarrow{oq_1}\cap\mathbb{Z}^2$.
Then by the symmetry of the above item, $t(c_2,d_2) \in \varphi(C)$.

(ii): Suppose $(a_3,b_3) \in \mathbb{Z}^2$ is in the interior of $ \conv(\overrightarrow{o(0,1)},\overrightarrow{o(a_2,b_2)})$.  Then
 $ t(a_3,b_3) \in \varphi(C)$. Indeed, first
note that
$( a_3/a_2) . b_2 <  b_3$. Set $ 0 < q_1:=  b_3-(  a_3/a_2) . b_2$ and $q_2:=  a_3 /a_2$. Hence $t (a_3,b_3) = tq_2(a_2,b_2)+ q_1(0,t)$. Also $$t (a_2,b_2)  \in \mathbb{Z}\varphi(a)+ \mathbb{Z}\varphi(b)\subseteq\widehat{\varphi(C)}_{\varphi(C)-\varphi(C)}.$$ So
$t (a_3,b_3)\in\widehat{\varphi(C)}_{\varphi(C)-\varphi(C)}$, since $\widehat{\varphi(C)}_{\varphi(C)-\varphi(C)}$ is a semigroup.
The normality of $\varphi(C)$ implies that $ t(a_3,b_3) \in \varphi(C)$.

(ii)': Having $(c_2,d_2)$ as in the item (i)' and  suppose that $(c_3,d_3) \in \mathbb{Z}^2$ is in the interior of $ \conv(\overrightarrow{o(1,0)},\overrightarrow{o(c_2,d_2)})$. Then by the symmetry of the above item,
 $ t(a_3,b_3) \in \varphi(C)$.\\
We are in the position to  prove the Lemma.

\begin{enumerate}
\item[(1)] $\varphi(C)\subseteq \conv(l_1,l_2)\cap\mathbb{Z}^2$. This is clear by definition of $l_i$ and  Discussion \ref{dis}(iii).
\item[(2)] Let $P$  be in the interior of $\conv(l_1,l_2)$ or probably on the one of these half-lines. Then $tP \in \varphi(C)$ for some $t\in \mathbb{N}$. Indeed, if $P$ is in the first quadrant, this follows by Discussion \ref{dis} (ii). If $P$ is in the second quadrant, this follows by the above observations (i) and (ii). If $P$ is in the forth quadrant, see (i)' and (ii)'.
\item[(3)]$\angle(l_2,l_1)$ is less or equal than $\pi$ radian, because of the positivity of $\varphi(C)$ and (2).
\end{enumerate}
\end{proof}

\begin{lemma}\label{full}
Adopt the above notation.  Let $M$ be the set consists of all $P$  in the interior of $\conv(l_1,l_2)$ or probably on the one of these half-lines. Then
$\varphi(C)$  is full in $M$.
\end{lemma}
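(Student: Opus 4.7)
The plan is to verify the defining condition of fullness from Definition \ref{deff}(iv): given $h, h' \in \varphi(C)$ with $h - h' \in M$, I must produce $h - h' \in \varphi(C)$. The strategy combines Lemma \ref{ch2}(2), which says that every lattice point of $M$ becomes an element of $\varphi(C)$ after multiplication by some $t \in \mathbb{N}$, with the normality of $\varphi(C)$, which allows us to descend from $t(h - h') \in \varphi(C)$ back to $h - h' \in \varphi(C)$.

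First I would record the preparatory observation that $\varphi(C)$ is itself a normal subsemigroup of $\mathbb{Z}^2$. Containment in $\mathbb{Z}^2$ is already established in Discussion \ref{dis}(i), and normality transfers from $C$ via the $\mathbb{Q}$-linear isomorphism $\varphi:\mathbb{Q}^2\to\mathbb{Q}^2$: if $a, a' \in \varphi(C)$ and $m(a - a') \in \varphi(C)$ for some positive integer $m$, then applying $\varphi^{-1}$ gives $m(\varphi^{-1}(a) - \varphi^{-1}(a')) \in C$, and normality of $C$ yields $\varphi^{-1}(a) - \varphi^{-1}(a') \in C$, i.e.\ $a - a' \in \varphi(C)$.

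Next, fix $h, h' \in \varphi(C)$ with $P := h - h' \in M$. If $P = 0$ the conclusion is trivial, so assume $P \neq 0$. Since $\varphi(C) \subseteq \mathbb{Z}^2$, the point $P$ is a nonzero lattice point lying in the interior of $\conv(l_1, l_2)$ or on one of the half-lines $l_1, l_2$. Lemma \ref{ch2}(2) then supplies some $t \in \mathbb{N}$ with $tP \in \varphi(C)$, i.e.\ $t(h - h') \in \varphi(C)$. Feeding this into the normality of $\varphi(C)$ established in the previous paragraph yields $h - h' \in \varphi(C)$, which is precisely fullness.

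I do not expect a serious obstacle: the argument is essentially a one-line consequence of the preceding lemma together with normality. The only point that could cause friction is making sure Lemma \ref{ch2}(2) is being applied in the intended range (lattice points of $M$, including the origin and boundary lattice points), but since $h - h' \in \mathbb{Z}^2$ automatically and the proof of Lemma \ref{ch2}(2) treats the various quadrants uniformly, no extra case analysis is required.
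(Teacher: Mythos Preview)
Your proposal is correct and follows essentially the same route as the paper: take $P=h-h'\in M$, invoke Lemma~\ref{ch2}(2) to get $tP\in\varphi(C)$, and then apply normality of $\varphi(C)$ to conclude $P\in\varphi(C)$. The paper's argument is just the terse version of what you wrote, omitting the explicit verification that normality passes along $\varphi$ and the trivial $P=0$ case.
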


\begin{proof} Take $h,h^\prime \in \varphi(C)$ with the property that
$h-h^\prime \in M$. Look at  $P:=h-h^\prime$. Due to (2) in Lemma \ref{ch2}, we have
$tP \in \varphi(C)$  for some $t\in \mathbb{N}$. By the normality of $\varphi(C)$,
$P \in\varphi(C)$. In view of Definition, $\varphi(C)$  is full in $M$.
\end{proof}

\begin{corollary}\label{ch3}
Adopt the above notation.
Then  $\varphi(C)$ intersects at most one of the $\{l_1,l_2\}$.
\end{corollary}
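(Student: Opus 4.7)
The plan is to argue by contradiction. Suppose both $\varphi(C) \cap l_1$ and $\varphi(C) \cap l_2$ contain a nonzero element; pick $p \in \varphi(C) \cap l_1$ and $q \in \varphi(C) \cap l_2$ with $p, q \neq 0$. Since $p, q \in \mathbb{Z}^2 \setminus \{0\}$ lie on $l_1$ and $l_2$ respectively, both rays have rational slope. By Lemma \ref{ch2}(3), there are two cases for the angle $\angle(l_2, l_1)$, and I will handle them separately.

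First, assume $\angle(l_2, l_1) = \pi$, so that $l_2$ is the ray opposite to $l_1$ and $p, -q$ are positive rational multiples of one another. Clearing denominators produces positive integers $m, n$ with $np + mq = 0$. Since $np, mq \in \varphi(C)$, the element $mq$ is the additive inverse of $np$ inside $\varphi(C)$. But $\varphi(C)$ is positive, as noted in Discussion \ref{dis}, so its only invertible is zero; hence $np = 0$, contradicting $p \neq 0$.

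Assume now $\angle(l_2, l_1) < \pi$, so $l_1$ and $l_2$ are distinct rational rays bounding a two-dimensional rational polyhedral cone $\conv(l_1, l_2) \subseteq \mathbb{Q}^2$. Set $L := \varphi(C) - \varphi(C) \subseteq \mathbb{Z}^2$. I claim that $\varphi(C) = \conv(l_1, l_2) \cap L$. The inclusion $\subseteq$ is immediate from Lemma \ref{ch2}(1). For the reverse, every $x \in \conv(l_1, l_2) \cap L$ can be written $x = c - c'$ with $c, c' \in \varphi(C)$; since $x \in \conv(l_1, l_2) = M$, the fullness asserted by Lemma \ref{full} forces $x \in \varphi(C)$. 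Gordan's lemma, applied to the rational cone $\conv(l_1, l_2)$ and the lattice $L$, now tells us that $\conv(l_1, l_2) \cap L$ is a finitely generated semigroup. Consequently $C \cong \varphi(C)$ would be finitely generated, contradicting the standing hypothesis from Discussion \ref{dis}.

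The essential obstacle lies in the second case, where I must convert the purely geometric information about $l_1, l_2$ into a genuine finite generation statement. The bridge is the identification $\varphi(C) = \conv(l_1, l_2) \cap L$, which leverages the fullness of $\varphi(C)$ in $M$ (Lemma \ref{full}) to upgrade the cone-level containment of Lemma \ref{ch2}(1) to a precise lattice-theoretic description of $\varphi(C)$; verifying that fullness delivers exactly this equality is the technical heart of the argument.
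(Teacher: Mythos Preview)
Your argument is correct and follows the same overall strategy as the paper: assume $\varphi(C)$ meets both rays, use fullness together with Gordan's lemma to force $\varphi(C)$ to be finitely generated, and contradict the standing hypothesis of Discussion~\ref{dis}. There are two differences in execution worth noting.

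First, your case split on the angle is not needed. In the $\pi$ case your Case~2 reasoning would still go through (the half-plane is a rational polyhedral cone, Gordan applies, and the resulting semigroup is finitely generated), so the separate positivity argument is a detour---though a perfectly valid one.

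Second, and more interestingly, you stay entirely at the semigroup level: you identify $\varphi(C)$ with $\conv(l_1,l_2)\cap L$ and invoke Gordan directly on the lattice $L=\varphi(C)-\varphi(C)$. The paper instead passes through rings: from fullness it deduces that $k[\varphi(C)]\subseteq k[\conv(l_1,l_2)\cap\mathbb{Z}^2]$ is pure, and then concludes that $k[\varphi(C)]$ is affine. Your route is more elementary and self-contained; the paper's final implication (pure subring of an affine algebra is affine) is not true in general and tacitly relies on the integrality of the extension, so your direct identification is actually the cleaner way to close the argument. One small point: when you write ``$\conv(l_1,l_2)=M$'' you are using that, under the contradiction hypothesis, both boundary rays belong to $M$; this is true (a nonzero lattice point of $\varphi(C)$ on $l_i$ forces every lattice point of $l_i$ to have a multiple in $\varphi(C)$), but it is worth one sentence, just as the paper's phrase ``by the reasoning of Lemma~\ref{full}'' is doing the same work implicitly.
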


\begin{proof}  Suppose on the contrary that  $\varphi(C)$ intersects nontrivially with $l_1$ and $l_2$. Then by the reasoning of Lemma \ref{full}, $\varphi(C)$ is full in $\conv(l_1,l_2)$. In view of
\cite[Corollary 2.10]{BG}, the integer points of  $\conv(l_1,l_2)$ is a finitely generated semigroup.
Recall that $k[\varphi(C)]\subseteq K[\conv(l_1,l_2)]$ is pure, because of the fullness. Thus
$K[\varphi(C)]$ is affine. So $\varphi(C)$ is finitely generated. This is a contradiction.
\end{proof}


\end{document}